\newtheoremstyle{myremark} 
    {7pt}                    
    {7pt}                    
    {}  	                 
    {}                           
    {\bf}       	         
    {.}                          
    {.5em}                       
    {}  
\theoremstyle{plain}
\newtheorem{lemma}{Lemma}[section]
\newtheorem{theorem}[lemma]{Theorem}
\newtheorem*{theorem-main}{Main Theorem}
\newtheorem{definition}[lemma]{Definition}
\newtheorem{corollary}[lemma]{Corollary}
\newtheorem{proposition}[lemma]{Proposition}
\newtheorem{conjecture}[lemma]{Conjecture}
\theoremstyle{definition}
\theoremstyle{myremark}
\newtheorem{remark}[lemma]{Remark}
\newtheorem{example}[lemma]{Example}
\newcommand{\N}{\mathbb{N}}
\newcommand{\R}{\mathbb{R}}
\newcommand{\Z}{\mathbb{Z}}
\newcommand{\cP}{\mathcal{P}}
\newcommand{\diam}{\mathrm{diam}}
\newcommand{\conv}{\mathrm{conv}}
\newcommand{\st}{\mathrm{st}}
\newcommand{\lip}{\mathrm{Lip}}
\newcommand{\nU}{\mathcal{U}}
\newcommand{\vr}[2]{\mathrm{VR}(#1;#2)}
\newcommand{\vrleq}[2]{\mathrm{VR}_\leq(#1;#2)}
\newcommand{\vrless}[2]{\mathrm{VR}_<(#1;#2)}
\newcommand{\vrm}[2]{\mathrm{VR}^m(#1;#2)}
\newcommand{\vrmleq}[2]{\mathrm{VR}^m_\leq(#1;#2)}
\newcommand{\vrmless}[2]{\mathrm{VR}^m_<(#1;#2)}
\newcommand{\cechleq}[2]{\mathrm{\check{C}_\leq}(#1;#2)}
\newcommand{\cechless}[2]{\mathrm{\check{C}_<}(#1;#2)}
\newcommand{\cech}[2]{\mathrm{\check{C}}(#1;#2)}
\newcommand{\cechm}[2]{\mathrm{\check{C}}^m(#1;#2)}
\newcommand{\cechmless}[2]{\mathrm{\check{C}}^m_<(#1;#2)}
\newcommand{\id}{\mathrm{id}}
\newcommand{\so}{\mathrm{SO}}
\newcommand{\gh}{\mathrm{GH}}
\newcommand{\h}{\mathrm{H}}
\newcommand{\ph}{\mathrm{PH}}
\begin{document}

\title{Metric reconstruction via optimal transport}
\author{Micha{\l} Adamaszek}\email{aszek@mimuw.edu.pl}
\address{Department of Mathematics, University of Copenhagen, Universitetsparken 5,2100 Copenhagen, Denmark}
\author{Henry Adams}\email{adams@math.colostate.edu}
\address{Department of Mathematics, Colorado State University, Fort Collins, CO 80523, United States}
\author{Florian Frick}\email{frick@cmu.edu}
\address{Department of Mathematical Sciences, Carnegie Mellon University, Pittsburgh, PA 15213, USA}
\thanks{MA was supported by VILLUM FONDEN through the network for Experimental Mathematics in Number Theory, Operator Algebras, and Topology. The research of MA and HA was supported through the program ``Research in Pairs" by the Mathematisches Forschungsinstitut Oberwolfach in 2015.}
\keywords{Metric thickening, Vietoris--Rips complexes, Wasserstein metric, Karcher mean, Homotopy type}
\subjclass[2010]{
53C23, 
54E35, 
55P10, 
55U10
 }

\begin{abstract}
\small
Given a sample of points $X$ in a metric space $M$ and a scale~$r>0$, the Vietoris--Rips simplicial complex~$\vr{X}{r}$ is a standard construction to attempt to recover $M$ from $X$ up to homotopy type. A deficiency of this approach is that $\vr{X}{r}$ is not metrizable if it is not locally finite, and thus does not recover metric information about~$M$. We attempt to remedy this shortcoming by defining a metric space thickening of~$X$, which we call the \emph{Vietoris--Rips thickening}~$\vrm{X}{r}$, via the theory of optimal transport. When $M$ is a complete Riemannian manifold, or alternatively a compact Hadamard space, we show that the the Vietoris--Rips thickening satisfies Hausmann's theorem ($\vrm{M}{r}\simeq M$ for $r$ sufficiently small) with a simpler proof: homotopy equivalence $\vrm{M}{r}\to M$ is canonically defined as a center of mass map, and its homotopy inverse is the (now continuous) inclusion map $M\hookrightarrow\vrm{M}{r}$. Furthermore, we describe the homotopy type of the Vietoris--Rips thickening of the $n$-sphere at the first positive scale parameter $r$ where the homotopy type changes.
\end{abstract}

\maketitle

\section{Introduction}

Let $X$ be a set of points sampled from a metric space $M$; the only information we retain about $M$ is its metric restricted to~${X\times X}$. In general it is impossible to reconstruct the homotopy type of $M$ from this data, even if the set $X$ is sufficiently dense (say an $\varepsilon$-net). A remarkable theorem of Latschev~\cite{Latschev2001} states that if $M$ is a closed Riemannian manifold and $X$ is sufficiently close to $M$ in the Gromov--Hausdorff distance, then one can recover the homotopy type of~$M$. Indeed, consider the Vietoris--Rips complex~$\vr{X}{r}$, which has as its simplices the finite subsets of $X$ of diameter less than~${r>0}$. Latschev proves that for certain small values of~$r$, complex $\vr{X}{r}$ is homotopy equivalent to~$M$. The Vietoris--Rips complex was introduced independently by Vietoris who defined a homology theory for compact metric spaces (\cite{Vietoris27} and~\cite[VII \S5]{lefschetz1942algebraic}), and by Rips who showed that torsion-free hyperbolic groups have finite Eilenberg--MacLane spaces~\cite[Theorem~III 3.21]{bridson2011metric}.

More recently, Vietoris--Rips complexes have been a commonly used tool in computational topology and persistent homology~\cite{EdelsbrunnerHarer}. If $M$ is a manifold, what properties of $M$ can one recover when given only a finite data set $X$ noisily sampled from $M$? Latschev's theorem motivates the construction of the Vietoris--Rips complex $\vr{X}{r}$ as a proxy for the homotopy type of $M$, but unfortunately one does not know how to choose $r$ appropriately without knowledge of the (unknown) curvature of $M$. In practice, computational topologists instead let the scale $r$ vary from small to large, and compute the \emph{persistent homology} of $\vr{X}{r}$ (i.e.\ the homology of $\vr{X}{r}$ as $r$ changes) to get a multiresolution summary of the data. The topological perspective has aided the analysis of data arising from image processing~\cite{carlsson2008local}, conformation spaces of molecules~\cite{martin2010topology,zomorodian2012topological}, branched polymers~\cite{macpherson2012measuring}, and sensor networks~\cite{de2007coverage}, for example. Helpful expository introductions to persistent homology include~\cite{Carlsson2009} and~\cite{ghrist2008barcodes}.

Though we started with a metric space~$X$, the Vietoris--Rips complex $\vr{X}{r}$ does not come equipped with a natural choice of metric. Indeed, $\vr{X}{r}$ is not metrizable if it is not locally finite, meaning it is impossible to equip $\vr{X}{r}$ with a metric without changing the homeomorphism type. We use optimal transport to build a family of metric spaces $\vrm{X}{r}$ -- the \emph{Vietoris--Rips thickenings} -- from the knowledge of pairwise distances in~$X$. In these metric spaces we can take abstract convex combinations of points in $X$ whenever they are at distance less than~$r$ from one another, just as in $\vr{X}{r}$. If $\vr{X}{r}$ is not locally finite then necessarily $\vrm{X}{r}$ has a different (metrizable) topology. Furthermore, if $M$ is a closed Riemannian manifold and $X$ is Gromov--Hausdorff close to~$M$, then $\vrm{X}{r}$ is not only homotopy equivalent to~$M$ (for appropriate small~$r$), but also Gromov--Hausdorff close to~$M$ (Lemma~\ref{lem:thickening}). We prove the following (see Lemma~\ref{lem:thickening}, Corollary~\ref{cor:homeo}, Property~\ref{prop:weak-equiv}, Theorem~\ref{thm:metric-Hausmann}):

\begin{theorem-main}
Let $X$ be a metric space and $r>0$.
\begin{enumerate}
\item Metric space $\vrm{X}{r}$ is an $r$-thickening of~$X$; in particular the Gromov--Hausdorff distance between $X$ and $\vrm{X}{r}$ is at most~$r$.
\item If $\vr{X}{r}$ is locally finite, then $\vrm{X}{r}$ is homeomorphic to~$\vr{X}{r}$.
\item If $X$ is discrete, then $\vrm{X}{r}$ is homotopy equivalent to~$\vr{X}{r}$.
\item If $M$ is a complete Riemannian manifold with curvature bounded from above and below, then $\vrm{M}{r}$ is homotopy equivalent to $M$ for $r$ sufficiently small. 
\end{enumerate}
\end{theorem-main}

In the restricted setting where $X$ is discrete, item (1) is stated by Gromov~\cite[1.B(c)]{Gromov}. Similar properties are discussed when $X$ is a length space (and $\vrm{X}{r}$ is called a \emph{polyhedral regularization of $X$}) by Burago, Burago, Ivanov in~\cite[Example~3.2.9]{BuragoBuragoIvanov}. The proof of Item (3) relies on the nerve lemma, and a much more general statement is given in Remark~\ref{rem:nerve}. Item (4) is an analogue of Hausmann's theorem~\cite[Theorem~3.5]{Hausmann1995} for Vietoris--Rips thickenings, and holds also for compact Hadamard spaces (Remark~\ref{rem:Hausmann}). Whereas Hausmann's homotopy equivalence $\vr{M}{r}\to M$ relies on the choice of a total ordering of all points in~$M$, our homotopy equivalence $\vrm{M}{r}\to M$ is now canonically defined using Karcher or Fr{\'e}chet means. Furthermore, the homotopy inverse of our map is the (now continuous) inclusion $M\hookrightarrow\vrm{M}{r}$. We prove that the compositions are homotopy equivalent to the corresponding identity maps by using linear homotopies.

\subsection*{Motivation}

We provide the following motivation for our work. In applications of topology to data~\cite{Carlsson2009}, one is given a sampling of points $X$ from an unknown underlying space $M$ and would like to use $X$ to recover information of $M$. There are a variety of theoretical guarantees~\cite{AFV,AttaliLieutier,ChazalOudot2008,Latschev2001,niyogi2008finding} showing how Vietoris--Rips complexes and related constructions built on a sufficiently dense sampling $X$ can be used to recover information such as the homology groups and homotopy types of $M$. However, since $M$ is unknown one does not know if the assumptions needed for these results (such as having the Vietoris--Rips scale $r$ be sufficiently small depending on the curvature of $M$) are satisfied. In practical applications, one often allows the scale $r$ to vary from small to large and computes the persistent homology of $\vr{X}{r}$. This is reasonable because as $X$ converges to $M$ in the Gromov--Hausdorff distance, the persistent homology of $\vr{X}{r}$ converges to that of $\vr{M}{r}$~\cite{ChazalDeSilvaOudot2013}. However, very little is known about the limiting object, the persistent homology of $\vr{M}{r}$, even when $M$ is a manifold. Indeed, to our knowledge the only connected non-contractible manifold $M$ for which the persistent homology of $\vr{M}{r}$ is known at all scales $r$ is the circle~\cite{AA-VRS1}.\footnote{And easy consequences thereof, such as when manifold $M$ is an annulus or torus with a particular metric~\cite[Section~10]{AA-VRS1}.} When $M$ is an infinite metric space, we believe that the metric thickening $\vrm{M}{r}$ is in several ways a more natural object than the simplicial complex $\vr{M}{r}$. As evidence for this claim, in Section~\ref{sec:spheres} we describe the first new homotopy type of Vietoris--Rips thickenings $\vrm{S^n}{r}$ of higher-dimensional spheres as scale $r$ increases. We conjecture (Conjecture~\ref{conj:PH}) that these homotopy types are closely related to the (unknown) homotopy types of Vietoris--Rips complexes $\vr{S^n}{r}$ of higher-dimensional spheres, which determine what the persistent homology of a dataset $X$ will converge to when $X$ is a denser and denser sampling of a sphere.

We would like to clarify the relationship between using either the Vietoris--Rips complex $\vr{X}{r}$ or the Vietoris--Rips thickening $\vrm{X}{r}$ in applications of topology to data analysis. For $X$ finite, the persistent homology barcodes for $\vr{X}{r}$ and for $\vrm{X}{r}$ coincide (Corollary~\ref{cor:PH}). It is known that for $X$ infinite, the persistent homology intervals for $\vr{X}{r}$ and $\vrm{X}{r}$ can differ at endpoints (open endpoints versus closed endpoints, or vice-versa; see Remark~\ref{rem:PH}). It is not known whether the two persistent homology intervals are identical after ignoring endpoints, although we conjecture this to be the case (Conjecture~\ref{conj:PH}).\footnote{And in the unexpected event that the persistent homology intervals for $\vr{X}{r}$ and $\vrm{X}{r}$ can differ drastically, then it is not clear which one should be the primary object of interest.} When $X$ is infinite, it is difficult to determine the persistent homology of either $\vr{X}{r}$ or $\vrm{X}{r}$. A disadvantage of the thickening is that Latschev's theorem (\cite{Latschev2001}) is known only for $X$ finite (Corollary~\ref{cor:metric-Latschev}), although it is conjectured also for $X$ arbitrary (Conjecture~\ref{conj:metric-Latschev}). A second disadvantage of the thickening is that the stability of persistent homology (\cite[Theorem~5.2]{ChazalDeSilvaOudot2013}) is known only for $X$ and $Y$ finite (Corollary~\ref{cor:stability}), although it is conjectured also for $X$ and $Y$ arbitrary (Conjecture~\ref{conj:stability}). Advantages of the thickening over the complex are that the thickening is always a metric space, the proof of Hausmann's theorem is more natural, Hausmann's theorem extends also to compact Hadamard spaces and to Euclidean submanifolds~\cite{AM}, there are example spaces whose Vietoris--Rips complexes have uncountable homology but whose thickenings have finite homology (Appendix~\ref{app:Droz}), and furthermore we are able to determine the homotopy types of Vietoris--Rips thickenings of higher-dimensional spheres at larger scale parameters (Theorem~\ref{thm:S^n-critical}). It is not yet clear whether $\vr{X}{r}$ or $\vrm{X}{r}$ should be the primary object of study (despite our preferences for the latter), but understanding either space improves one's understanding of the other.

\subsection*{Organization}

In Section~\ref{sec:preliminaries} we introduce our primary tool, the Wasserstein or Kantorovich metric on Radon measures of an arbitrary metric space; see Edwards and Kellerer~\cite{edwards2011kantorovich,kellerer1984duality,kellerer1982duality}. We define Vietoris--Rips thickenings in Section~\ref{sec:vrm} and give some of their basic properties. Section~\ref{sec:Hausmann-nerve} proves metric analogues of Hausmann's theorem and the nerve lemma. In Section~\ref{sec:spheres} we leverage this new metric viewpoint to determine the homotopy types of~$\vrm{S^n}{r}$, for all spheres $S^n$, at the first positive scale parameter $r$ where the homotopy type changes. In Section~\ref{sec:maps} we study maps between simplicial complexes and thickenings, including metric analogues of Latschev's theorem and the stability of persistent homology.

Ivan Marin in~\cite{marin2017measure} studies related constructions which produce geometric classifying spaces for a topological group that is furthermore metrizable.

\section{Preliminaries}\label{sec:preliminaries}

\subsection*{Topological spaces}

We write $Y\simeq Z$ for homotopy equivalent topological spaces $Y$ and~$Z$. Given a topological space $Y$ and a subset $Z\subseteq Y$, let $\overline{Z}$ denote the closure of $Z$ in~$Y$. We denote the $n$-dimensional sphere by $S^n$ and the closed $n$-dimensional ball by $D^n$. Given a topological space~$Y$, let $C(Y)$ be its cone---for example $C(S^n)=D^{n+1}$. Furthermore, let $\Sigma\ Y$ be the suspension of~$Y$, and let $\Sigma^i\ Y$ be the $i$-fold suspension of~$Y$. The join of two topological spaces $Y$ and $Z$ is denoted~${Y*Z}$.

\subsection*{Metric spaces}

Given a metric space $(X,d)$, a point $x\in X$, and a real number $r\ge0$, we let $B_X(x,r)=\{y\in X~|~d(y,x)<r\}$ (or $B(x,r)$ when the ambient space~$X$ is clear from context) denote the open ball of radius $r$ about~$x$. We let $d_\gh(X,Y)$ denote the Gromov-Hausdorff distance between metric spaces $X$ and $Y$. 

An \emph{$r$-thickening} of a metric space $X$, as defined in~\cite[1.B]{Gromov}, is a larger metric space $Z\supseteq X$ such that
\begin{itemize}
\item the distance function on $Z$ extends that on $X$, and
\item $d(z,X)\le r$ for all $z\in Z$.
\end{itemize}
If $Z$ is an $r$-thickening of $X$ it follows that $d_\gh(X,Z)\le r$.

\subsection*{Simplicial complexes}

Let $K$ be a simplicial complex; we do not notationally distinguish between an abstract simplicial complex and its geometric realization. We let $V(K)$ denote the vertex set of $K$. If $V'\subseteq V(K)$, then we let $K[V']$ denote the induced simplicial complex on vertex set $V'$.

\subsection*{Vietoris--Rips and \v Cech complexes}

Let $X$ be a metric space and let $r\ge 0$. The Vietoris--Rips simplicial complex $\vrless{X}{r}$ (resp.\ $\vrleq{X}{r}$) has $X$ as its vertex set, and $\{x_0,\ldots,x_k\}\subseteq X$ as a simplex whenever $\diam(\{x_0,\ldots,x_k\})<r$ (resp.\ $\diam(\{x_0,\ldots,x_k\})\le r$). The \v Cech simplicial complex $\cechless{X}{r}$ (resp.\ $\cechleq{X}{r}$) has $X$ as its vertex set, and $\{x_0,\ldots,x_k\}\subseteq X$ as a simplex whenever $\bigcap_{i=0}^k B_X(x_i,r)\neq\emptyset$ (resp.\ $\bigcap_{i=0}^k \overline{B_X(x_i,r)}\neq\emptyset$). We write $\vr{X}{r}$ or $\cech{X}{r}$ when a statement is true for either choice of inequality, $<$ or $\leq$, applied consistently throughout.

\subsection*{The Wasserstein or Kantorovich metric}\label{ss:Wasserstein}

All of the statements in this subsection follow from~\cite{edwards2011kantorovich,kellerer1984duality,kellerer1982duality}, and we mainly use the notation from~\cite{edwards2011kantorovich}. Let $(X,d)$ be an arbitrary metric space. A measure $\mu$ defined on the Borel sets of $X$ is a \emph{Radon measure} if it is inner regular, i.e.\ $\mu(B)=\sup\{\mu(K)~|~K\subseteq B\mbox{ is compact}\}$ for all Borel sets~$B$, and if it is locally finite, i.e.\ every point $x\in X$ has a neighborhood $U$ such that $\mu(U)<\infty$. Let $\cP(X)$ denote the set of probability Radon measures such that for some (and hence all) $y\in X$, we have $\int_X d(x,y)\ d\mu<\infty$.

Define a metric on $X\times X$ by setting the distance between $(x_1,x_2),(x'_1,x'_2)\in X\times X$ to be $d(x_1,x'_1)+d(x_2,x'_2)$. Given $\mu,\nu\in\cP(X)$, let $\Pi(\mu,\nu)\subseteq\cP(X\times X)$ be the set of all probability Radon measures $\pi$ on $X\times X$ such that $\mu(E)=\pi(E\times X)$ and $\nu(E)=\pi(X\times E)$ for all Borel subsets $E\subseteq X$. 
\begin{definition}\label{def:Wasserstein}
The 1-Wasserstein metric on $\cP(X)$ is defined by
\[ d_{\cP(X)}(\mu,\nu)=\inf_{\pi\in\Pi(\mu,\nu)}\int_{X\times X}d(x,y)\ d\pi. \]
In what follows we denote the metric $d_{\cP(X)}\colon\cP(X)\times\cP(X)\to\R$ by $d\colon\cP(X)\times\cP(X)\to\R$, as it extends the metric $d\colon X\times X\to\R$.
\end{definition}

The infimum in this definition is attained (see~\cite[page~388]{edwards2011kantorovich}, and also~\cite{kellerer1984duality,kellerer1982duality}). This metric, which gives a solution to the Monge-Kantorovich problem, has many names: the Kantorovich, Wasserstein, optimal transport, or earth mover's metric; see Vershik's survey~\cite{vershik2013long}.

A generalization of the Kantorovich--Rubinstein theorem for arbitrary (possibly non-compact) metric spaces, proven in~\cite{edwards2011kantorovich,kellerer1984duality,kellerer1982duality}, states that the 1-Wasserstein metric satisfies
\begin{equation}\label{eq:dual}
d(\mu,\nu)=\sup\Biggl\{\int_X f(x)\ d(\mu-\nu)~\Big|~f\colon X\to\R,\ \lip(f)\le1\Biggr\}.
\end{equation}
Here $\lip(f)$ denotes the Lipschitz constant of~$f$. As a consequence, Definition~\ref{def:Wasserstein} indeed defines a metric on~$\cP(X)$.

Given a point $x\in X$, let $\delta_x\in\cP(X)$ be the Dirac probability measure with mass one at $x$. The map $X\to\cP(X)$ defined by $x\mapsto\delta_x$ is an isometry onto its image: for all points $x,y\in X$, we have
\begin{equation}\label{eq:isometry}
d(\delta_x,\delta_y)=d(x,y).
\end{equation}
In~\cite[page~99]{villani2008optimal}, Villani writes ``Wasserstein distances incorporate a lot of the geometry of the space. For instance, the mapping $x\mapsto\delta_x$ is an \emph{isometric} embedding \ldots but there are much deeper links." Our Theorems~\ref{thm:metric-Hausmann} and \ref{thm:metric-nerve} are evidence that the Wasserstein distances, at small scales, incorporate not only geometry but also homotopy information.

\section{The Vietoris--Rips thickening}\label{sec:vrm}

Consider points $x_0,\ldots,x_k \in X$. Whenever we write $\mu=\sum_{i=0}^k \lambda_i \delta_{x_i}$, we assume that $\lambda_i \ge 0$ for all $i$ and $\sum_i \lambda_i = 1$; hence $\mu$ is a probability measure. If we require each $\lambda_i>0$, then this representation is unique. Measure $\mu$ is locally finite (as a probability measure it is in fact finite), and it follows that $\mu$ is a Radon measure. Furthermore, note that $\mu\in\cP(X)$ since for any $y\in X$, we have $\int_X d(x,y)\ d\mu=\sum_{i=0}^k\lambda_id(x_i,y)<\infty$.

\begin{definition}\label{def:vrm}
Let $X$ be a metric space and $r\ge0$. The {\em Vietoris--Rips thickening} is the following submetric space of $\cP(X)$, equipped with the restriction of the 1-Wasserstein metric:
\begin{align*}
\vrmless{X}{r}&=\Biggl\{\sum_{i=0}^k \lambda_i\delta_{x_i}\in \cP(X)~\Big|~k\ge1,\ \lambda_i\ge0,\ \sum\lambda_i=1,\ \diam(\{x_0,\ldots,x_k\})< r\Biggr\}\quad\mbox{or}\\
\vrmleq{X}{r}&=\Biggl\{\sum_{i=0}^k \lambda_i\delta_{x_i}\in \cP(X)~\Big|~k\ge1,\ \lambda_i\ge0,\ \sum\lambda_i=1,\ \diam(\{x_0,\ldots,x_k\})\leq r\Biggr\}.
\end{align*}
\end{definition}
\noindent We include a superscript $m$ to denote ``metric." By convention $\vrmless{X}{0}$ is the empty set, and $\vrmleq{X}{0}$ is equal to $X$ as a metric space.

Thus the Vietoris--Rips thickening of a metric space~$X$ is the space of all (abstract) convex combinations of nearby points in~$X$ with the Wasserstein metric. A qualitative difference to the usual Vietoris--Rips complex is that the natural embedding $X \to \vrm{X}{r}$ is now a continuous map (and even an isometric embedding). In fact, we will naturally consider $X$ to be a subset of $\vrm{X}{r}$ and write $\sum_{i=0}^k \lambda_ix_i$ for a point in $\vrm{X}{r}$ instead of $\sum_{i=0}^k \lambda_i\delta_{x_i}$.

Given two points $\mu=\sum_{i=0}^k \lambda_ix_i$ and $\mu'=\sum_{j=0}^{k'} \lambda'_jx'_j$ in $\vrm{X}{r}$, note that an element $\pi\in\Pi(x,x')$ can be written as
\[ \pi=\sum_{\substack{1\le i\le k\\1\le j\le k'}} \pi_{i,j}\delta_{x_i,x'_j}\quad\mbox{with}\quad \pi_{i,j}\ge0,\quad\sum_j \pi_{i,j}=\lambda_i,\quad\mbox{and}\quad\sum_i \pi_{i,j}=\lambda'_j. \]
The cost of a matching in this finite setting is $\int_{X\times X}d(x,y)\ d\pi = \sum_{i,j} \pi_{i,j}d(x_i,x'_j)$.
One can analogously define a $p$-Wasserstein metric on $\vrm{X}{r}$ for any $1\le p\le\infty$.

\begin{remark}
The Vietoris--Rips thickening $\vrm{X}{r}$ need not be homeomorphic to its corresponding complex $\vr{X}{r}$. Indeed, as a simplicial complex $\vr{X}{r}$ is metrizable if and only if it is locally finite; see Sakai~\cite[Proposition~4.2.16(2)]{sakai2013geometric}. In other words, if $\vr{X}{r}$ is not locally finite, then it is impossible to equip it with any metric without changing the homeomorphism type.
\end{remark}

\begin{remark}
The Vietoris--Rips thickening $\vrmleq{X}{r}$ need not be homotopy equivalent to~$\vrleq{X}{r}$. Indeed, when $r=0$ note $\vrmleq{X}{0}$ is equal to $X$ as a metric space, whereas $\vrleq{X}{0}$ is equipped with the discrete topology. A less trivial example is that $\vrmleq{S^1}{\frac{1}{3}}\simeq S^3$ (Remark~\ref{rem:S1}), whereas $\vrleq{S^1}{\frac{1}{3}}\simeq\bigvee^{\mathfrak{c}}S^2$; see~\cite{AA-VRS1}. Appendix~\ref{app:Droz} gives other examples where $\vrleq{X}{r}$ has uncountable homology but $\vrmleq{X}{r}$ does not. We leave it as an open question whether with the $<$ convention $\vrmless{X}{r}$ and $\vrless{X}{r}$ are homotopy equivalent, though this is known to be true in certain cases (Remark~\ref{rem:nerve}, Property~\ref{prop:weak-equiv}).
\end{remark}

\begin{remark}
The metric thickening $\vrm{X}{r}$ is rarely complete or compact, except when $X$ is finite or discrete. Indeed, see Section~\ref{sec:not-compact}. This situation could be improved by considering a different space, those measures $\mu$ whose support has diameter at most $r$, with no requirement that the support be finite. Theorem~6.18 of~\cite{villani2008optimal} implies that if $X$ is Polish (complete and separable), then this space of measures of bounded support is Polish (indeed, it is a closed subset of a Polish space). Similarly,~\cite[Remark~6.19]{villani2008optimal} implies that this space of measures of bounded support is compact if $X$ is. Furthermore, for $X$ compact a metric version of Hausmann's theorem would still be true for this space of measures of bounded support, as outlined in Remark~\ref{rem:Hausmann}. We instead consider the Vietoris--Rips thickening $\vrm{X}{r}$ in which all measures have finite support, as it is more directly related to the Vietoris--Rips complex (in which all simplices are finite). Nevertheless, the space of measures of bounded support (without an assumption of finiteness) is natural to consider. 
\end{remark}

The Vietoris--Rips thickening is only one example of a more general construction.

\begin{definition}\label{def:complex-thickening}
Let $X$ be a metric space and let $K$ be a simplicial complex with vertex set $V(K)=X$. The \emph{metric thickening $K^m$} is the following submetric space of $\cP(X)$, equipped with the restriction of the 1-Wasserstein metric:
\[ K^m=\Biggl\{\sum_{i=0}^k \lambda_i\delta_{x_i}\in \cP(X)~\Big|~k\ge1,\ \lambda_i\ge0,\ \sum\lambda_i=1,\ \{x_0,\ldots,x_k\}\in K\Biggr\}. \]
\end{definition}

Examples include not only Vietoris--Rips thickenings $\vrm{X}{r}=(\vr{X}{r})^m$, but also \v Cech thickenings $\cechm{X}{r}=(\cech{X}{r})^m$, alpha thickenings, and witness thickenings, etc. Alpha simplicial complexes are defined for example in~\cite{EdelsbrunnerHarer}, and witness complexes in~\cite{DeSilvaCarlsson,ChazalDeSilvaOudot2013}. As before, we write $\sum_{i=0}^k \lambda_ix_i\in K^m$ instead of $\sum_{i=0}^k \lambda_i\delta_{x_i}$.

The following lemma  is stated by Gromov (\cite[1.B(c)]{Gromov}) in the specific case where $X$ is discrete.

\begin{lemma}\label{lem:thickening}
If $K^m$ is a metric thickening of metric space $X$ such that each simplex of $K$ has diameter at most $r\ge0$, then $K^m$ is an $r$-thickening of~$X$.
\end{lemma}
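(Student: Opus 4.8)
The plan is to verify the two defining conditions of an $r$-thickening directly from the definitions. First I would check that the inclusion $X \hookrightarrow K^m$, sending $x \mapsto \delta_x$, is isometric, so that the metric on $K^m$ extends the metric on $X$; this is exactly Equation~\eqref{eq:isometry}, combined with the observation that $\delta_x \in K^m$ for every $x \in X$ because the $0$-simplex $\{x\}$ is in $K$ (or, in the degenerate situations covered by the conventions, the statement is immediate). So the first bullet in the definition of $r$-thickening is free.

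The substance is the second bullet: I must show $d(\mu, X) \le r$ for every $\mu \in K^m$. Given $\mu = \sum_{i=0}^k \lambda_i \delta_{x_i}$ with $\{x_0,\ldots,x_k\}$ a simplex of $K$, I would simply bound $d(\mu, \delta_{x_0})$ (using any fixed vertex of the supporting simplex, say $x_0$). The natural transport plan is $\pi = \sum_{i=0}^k \lambda_i \delta_{(x_i, x_0)} \in \Pi(\mu, \delta_{x_0})$, whose cost is $\sum_{i=0}^k \lambda_i d(x_i, x_0)$. Since $\{x_0,\ldots,x_k\}$ is a simplex of $K$, its diameter is at most $r$ by hypothesis, so each $d(x_i,x_0) \le r$, and therefore $d(\mu,\delta_{x_0}) \le \sum_i \lambda_i d(x_i,x_0) \le r \sum_i \lambda_i = r$. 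Hence $d(\mu, X) \le d(\mu, \delta_{x_0}) \le r$.

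Combining these two points shows $K^m$ is an $r$-thickening of $X$ in the sense recalled in Section~\ref{sec:preliminaries}, and the parenthetical consequence $d_\gh(X, K^m) \le r$ then follows from the general fact (also stated there) that any $r$-thickening is within Gromov--Hausdorff distance $r$ of the original space.

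\textbf{Main obstacle.} There is essentially no hard step here: the only thing to be slightly careful about is that $\Pi(\mu,\nu)$ is defined using Radon probability measures on $X \times X$, so I should confirm that the finitely-supported plan $\pi = \sum_i \lambda_i \delta_{(x_i,x_0)}$ genuinely lies in $\cP(X\times X)$ and has the correct marginals --- both of which are routine, exactly as in the discussion preceding Definition~\ref{def:vrm} of how elements of $\Pi$ look in the finite setting. One should also dispatch the boundary conventions ($r = 0$, or $K$ consisting only of vertices) to be sure the claim is literally true there, but these are immediate from the stated conventions on $\vrm{X}{0}$ and from $\delta_x \mapsto x$ being an isometry.
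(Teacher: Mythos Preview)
Your proposal is correct and follows essentially the same approach as the paper's own proof: invoke~\eqref{eq:isometry} for the first condition, then bound $d(\mu,X)$ by $d(\mu,x_0)=\sum_i\lambda_i d(x_i,x_0)\le r$ for the second. The only cosmetic difference is that you spell out the transport plan explicitly, whereas the paper simply writes the equality $d(\sum_i\lambda_i x_i,x_0)=\sum_i\lambda_i d(x_i,x_0)$ directly (which is in fact an equality, not just an inequality, since the coupling is forced when one marginal is a Dirac mass).
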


\begin{proof}
It is clear by \eqref{eq:isometry} that the Wasserstein metric on $K^m$ extends the metric on $X$. Next, consider any point $\sum_i \lambda_ix_i$ in~$K^m$. Note
\[ d\Bigl(\sum_i \lambda_ix_i,X\Bigr)\le d\Bigl(\sum_i \lambda_ix_i,x_0\Bigr)=\sum_i \lambda_i d(x_i,x_0)\le r\sum_i \lambda_i=r.\]
\end{proof}

We may therefore say $K^m$ is a \emph{metric $r$-thickening} if each simplex of $K$ has diameter at most $r$. Note $\vrm{X}{r}$ is a metric $r$-thickening and $\cechm{X}{r}$ is a metric $2r$-thickening.

We briefly relate metric thickenings to configuration spaces. Let $X$ be a metric space and $K$ a simplicial complex on vertex set $X$. Denote the \emph{$n$-skeleton} of~$K^m$ by $S_n(K^m) = \{\sum_{i=0}^k \lambda_i x_i \in K^m~|~ k\le n\}$, and denote the configuration space of $n+1$ unordered points in $X$ by~$C_{n+1}X$. If $K$ is the maximal simplicial complex on vertex set~$X$, then we have $S_n(K^m)\setminus S_{n-1}(K^m)\simeq C_{n+1}(X)$ where a deformation retraction is obtained by collapsing each open simplex to its barycenter. A related space is $\exp_{n+1} X$, the set of all finite subsets of $X$ of cardinality at most $n+1$, as studied by Tuffley in~\cite{tuffley2002finite}.

\subsection{Basic properties}

We describe a basic result on the continuity of maps between metric thickenings induced from maps on the underlying metric spaces, and a result on the continuity of linear homotopies on metric thickenings.

Let $X$, $Y$ be metric spaces and $K$, $L$ be simplicial complexes with vertex sets $V(K)=X$, $V(L)=Y$. Note that if $f\colon X\to Y$ has the property that $f(\sigma)$ is a simplex in $L$ for each simplex $\sigma\in K$, then the induced map $\tilde{f} \colon K^m \to L^m$ defined by $\sum_i \lambda_ix_i \mapsto \sum_i \lambda_if(x_i)$ exists. For example, if $f\colon X\to Y$ has the property that $d(x,x')\le r$ implies $d(f(x),f(x'))\le r$, then the induced map $\tilde{f} \colon \vrmleq{X}{r} \to \vrmleq{Y}{r}$ exists.

\begin{lemma}\label{lem:map-lipschitz}
Let $X$, $Y$ be metric spaces and $K$, $L$ be simplicial complexes with vertex sets $V(K)=X$, $V(L)=Y$. Let $f\colon X\to Y$ be a map of metric spaces such that the induced map $\tilde{f} \colon K^m \to L^m$ on metric thickenings exists. If $f$ is $c$-Lipschitz, then so is~$\tilde{f}$.
\end{lemma}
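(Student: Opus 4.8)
The plan is to prove the Lipschitz estimate directly from the definition of the Wasserstein metric by transporting optimal couplings forward along $f$. Concretely, fix two points $\mu=\sum_i \lambda_i x_i$ and $\mu'=\sum_j \lambda'_j x'_j$ in $K^m$, so that $\tilde f(\mu)=\sum_i \lambda_i f(x_i)$ and $\tilde f(\mu')=\sum_j \lambda'_j f(x'_j)$ lie in $L^m$ (this uses the hypothesis that $\tilde f$ exists). Because both measures have finite support, the infimum in Definition~\ref{def:Wasserstein} is attained by a finite coupling $\pi=\sum_{i,j}\pi_{i,j}\delta_{x_i,x'_j}$ with marginals $\lambda_i$ and $\lambda'_j$, as recorded in the discussion following Definition~\ref{def:vrm}.

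The key step is the observation that the pushforward $(f\times f)_*\pi = \sum_{i,j}\pi_{i,j}\delta_{f(x_i),f(x'_j)}$ is a valid element of $\Pi(\tilde f(\mu),\tilde f(\mu'))$: its first marginal assigns mass $\sum_j \pi_{i,j}=\lambda_i$ to $f(x_i)$ and similarly for the second marginal, so it is an admissible transport plan between $\tilde f(\mu)$ and $\tilde f(\mu')$ (one should be slightly careful if $f$ is not injective, in which case masses at coinciding images simply add, which is harmless). Therefore
\[
d(\tilde f(\mu),\tilde f(\mu')) \le \sum_{i,j}\pi_{i,j}\, d(f(x_i),f(x'_j)) \le c\sum_{i,j}\pi_{i,j}\, d(x_i,x'_j) = c\, d(\mu,\mu'),
\]
where the middle inequality is exactly the $c$-Lipschitz hypothesis on $f$ applied termwise, and the final equality is because $\pi$ was chosen to be an optimal coupling realizing $d(\mu,\mu')$. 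Since $\mu,\mu'$ were arbitrary, $\tilde f$ is $c$-Lipschitz.

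I do not expect a serious obstacle here; the only point requiring a little care is the bookkeeping when $f$ identifies two vertices $x_i$ and $x_{i'}$ (or $x'_j$ and $x'_{j'}$), since then the expression $\sum_i \lambda_i f(x_i)$ must be re-collected into a sum over distinct points before it is literally an element of $\cP(X)$ in the form of Definition~\ref{def:complex-thickening}; but this is a notational reorganization that does not affect the coupling or the cost, and one could alternatively just work with the pushforward measures abstractly and note that the cost functional $\int d\,d\pi$ is insensitive to how one labels atoms. An even slicker route, which I would mention as an alternative, is to use the Kantorovich--Rubinstein duality \eqref{eq:dual}: for any $1$-Lipschitz $g\colon Y\to\R$, the composite $g\circ f$ is $c$-Lipschitz, hence $\frac1c(g\circ f)$ is $1$-Lipschitz on $X$, and $\int_X \frac1c(g\circ f)\,d(\mu-\mu') = \frac1c\int_Y g\, d(\tilde f_*\mu - \tilde f_*\mu')$; taking the supremum over $g$ gives $d(\tilde f(\mu),\tilde f(\mu'))\le c\,d(\mu,\mu')$ immediately, sidestepping the labeling issue entirely. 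Either argument is short; I would present the coupling argument as the main proof since it is the most elementary and self-contained.
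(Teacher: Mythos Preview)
Your proof is correct and follows essentially the same approach as the paper: take a (near-)optimal coupling $\pi_{i,j}$ between $\mu$ and $\mu'$, observe that the same weights give a coupling between $\tilde f(\mu)$ and $\tilde f(\mu')$, and apply the $c$-Lipschitz bound termwise. The paper phrases this in $\varepsilon$--$\delta$ language rather than invoking attainment of the infimum, and it does not spell out the pushforward or the non-injectivity bookkeeping, but the argument is the same; your duality alternative is a nice addition not in the paper.
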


\begin{proof}
Let $\varepsilon>0$. Since $f$ is $c$-Lipschitz, we have $d(f(x),f(x'))\le c d(x,x')$ for all $x,x'\in X$. Now suppose $d(\sum \lambda_ix_i,\sum \lambda'_jx'_j)\leq\frac{\varepsilon}{c}$. This means there is some $\pi_{i,j}\ge0$ with $\sum \pi_{i,j}=1$, $\sum_j \pi_{i,j}=\lambda_i$, $\sum_i \pi_{i,j}=\lambda'_j$, and $\sum \pi_{i,j}d(x_i,x'_j)\leq\frac{\varepsilon}{c}$. It follows that
\begin{align*}d\bigl(\tilde{f}(\sum \lambda_ix_i),\tilde{f}(\sum \lambda'_jx'_j)\bigr)=d\bigl(\sum \lambda_if(x_i),\sum \lambda'_jf(x'_j)\bigr)&\le\sum_{i,j} \pi_{i,j}d(f(x_i),f(x'_j))\\
&\le c\sum_{i,j}\pi_{i,j} d(x_i,x'_j)\leq\varepsilon,
\end{align*}
and hence $\tilde{f}$ is $c$-Lipschitz.
\end{proof}

\begin{remark}\label{rem:not-cont}
Note that $f$ continuous need not imply that $\tilde{f}$ is continuous. Indeed, consider
\[ X=\{(0,0)\}\cup\{(\tfrac{1}{n},0)~|~n\in\N\}\cup\{(\tfrac{1}{n},1)~|~n\in\N\}\subseteq\R^2. \]
Define the continuous map $f\colon X\to\R$ by
\[ f(x,y)=\begin{cases}
0&\mbox{if }y=0\\
n^2&\mbox{if }(x,y)=(\frac{1}{n},1).
\end{cases} \]
Let $K=\vr{X}{\infty}$ and $L=\vr{\R}{\infty}$ be the maximal simplicial complexes on their vertex sets. Note that $\mu_n=\frac{n-1}{n}\delta_{(\frac{1}{n},0)}+\frac{1}{n}\delta_{(\frac{1}{n},1)}$ is a sequence in $K^m$ converging to $\delta_{(0,0)}$, but that $\tilde{f}(\mu_n)=\frac{n-1}{n}\delta_{0}+\frac{1}{n}\delta_{n^2}$ is not Cauchy (and hence not convergent) in~$L^m$.
\end{remark}

A metric thickening $K^m$ abstractly is the space of convex combinations of points in~$X$, and taking convex combinations (wherever defined) is a continuous operation in the Wasserstein metric. This is made rigorous in the following lemma.

\begin{lemma}\label{lem:homotopy-cont}
Suppose $K^m$ is a metric $r$-thickening and $f\colon K^m\to K^m$ is a continuous map such that $H\colon K^m\times [0,1]\to K^m$ given by $H(\mu,t)=(1-t)\mu+tf(\mu)$ is well-defined. Then $H$ is continuous.
\end{lemma}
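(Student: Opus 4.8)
The plan is to establish continuity of $H$ at an arbitrary point $(\mu_0, t_0) \in K^m \times [0,1]$ by a direct $\varepsilon$–$\delta$ estimate in the Wasserstein metric, using the triangle inequality to split the distance $d(H(\mu,t), H(\mu_0,t_0))$ into a part controlled by the nearness of $t$ to $t_0$ and a part controlled by the nearness of $\mu$ to $\mu_0$ (the latter also invoking continuity of $f$). Concretely, I would first bound
\[
d(H(\mu,t), H(\mu_0,t_0)) \le d\bigl((1-t)\mu + tf(\mu),\ (1-t_0)\mu + t_0 f(\mu)\bigr) + d\bigl((1-t_0)\mu + t_0 f(\mu),\ (1-t_0)\mu_0 + t_0 f(\mu_0)\bigr).
\]

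For the first term, the measures $(1-t)\mu + tf(\mu)$ and $(1-t_0)\mu + t_0 f(\mu)$ differ only in how mass is split between $\mu$ and $f(\mu)$; one can transport the ``excess'' mass $|t - t_0|$ from the support of $\mu$ to the support of $f(\mu)$ (or vice versa), and since $K^m$ is a metric $r$-thickening both $\mu$ and $f(\mu)$ lie within distance $r$ of $X$, so all these supports are within a bounded distance of each other — more carefully, one builds an explicit coupling realizing cost at most $|t-t_0|\cdot D$ where $D$ bounds the relevant pairwise distances, giving a bound like $d((1-t)\mu+tf(\mu),(1-t_0)\mu+t_0f(\mu)) \le C|t-t_0|$. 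Here I need to be a little careful that $D$ can be taken uniform in a neighborhood of $\mu_0$: since $\mu_0$ and $f(\mu_0)$ are fixed measures with finite support, and $\mu$, $f(\mu)$ stay Wasserstein-close to them, the cost of matching $\mu$ to $f(\mu)$ stays bounded, say by $d(\mu_0, f(\mu_0)) + 1$ once $\mu$ is close enough to $\mu_0$ and (by continuity of $f$) $f(\mu)$ close enough to $f(\mu_0)$.

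For the second term, I would use the fact that forming a fixed convex combination of two measures is $1$-Lipschitz in each argument: if $\pi$ is an optimal coupling for $(\mu,\mu_0)$ and $\pi'$ an optimal coupling for $(f(\mu), f(\mu_0))$, then $(1-t_0)\pi + t_0\pi'$ is a valid coupling for the pair $((1-t_0)\mu + t_0 f(\mu),\ (1-t_0)\mu_0 + t_0 f(\mu_0))$, whence this distance is at most $(1-t_0)d(\mu,\mu_0) + t_0 d(f(\mu), f(\mu_0))$. Now given $\varepsilon>0$, continuity of $f$ at $\mu_0$ lets me choose $\delta_1$ so that $d(\mu,\mu_0)<\delta_1$ forces $d(f(\mu),f(\mu_0))<\varepsilon/3$; shrinking further to $\delta_2$ makes the first-term bound $C|t-t_0| < \varepsilon/3$ when $|t-t_0| < \delta_2$; and requiring $d(\mu,\mu_0) < \varepsilon/3$ as well handles the remaining piece. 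Combining gives $d(H(\mu,t),H(\mu_0,t_0)) < \varepsilon$.

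The main obstacle I anticipate is making the first-term estimate fully rigorous: one must actually exhibit a transport plan between $(1-t)\mu + tf(\mu)$ and $(1-t_0)\mu + tf(\mu)$... wait — between $(1-t)\mu+tf(\mu)$ and $(1-t_0)\mu+t_0f(\mu)$ — and verify its marginals and cost when $\mu$ and $f(\mu)$ may share points in their supports (so the ``excess mass'' bookkeeping must be done at the level of the coupling on $X \times X$, not naively on coefficient vectors). The clean way is: write $\mu = \sum_i a_i \delta_{y_i}$ and $f(\mu) = \sum_j b_j \delta_{z_j}$ over a common finite index set, note $(1-s)\mu + sf(\mu)$ has atoms with weights $(1-s)a_i$ at $y_i$ and $sb_j$ at $z_j$, and construct the coupling that keeps the common mass $\min\{(1-t)a_i,(1-t_0)a_i\}$ etc.\ on the diagonal and moves the difference $|t-t_0|a_i$ from $y_i$ to $z_j$ matched against $|t-t_0|b_j$; its cost is $|t-t_0|\sum_{i,j} a_i b_j d(y_i,z_j)$, and $\sum_{i,j} a_i b_j d(y_i,z_j)$ is exactly the cost of the product coupling between $\mu$ and $f(\mu)$, hence at least $d(\mu,f(\mu))$ but also finite and locally bounded as discussed. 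Once this coupling is written down the rest is the routine three-$\varepsilon$ argument above.
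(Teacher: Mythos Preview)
Your argument is correct in outline and shares the paper's high-level strategy: both reduce to continuity of the convex-combination map $L(\mu,\nu,t)=(1-t)\mu+t\nu$ and then precompose with $(\mu,t)\mapsto(\mu,f(\mu),t)$. The execution differs, however. You work on the primal side, constructing explicit couplings for each of the two pieces in your triangle-inequality split; the paper instead invokes the Kantorovich--Rubinstein duality~\eqref{eq:dual} to obtain in one stroke
\[
d\bigl(L(\mu,\nu,t),\,L(\mu',\nu',t')\bigr)\ \le\ \max\{d(\mu,\mu'),d(\nu,\nu')\}\ +\ |t'-t|\,d(\mu',\nu'),
\]
so that the ``constant'' multiplying $|t'-t|$ is simply the Wasserstein distance $d(\mu',\nu')$ rather than the product-coupling cost $\sum_{i,j}a_ib_jd(y_i,z_j)$ that you must separately control. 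On that last point your justification has a small slip: the product-coupling cost can be strictly larger than $d(\mu,f(\mu))$, so the bound ``$d(\mu_0,f(\mu_0))+1$'' you obtain for $d(\mu,f(\mu))$ via the triangle inequality does not by itself bound the product-coupling cost. An easy repair, which is where the $r$-thickening hypothesis actually enters, is to pick any $y_0$ in the support of $\mu$ and estimate
\[
\sum_{i,j}a_ib_j\,d(y_i,z_j)\ \le\ \sum_i a_i\,d(y_i,y_0)+\sum_j b_j\,d(y_0,z_j)\ =\ d(\mu,\delta_{y_0})+d(\delta_{y_0},f(\mu))\ \le\ 2r+d(\mu,f(\mu)),
\]
which is continuous in $\mu$ and hence locally bounded near $\mu_0$. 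With this fix your proof goes through; the paper's dual argument simply sidesteps the issue.
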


\begin{proof}
Note that $H$ is the composition of the continuous map $(\mu,t)\mapsto(\mu,f(\mu),t)$ with $L\colon K^m\times K^m\times I\to K^m$ defined by $L(\mu,\nu,t)=(1-t)\mu+t\nu$. For any map $g\colon X\to \R$ with Lipschitz constant at most one, and for any $(\mu,\nu,t),(\mu',\nu',t')\in K^m\times K^m\times I$, we have
\begin{align*}
&\int_X g(x)\ d((1-t)\mu+t\nu-(1-t')\mu'-t'\nu')\\
=&(1-t)\int_X g(x)\ d(\mu-\mu') + t\int_X g(x)\ d(\nu-\nu') + (t'-t)\int_X g(x)\ d(\mu'-\nu')\\
\le&\max\{d(\mu,\mu'),d(\nu,\nu')\} + |t'-t|d(\mu',\nu').
\end{align*}
Hence by \eqref{eq:dual} we have $d(L(t,\mu,\nu),L(t',\mu',\nu'))\le\max\{d(\mu,\mu'),d(\nu,\nu')\} + |t'-t|d(\mu',\nu')$, and so $L$ is continuous. It follows that $H$ is continuous.
\end{proof}

\section{Metric analogues of Hausmann's theorem and the nerve lemma}\label{sec:Hausmann-nerve}

In this section we prove metric analogues of Hausmann's theorem, the nerve lemma, and Latschev's theorem. Let $M$ be a Riemannian manifold. Hausmann's theorem~\cite[Theorem~3.5]{Hausmann1995} states that there exists some real number $r(M)$ sufficiently small (depending on the scalar curvature of~$M$) such that if $0<r\le r(M)$, then $\vrless{M}{r}$ is homotopy equivalent to~$M$. Hausmann's homotopy equivalence $T\colon\vrless{M}{r}\to M$ is extremely non-canonical; it depends on the choice of an arbitrary total ordering of all of the points in~$M$. Furthermore, if $M$ is not a discrete metric space then the inclusion $M\hookrightarrow\vrless{M}{r}$ is not continuous and hence cannot be a homotopy inverse. By contrast, we use Karcher means to define a map $g\colon\vrm{M}{r}\to M$ which is a homotopy equivalence for $r$ sufficiently small (Theorem~\ref{thm:metric-Hausmann}). A key feature of our proof is that we give a canonical choice for map $g$, which furthermore has the continuous inclusion $M\hookrightarrow\vrm{M}{r}$ as a homotopy inverse. A similar proof works also for the \v{C}ech thickening in Theorem~\ref{thm:metric-nerve}.

The following technology regarding Karcher means is from~\cite{Karcher1977}. Let the complete Riemannian manifold $M$ and real number $\rho>0$ satisfy the following conditions:
\begin{enumerate}
\item[(i)] For each $m \in M$, the geodesic ball $B(m,\rho)$ of radius $\rho$ about $m$ is convex, meaning the shortest geodesic between any two points in $B(m,\rho)$ is unique in $M$ and lies in~$B(m,\rho)$.
\item[(ii)] The manifold $M$ has sectional curvature bounds $\delta\le K\le\Delta$, where if $\Delta>0$ then we also assume $2\rho<\frac{1}{2}\pi\Delta^{-1/2}$.
\end{enumerate}
Given a probability measure $\mu$ with support contained in an open ball $B$ of $M$ of radius at most $\rho$, the function $P_\mu\colon\overline{B}\to\R$ defined by
\[ P_\mu(x)=\frac{1}{2}\int_{y\in M} d(x,y)^2\ d\mu \]
has a unique minimum in $B$ (see~\cite[Definition~1.3]{Karcher1977}). This minimizer is denoted $C_\mu$, and called the \emph{center of mass} or the \emph{Karcher mean}.
Furthermore,~\cite[Corollary~1.6]{Karcher1977} bounds the variation of the Karcher mean: if $\mu$ and $\nu$ are probability measures with support in an open ball $B$ of radius $\rho$, then
\begin{equation}\label{eq:cont}
d(C_{\mu},C_{\nu})\le(1+c(\delta,\Delta)(2\rho)^2)\int_{(x,y)\in M\times M} d(x,y)\ d(\mu\times\nu).
\end{equation}

Let $K$ a simplicial complex on vertex set $M$ such that the vertices of each simplex are contained in an open ball of radius $\rho$. Note that a point $\mu=\sum_{i=0}^k \lambda_i x_i$ in the metric thickening $K^m$ is a measure with support contained in an open ball of radius at most $\rho$.
Using the Karcher mean, we define a map $g\colon K^m\to M$ by setting $g(\mu)=C_\mu$.

\begin{lemma}\label{lem:g-cont}

Let $M$ be a complete Riemannian manifold and $\rho>0$ be a real number satisfying (i) and (ii). Suppose $K^m$ is a metric thickening of $M$ such that each simplex of $K$ has diameter at most $r<\rho$. Then the map $g\colon K^m\to M$ is continuous.
\end{lemma}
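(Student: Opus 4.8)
The plan is to prove that $g$ is continuous at each point $\mu_0\in K^m$ separately; since continuity is a local property this suffices, and a global Lipschitz bound is not to be expected because the Karcher estimate \eqref{eq:cont} only controls means of measures living in a common $\rho$-ball. Fix $\mu_0=\sum_{i=0}^k\lambda_ix_i$ with all $\lambda_i>0$, so that $\{x_0,\dots,x_k\}$ is the support of $\mu_0$ and has diameter at most $r$.

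The heart of the argument is a localization step: I claim that for every $\mu\in K^m$ with $d(\mu,\mu_0)$ sufficiently small, the supports of $\mu$ and of $\mu_0$ together lie in a single open geodesic ball of radius $\rho$. Set $\eta=(\rho-r)/2>0$ and suppose $d(\mu,\mu_0)<\eta$, writing $\mu=\sum_j\kappa_jx'_j$. Choosing a coupling $\pi$ between $\mu_0$ and $\mu$ of cost less than $\eta$, and using that the masses sum to $1$, there is a pair $(x_i,x'_j)$ appearing in $\pi$ with $d(x_i,x'_j)<\eta$. Since $\mathrm{supp}(\mu)$ has diameter at most $r$ and contains $x'_j$, every point of $\mathrm{supp}(\mu)$ lies in $\overline{B(x'_j,r)}\subseteq B(x_i,r+\eta)\subseteq B(x_i,\rho)$; and since $\mathrm{supp}(\mu_0)$ has diameter at most $r$ and contains $x_i$, it lies in $\overline{B(x_i,r)}\subseteq B(x_i,\rho)$ as well. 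Hence both supports sit in the open ball $B:=B(x_i,\rho)$ of radius $\rho$. The point worth emphasizing is that the covering ball must be centered at the support point of $\mu_0$ that is close to $\mathrm{supp}(\mu)$; centering it at an arbitrary support point would only yield radius $2r+\eta$, which need not be below $\rho$.

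With this in hand, $C_{\mu_0}$ and $C_\mu$ are Karcher means of probability measures supported in the common radius-$\rho$ ball $B$ (conditions (i), (ii) hold for $B$), so the variation estimate \eqref{eq:cont} applies and bounds $d(C_{\mu_0},C_\mu)$ by $(1+c(\delta,\Delta)(2\rho)^2)$ times the transport cost between $\mu_0$ and $\mu$; taking the coupling optimal, this cost equals $d(\mu_0,\mu)$. Thus $d(g(\mu_0),g(\mu))\le(1+c(\delta,\Delta)(2\rho)^2)\,d(\mu_0,\mu)$ whenever $d(\mu_0,\mu)<(\rho-r)/2$, which is an explicit (local Lipschitz) modulus of continuity at $\mu_0$ and completes the proof. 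I expect the localization step to be the only real obstacle — in particular the need to track which support point of $\mu_0$ is near $\mathrm{supp}(\mu)$ rather than using an arbitrary covering ball; after that the conclusion is a direct appeal to Karcher's estimate. One should also dispatch the harmless boundary cases (support diameters equal to $r$, giving closed rather than open balls; the center of mass being defined for supports in balls of radius at most $\rho$), but these cause no difficulty because $r<\rho$ strictly.
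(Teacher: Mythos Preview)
Your proof is correct and follows essentially the same route as the paper: localize the supports of two nearby measures into a common geodesic ball of radius $\rho$, then invoke Karcher's variation estimate~\eqref{eq:cont} to get a local Lipschitz bound. The paper's argument differs only cosmetically---it uses the slightly larger localization radius $\rho-r$ rather than your $(\rho-r)/2$ (since $d(x_i,x'_j)<\rho-r$ already gives $\mathrm{supp}(\mu)\subseteq B(x_i,(\rho-r)+r)=B(x_i,\rho)$ directly), but this has no bearing on the continuity conclusion.
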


\begin{proof}
Let $\mu=\sum_{i=0}^k \lambda_i x_i$ and $\mu'=\sum_{j=0}^{k'} \lambda'_j x'_j$. By the definition of $d(\mu,\mu')$ there exists some $\pi_{i,j}\ge0$ with $\sum_j \pi_{i,j}=\lambda_i$, $\sum_i \pi_{i,j}=\lambda'_j$, and $\sum_{i,j}\pi_{i,j}d(x_i,x'_j)= d(\mu,\mu')$.
Note that if $d(\mu,\mu')<\rho-r$, then there is at least one pair of vertices $x_i\in\{x_0,\ldots,x_k\}$ and $x'_j\in\{x'_0,\ldots,x'_{k'}\}$ with $d(x_i,x'_j)<\rho-r$. By the triangle inequality we have
\[\{x_0,\ldots,x_k\}\cup\{x'_0,\ldots,x'_{k'}\}\subseteq B(x_i,\rho).\]
Hence for all $\mu,\mu'$ with $d(\mu,\mu')<\rho-r$, we have
\begin{align*}
d(g(\mu),g(\mu'))&=d(C_\mu,C_{\mu'})\le(1+c(\delta,\Delta)(2\rho)^2)\int_{M\times M} d(x,y)\ d(\mu\times\mu')&&\mbox{by \eqref{eq:cont}}\\
&=\bigl(1+c(\delta,\Delta)(2\rho)^2\bigr)\sum_{i,j}\pi_{i,j}d(x_i,x'_j)\le(1+c(\delta,\Delta)(2\rho)^2)d(\mu,\mu').
\end{align*}
So the map $g\colon K^m\to M$ is continuous.
\end{proof}

We can now prove our main result: the Vietoris--Rips thickening of a complete Riemannian manifold~$M$ accurately recovers the homotopy type of~$M$ for sufficiently small distance parameter~$r$.

\begin{theorem}[Metric Hausmann's theorem]\label{thm:metric-Hausmann}
Let $M$ be a complete Riemannian manifold and $\rho>0$ be a real number satisfying (i) and~(ii). For $r<\rho$, the map $g\colon\vrm{M}{r}\to M$ is a homotopy equivalence.
\end{theorem}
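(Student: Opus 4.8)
The plan is to show that $g\colon\vrm{M}{r}\to M$ and the inclusion $\iota\colon M\hookrightarrow\vrm{M}{r}$ are mutually inverse homotopy equivalences. First I would record that both maps are continuous: $g$ is continuous by Lemma~\ref{lem:g-cont} (since $r<\rho$), and $\iota$ is continuous — indeed an isometric embedding — by \eqref{eq:isometry}. One composition is immediate: $g\circ\iota=\id_M$, because for a Dirac measure $\delta_x$ the function $P_{\delta_x}(y)=\tfrac12 d(x,y)^2$ is minimized uniquely at $x$, so $C_{\delta_x}=x$. The substance of the proof is therefore to show $\iota\circ g\simeq\id_{\vrm{M}{r}}$.

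For that I would use the linear (straight-line-in-Wasserstein) homotopy $H\colon\vrm{M}{r}\times[0,1]\to\vrm{M}{r}$ defined by $H(\mu,t)=(1-t)\mu+t\,\delta_{g(\mu)}$, which interpolates from $\mu=H(\mu,0)$ to $\iota(g(\mu))=H(\mu,1)$. The key point making this well-defined inside $\vrm{M}{r}$ is that for $\mu=\sum_i\lambda_i x_i$ with $\diam\{x_i\}<r$, the Karcher mean $C_\mu$ lies in the convex ball $B(x_0,\rho)$; in fact $C_\mu$ lies in the convex hull of the support in the strong sense that $\{x_0,\ldots,x_k,C_\mu\}$ still has diameter $\le r$ — this is where I expect the main obstacle to be, and it needs a genuine argument. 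The cleanest route is: $C_\mu$ is the minimizer of $P_\mu$, and $P_\mu$ attains at any vertex $x_j$ the value $\tfrac12\sum_i\lambda_i d(x_j,x_i)^2\le \tfrac12 r^2$ (using $d(x_j,x_i)\le\diam\{x_i\}\le r$ with the appropriate strict/non-strict convention), while convexity of $y\mapsto d(x,y)^2$ on the convex ball (condition (i), together with the curvature bound in (ii)) forces $C_\mu$ to satisfy $d(C_\mu,x_j)\le r$ for every $j$. Hence $\{x_0,\ldots,x_k,C_\mu\}$ has diameter $\le r$ (respectively $<r$), so $(1-t)\mu+t\delta_{C_\mu}$ is a legitimate point of $\vrm{M}{r}$ for all $t$, and $H$ takes values where claimed. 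I would state the needed distance estimate as a small auxiliary claim and cite the relevant convexity estimate of Karcher~\cite{Karcher1977} rather than re-deriving it.

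Given that $H$ is well-defined, continuity is free: $H(\mu,t)=(1-t)\mu+t\,(\iota\circ g)(\mu)$ is exactly the linear homotopy from $\id$ to the continuous self-map $\iota\circ g$ of the metric $r$-thickening $\vrm{M}{r}$, so Lemma~\ref{lem:homotopy-cont} applies verbatim and yields continuity of $H$. Therefore $H$ is a homotopy $\id_{\vrm{M}{r}}\simeq\iota\circ g$, and combined with $g\circ\iota=\id_M$ this shows $g$ is a homotopy equivalence with homotopy inverse $\iota$. The only real work is the diameter bound on $\{x_0,\ldots,x_k,C_\mu\}$; everything else is assembling Lemmas~\ref{lem:g-cont} and~\ref{lem:homotopy-cont} with the trivial identity $C_{\delta_x}=x$.
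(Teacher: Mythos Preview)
Your proposal is correct and follows the paper's proof essentially verbatim: $\iota$ and $g$ are continuous, $g\circ\iota=\id_M$, and the linear homotopy $H(\mu,t)=(1-t)\mu+t\,\iota g(\mu)$ gives $\iota\circ g\simeq\id$, with continuity supplied by Lemma~\ref{lem:homotopy-cont}.

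One small correction on the well-definedness step: your route via the value bound $P_\mu(x_j)\le\tfrac12 r^2$ does not by itself yield $d(C_\mu,x_j)\le r$, since minimizing only gives $P_\mu(C_\mu)\le\tfrac12 r^2$, an \emph{averaged} bound $\sum_i\lambda_i d(C_\mu,x_i)^2\le r^2$. The paper instead uses directly (from~\cite[Definition~1.3]{Karcher1977}) that the Karcher mean of a measure supported in an open convex ball $B$ lies in $B$; applying this with $B=B(x_j,r)$ for each $j$ gives $g(\mu)\in\bigcap_j B(x_j,r)$, hence $[x_0,\ldots,x_k,g(\mu)]\in\vr{M}{r}$. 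This is presumably the Karcher estimate you intended to cite, so just drop the $P_\mu(x_j)$ detour.
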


\begin{proof}
The inclusion map $\iota\colon M\to\vrm{M}{r}$ is an isometric embedding, and hence continuous. Since $r<\rho$, the map $g\colon\vrm{M}{r}\to M$ is defined and continuous. We will show that $\iota$ and  are homotopy inverses. Note $g\circ\iota=\id_{M}$.

We must show $\iota\circ g\simeq\id_{\vrm{M}{r}}$. Define $H\colon\vrm{M}{r}\times[0,1]\to\vrm{M}{r}$ by $H(\mu,t)=(1-t)\mu+t(\iota\circ g)(\mu)$. Note $H(-,0)=\id_{\vrm{M}{r}}$ and $H(-,1)=\iota\circ g$, and hence it suffices to show that $H$ is well-defined and continuous.

To see that $H$ is well-defined, let $\mu=\sum_{i=0}^k \lambda_i x_i\in\vrm{M}{r}$; it suffices to show that $[x_0,\ldots,x_k,g(\mu)]$ is a simplex in $\vr{M}{r}$. For the $<$ case, note $\mu\in\vrmless{M}{r}$ implies $\{x_0,\ldots,x_k\}\subseteq B(x_i,r)$ for all $i$, giving $g(\mu)\in\cap_i B(x_i,r)$ by~\cite[Definition~1.3]{Karcher1977} as required. For the $\le$ case, for each $i$ and $\varepsilon>0$ we have that $\{x_0,\ldots,x_k\}\subseteq B(x_i,r+\varepsilon)$, giving $g(\mu)\in\cap_i B(x_i,r+\varepsilon)$ for all $\varepsilon>0$ and hence $g(\mu)\in\cap_i \overline{B(x_i,r)}$. In either case we have that $[x_0,\ldots,x_k,g(\mu)]\in\vr{M}{r}$, so $H$ is well-defined.

Map $H$ is continuous by Lemma~\ref{lem:homotopy-cont}, and therefore $g\colon\vrm{M}{r}\to M$ is a homotopy equivalence.
\end{proof}

The paper~\cite{AM} proves a variant of the metric Hausmann's theorem when manifold $M$ is instead a subset of Euclidean space, equipped with the Euclidean metric.

\begin{remark}\label{rem:Hausmann}
The following observation is due to an anonymous referee. Generalize the definition of the Karcher mean to arbitrary metric spaces, and suppose that $X$ is a compact metric space and $K$ a simplicial complex with vertices in a subset of $X$, such that for any measure $\mu \in K^m$ the Karcher mean is well-defined, and, in particular, unique. One such example is when $X$ is a Hadamard space, i.e., a globally nonpositively curved space~\cite[Proposition~4.3]{sturm2003probability}. Then the map $g$ in Lemma~\ref{lem:g-cont} is well-defined and continuous. To see the continuity of $g$, let $\mu_n \in K^m$ be a sequence of measures converging to~$\mu$. Then by compactness of $X$ their Karcher means $g(\mu_n)$ have a convergent subsequence with limit point~$z$. Now,
\[\int_X |g(\mu_n) - x|^2\ d \mu_n(x) \le \int_X |y - x|^2\ d \mu_n(x)\]
for all $y$ by definition of the Karcher mean. In particular, by passing to the convergent subsequence and taking limits we see that $\int_X |z - x|^2\ d \mu(x) \le \int_X |y - x|^2\ d \mu(x)$ for all $y$. Thus $z$ is the Karcher mean of~$\mu$. Certainly the sequence $g(\mu_n)$ does not have an accumulation point different from~$z$, as this would contradict the uniqueness of Karcher means. By compactness $g(\mu_n)$ converges to $z = g(\mu)$. This shows the continuity of~$g$. Notice that all we used is that Karcher means of a converging sequence of measures $\mu_n \in K^m$ are eventually contained in a compact set. By the same proof as that of Theorem~\ref{thm:metric-Hausmann}, it follows that if $X$ is a compact metric space and $\vrm{X}{r}$ is a metric thickening such that for any measure $\mu \in \vrm{X}{r}$ the Karcher mean is well-defined, then the continuous map $g\colon \vrm{X}{r}\to X$ is a homotopy equivalence.
\end{remark}

Recall that the \v Cech complex $\cech{M}{r}$ has vertex set $M$ and a $k$-simplex $\{x_0, \dots, x_k\}$ if $\bigcap_{i=0}^k B_r(x_i) \ne \emptyset$. For $r$ sufficiently small all these $r$-balls and their nonempty intersections are contractible, and thus $\cech{M}{r}$ is homotopy equivalent to $M$ by the nerve lemma. The metric \v Cech thickening $\cechm{M}{r}$ is not a simplicial complex and the nerve lemma does not apply. Nonetheless, in a similar fashion to Theorem~\ref{thm:metric-Hausmann} we can show that our metric analogue of the \v Cech complex is homotopy equivalent to~$M$. 

\begin{theorem}[Metric nerve lemma]\label{thm:metric-nerve}
Let $M$ be a complete Riemannian manifold and $\rho>0$ be a real number satisfying (i) and~(ii). For $r<\frac{\rho}{2}$, the map $g\colon\cechm{M}{r}\to M$ is a homotopy equivalence.
\end{theorem}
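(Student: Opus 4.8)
The plan is to mirror the proof of Theorem~\ref{thm:metric-Hausmann} almost verbatim, keeping track of the fact that the simplices of $\cech{M}{r}$ have diameter at most $2r$ rather than at most $r$. As before, $g\colon\cechm{M}{r}\to M$ is the center of mass map $\mu\mapsto C_\mu$, and the inclusion $\iota\colon M\hookrightarrow\cechm{M}{r}$, $x\mapsto\delta_x$, is an isometric embedding and hence continuous. Since $C_{\delta_x}=x$ we have $g\circ\iota=\id_M$, so it remains only to prove $\iota\circ g\simeq\id_{\cechm{M}{r}}$, after which $g$ is a homotopy equivalence with homotopy inverse $\iota$.

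First I would check that $g$ is defined and continuous. If $p\in\bigcap_{i=0}^k B(x_i,r)$ witnesses that $\{x_0,\ldots,x_k\}$ is a simplex of $\cech{M}{r}$ (using closed balls in the $\le$ case), then $d(x_i,x_j)\le d(x_i,p)+d(p,x_j)\le 2r$, so every simplex of $\cech{M}{r}$ has diameter at most $2r$. Since $r<\tfrac{\rho}{2}$ gives $2r<\rho$, Lemma~\ref{lem:g-cont} applies to the metric thickening $\cechm{M}{r}$ (with diameter bound $2r<\rho$) and shows that $g$ is well-defined and continuous. This is exactly the point at which the hypothesis $r<\tfrac{\rho}{2}$, rather than merely $r<\rho$, is used.

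The main work is the linear homotopy $H\colon\cechm{M}{r}\times[0,1]\to\cechm{M}{r}$, $H(\mu,t)=(1-t)\mu+t(\iota\circ g)(\mu)$, with $H(-,0)=\id$ and $H(-,1)=\iota\circ g$; once it is well-defined it is continuous by Lemma~\ref{lem:homotopy-cont}, since $\cechm{M}{r}$ is a metric $2r$-thickening by Lemma~\ref{lem:thickening}. So the whole argument reduces to showing, for $\mu=\sum_{i=0}^k\lambda_ix_i\in\cechm{M}{r}$, that $[x_0,\ldots,x_k,g(\mu)]$ is a simplex of $\cech{M}{r}$. The key observation: a witness point $p$ with $d(p,x_i)<r$ for all $i$ satisfies $\{x_0,\ldots,x_k\}\subseteq B(p,r)$, and since $r<\rho$ the geodesic ball $B(p,r)$ is convex by condition~(i), so by~\cite[Definition~1.3]{Karcher1977} the Karcher mean $g(\mu)=C_\mu$ lies in $B(p,r)$ as well. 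Hence $p\in B(x_0,r)\cap\dots\cap B(x_k,r)\cap B(g(\mu),r)$, which is nonempty, so $\{x_0,\ldots,x_k,g(\mu)\}$ spans a simplex of $\cech{M}{r}$. For the $\le$ convention I would instead use $B(p,r+\varepsilon)$ for small $\varepsilon$ with $r+\varepsilon<\rho$ and let $\varepsilon\to0$, exactly as in the proof of Theorem~\ref{thm:metric-Hausmann}.

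The main obstacle is precisely this well-definedness step, and the idea that unlocks it is to track the common witness point $p$: whereas in the Vietoris--Rips case one argued that $g(\mu)$ stays inside each $B(x_i,r)$, here one argues that $g(\mu)$ stays inside the single convex ball $B(p,r)$ that already contains all of $x_0,\dots,x_k$, so that $p$ simultaneously witnesses membership of the enlarged simplex. Everything else — continuity of $g$ (Lemma~\ref{lem:g-cont}), continuity of $H$ (Lemma~\ref{lem:homotopy-cont}), the identity $g\circ\iota=\id_M$, and the conclusion that $\iota$ and $g$ are homotopy inverses — is imported directly from the tools already assembled above.
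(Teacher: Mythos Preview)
Your proposal is correct and follows essentially the same approach as the paper: track the \v{C}ech witness point $p$ (the paper calls it $y$), use it to place all $x_i$ in the convex ball $B(p,r)$ so that the Karcher mean $g(\mu)$ stays in that ball as well, and then apply Lemma~\ref{lem:homotopy-cont} for continuity of the linear homotopy, with the $\le$ case handled by the same $\varepsilon$-approximation as in Theorem~\ref{thm:metric-Hausmann}. Your write-up is simply more explicit about why the hypothesis $r<\tfrac{\rho}{2}$ (giving simplex diameter $<\rho$) is needed to invoke Lemma~\ref{lem:g-cont}.
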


\begin{proof}
We will show that the inclusion $\iota\colon M\to\cechm{M}{r}$ and the Karcher mean map $g\colon \cechm{M}{r} \to M$ are homotopy inverses. Note $g$ is defined and continuous by Lemma~\ref{lem:g-cont} since $r<\frac{\rho}{2}$. As before define $H\colon\cechm{M}{r}\times[0,1]\to\cechm{M}{r}$ by $H(\mu,t)=(1-t)\mu+tg(\mu)$. By Lemma~\ref{lem:homotopy-cont} it suffices to show that $(1-t)\mu+tg(\mu) \in \cechm{M}{r}$ for any $\mu \in \cechm{M}{r}$.  Let $\mu=\sum_{i=0}^k \lambda_i x_i\in\cechmless{M}{r}$, that is, there is a $y \in M$ with $d(y,x_i) < r$ for all $i$. Since $x_0, \dots, x_k \in B(y,r)$ we have that $g(\mu) \in B(y,r)$, which implies that $t\mu+(1-t)g(\mu) \in \cechmless{M}{r}$. The $\le$ case follows from the same approximation argument as in the proof of Theorem~\ref{thm:metric-Hausmann}.
\end{proof}

\section{Vietoris--Rips thickenings of spheres}\label{sec:spheres}

In this section we study the Vietoris--Rips thickenings of $n$-spheres, and in particular, in Theorem~\ref{thm:S^n-critical} we describe the first new homotopy type (after that of the $n$-sphere) that appears as the scale parameter increases. In order to prove this result, we first need to study the continuity properties of maps from metric thickenings into Euclidean space.

\subsection*{Maps to Euclidean space}

Let $X$ be a metric space and $K$ a simplicial complex with $V(K)=X$. We study when a map from $X$ into $\R^n$ induces a continuous map on a metric thickening $K^m$ of $X$. Given a function $f\colon X\to\R^n$, by an abuse of notation we also let $f\colon K^m\to\R^n$ denote the map defined by $\sum \lambda_ix_i\mapsto\sum \lambda_if(x_i)$, where $\sum \lambda_if(x_i)$ is a linear combination of vectors in $\R^n$.

\begin{remark}
Note that $f\colon X\to \R^n$ continuous need not imply $f\colon K^m\to\R^n$ is continuous. Indeed, as in Remark~\ref{rem:not-cont}, let
\[ X=\{(0,0)\}\cup\{(\tfrac{1}{n},0)~|~n\in\N\}\cup\{(\tfrac{1}{n},1)~|~n\in\N\}\subseteq\R^2. \]
Define the continuous map $f\colon X\to\R$ by $f(x,0)=0$ and $f(\frac{1}{n},1)=n^2$. Let $K=\vr{X}{\infty}$ be the maximal simplicial complex on vertex set $X$. Note that $\mu_n=\frac{n-1}{n}\delta_{(\frac{1}{n},0)}+\frac{1}{n}\delta_{(\frac{1}{n},1)}$ is a sequence in $K^m$ converging to $\delta_{(0,0)}$, but that $f(\mu_n)=\frac{1}{n}n^2=n$ is not a convergent sequence in~$\R$.
\end{remark}



The next lemma follows from~\cite[Theorem~6.9]{villani2008optimal} if $X$ is Polish, (or from~\cite[Theorem~5.11]{santambrogio2015optimal} if $X$ is Euclidean). We have included a proof since for finitely supported measures, the result also holds in the non-Polish case.

\begin{lemma}\label{lem:map-Rn-continuous-bounded}
Let $K^m$ be a metric thickening of metric space $X$. If $f\colon X\to \R^n$ is continuous and bounded, then so is $f \colon K^m \to \R^n$.
\end{lemma}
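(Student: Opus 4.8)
The plan is to show that convergence of measures $\mu_n \to \mu$ in the Wasserstein metric on $K^m$ implies $f(\mu_n) \to f(\mu)$ in $\R^n$, where all measures are finitely supported. First I would reduce to the one-dimensional case by treating each coordinate of $f$ separately: if $f = (f^{(1)},\ldots,f^{(n)})$ then each $f^{(\ell)}\colon X \to \R$ is continuous and bounded, say $|f^{(\ell)}| \le B$, and it suffices to prove continuity of each $\sum \lambda_i x_i \mapsto \sum \lambda_i f^{(\ell)}(x_i)$. Boundedness of $f\colon K^m \to \R^n$ is immediate since $|\sum \lambda_i f(x_i)| \le \sum \lambda_i |f(x_i)| \le B\sqrt{n}$ (or just $B$ coordinatewise), so the real content is continuity.

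For continuity, fix $\mu = \sum_{i} \lambda_i x_i \in K^m$ and $\varepsilon > 0$; I want $\eta > 0$ so that $d(\nu,\mu) < \eta$ forces $|f(\nu) - f(\mu)| < \varepsilon$ (working one coordinate at a time, so $f$ is now real-valued with $|f| \le B$). Since $\mu$ has finite support $\{x_0,\ldots,x_k\}$ and $f$ is continuous at each $x_i$, choose $r_0 > 0$ small enough that $|f(y) - f(x_i)| < \frac{\varepsilon}{2}$ whenever $d(y, x_i) < r_0$, for every $i$; also shrink $r_0$ so the balls $B(x_i, r_0)$ are pairwise disjoint (possible since the $x_i$ are finitely many distinct points). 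Now given $\nu = \sum_j \mu_j y_j$ with $d(\nu,\mu) < \eta$, pick an optimal transport plan $\pi = \sum_{i,j}\pi_{i,j}\delta_{x_i,y_j}$ with $\sum_{i,j}\pi_{i,j}d(x_i,y_j) = d(\nu,\mu) < \eta$. Split the mass into the part transported a distance less than $r_0$ and the part transported at least $r_0$; the latter has total mass at most $\eta/r_0$ by Markov's inequality. Writing $f(\mu) = \sum_{i,j}\pi_{i,j} f(x_i)$ and $f(\nu) = \sum_{i,j}\pi_{i,j}f(y_j)$, I estimate
\[
|f(\nu) - f(\mu)| \le \sum_{i,j}\pi_{i,j}\,|f(y_j) - f(x_i)| \le \frac{\varepsilon}{2}\sum_{\substack{i,j:\ d(x_i,y_j)<r_0}}\pi_{i,j} + 2B\!\!\sum_{\substack{i,j:\ d(x_i,y_j)\ge r_0}}\!\!\pi_{i,j} \le \frac{\varepsilon}{2} + \frac{2B\eta}{r_0}.
\]
Choosing $\eta = \min\{r_0, \frac{\varepsilon r_0}{4B}\}$ (with an obvious adjustment if $B = 0$) makes this at most $\varepsilon$, which proves continuity at $\mu$.

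The main obstacle — and the reason a proof is included rather than a citation — is precisely that $\int_X d(x,y)\,d\pi$ being small does not by itself control $\sum_j \mu_j f(y_j) - \sum_i \lambda_i f(x_i)$ when $f$ is merely continuous, not Lipschitz: mass moved a tiny distance near some $x_i$ is harmless by continuity of $f$ there, but there is in principle a small amount of mass that could be moved far, where only the global bound $2B$ on the oscillation of $f$ saves us. The disjointness of the balls $B(x_i,r_0)$ ensures the ``close'' mass is genuinely near the support of $\mu$ so the pointwise continuity estimate applies, and Markov's inequality quantifies that the ``far'' mass is small. One should double-check the bookkeeping that $f(\mu) = \sum_{i,j}\pi_{i,j}f(x_i)$ holds because $\sum_j \pi_{i,j} = \lambda_i$, and similarly $f(\nu) = \sum_{i,j}\pi_{i,j}f(y_j)$ because $\sum_i \pi_{i,j} = \mu_j$; these are the marginal conditions on $\pi$ and make the telescoping in the displayed inequality legitimate. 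No appeal to separability or completeness of $X$ is needed anywhere, which is the advantage over invoking \cite[Theorem~6.9]{villani2008optimal}.
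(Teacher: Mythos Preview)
Your proof is correct and follows essentially the same approach as the paper's: split the transport plan into mass moved a small distance (controlled by continuity of $f$ at the finitely many support points of~$\mu$) and mass moved far (whose total weight is small by a Markov-type bound and contributes only via the global oscillation of~$f$). The only differences are cosmetic---the paper works directly in~$\R^n$ with norms rather than reducing coordinatewise, and your disjointness requirement on the balls $B(x_i,r_0)$ is never actually used in the estimate and can be dropped.
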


\begin{proof}
Let $C$ be such that $\|f(x)-f(y)\|\leq C$ for all $x,y\in X$. Fix a point $\sum \lambda_i x_i\in K^m$ and $\varepsilon>0$. Using the continuity of $f$ at the finitely many points $x_1,\ldots,x_n$, choose $\delta>0$ so that $d(x_i,y)\le\delta$ implies $\|f(x_i)-f(y)\|\leq \varepsilon/2$ for all $i$. Reducing $\delta$ if necessary, we can also assume $\delta\le\frac{\varepsilon}{2C}$. We will show that $d(\sum \lambda_i x_i,\sum \lambda'_j x_j')\le\delta^2$ implies $\|f(\sum \lambda_i x_i)-f(\sum \lambda'_j x_j')\|\le\varepsilon$, which proves the continuity of $f \colon K^m \to \R^n$ at $\sum \lambda_i x_i$.

Let $\pi_{i,j}$ be a matching from $\sum \lambda_i x_i$ to $\sum \lambda'_j x_j'$ with $\sum_{i,j} \pi_{i,j}d(x_i,x_j')\le\delta^2$. Let $A=\{(i,j)~|~d(x_i,x_j')\geq \delta\}$ and $B=\{(i,j)~|~d(x_i,x_j')< \delta\}$. We have
\[\delta\sum_A\pi_{i,j}\leq \sum_A \pi_{i,j}d(x_i,x_j')\leq \sum_{i,j}\pi_{i,j}d(x_i,x_j')\le\delta^2,\]
so $\sum_A \pi_{i,j}\le\delta$. Hence
\begin{align*}
\Bigl\|f(\sum \lambda_i x_i)-f(\sum \lambda'_j x_j')\Bigr\|&=\Bigl\| \sum_i\lambda_if(x_i)-\sum_j\lambda'_jf(x'_j) \Bigr\|=\Bigl\| \sum_{i,j}\pi_{i,j}f(x_i)-\sum_{i,j}\pi_{i,j}f(x'_j) \Bigr\|\\
&\le\sum_{i,j} \pi_{i,j}\|f(x_i)-f(x'_j)\|\\
&=\sum_A \pi_{i,j}\|f(x_i)-f(x'_j)\|+\sum_B \pi_{i,j}\|f(x_i)-f(x'_j)\|\\
&\le C\sum_A \pi_{i,j}+\frac{\varepsilon}{2}\sum_B \pi_{i,j}\le C\delta+\varepsilon/2\le\varepsilon.
\end{align*}

To see that $f \colon K^m \to \R^n$ is bounded, note that $f(K^m)$ is contained in $\conv(f(X))$.
\end{proof}

A similar result, which is not hard to verify, is that if $K^m$ is a metric thickening of metric space $X$, and if $f\colon X\to \R^n$ is $c$-Lipschitz, then so is $f \colon K^m \to \R^n$.

\subsection*{Vietoris--Rips thickenings of spheres}

Let $A_{n+2}$ be the alternating group on $n+2$ elements. For example, the group $A_3$ is isomorphic to $\Z/3\Z$, and the group $A_4$ is also known as the tetrahedral group. In a non-canonical fashion, we can view $A_{n+2}$ as a subgroup of $\so(n+1)$, as follows. Fix a regular $(n+1)$-dimensional simplex inscribed in $S^n$ inside $\R^{n+1}$, with the center of the simplex at the origin. The $(n+1)$-simplex has $n+2$ vertices, and $A_{n+2}$ as its group of rotational symmetries. We can therefore associate each element $g\in A_{n+2}$ with a rotation matrix in $\so(n+1)$ that permutes the vertices of the simplex in the same way that $g$ does. After this non-canonical identification as a subgroup, $A_{n+2}$ acts on $\so(n+1)$ via left multiplication, and we let $\tfrac{\so(n+1)}{A_{n+2}}$ be the orbit space of this action.\footnote{By~\cite[Section~1.3, Exercise 24(b)]{Hatcher}, the homeomorphism type of $\tfrac{\so(n+1)}{A_{n+2}}$ is unchanged if one identifies $A_{n+2}$ as a subgroup of $\so(n+1)$ using a different inscribed $(n+1)$-simplex.} We also think of $\tfrac{\so(n+1)}{A_{n+2}}$ as the moduli space of regular $(n+1)$-simplices inscribed in $S^n\subseteq\R^{n+1}$.

Denote by $S^n$ the $n$-dimensional sphere equipped with either the Euclidean or the geodesic metric, and let $r_n$ be the diameter of an inscribed regular $(n+1)$-simplex in $S^n$. We will show in Proposition~\ref{prop:S^n-small} and Theorem~\ref{thm:S^n-critical} that $\vrm{S^n}{r}\simeq S^n$ for $0<r<r_n$, that $\vrmless{S^n}{r_n}\simeq S^n$, and that $\vrmleq{S^n}{r_n}\simeq\Sigma^{n+1}\ \tfrac{\so(n+1)}{A_{n+2}}$.

In particular, let $S^1$ be the circle of unit circumference equipped with the path-length metric; this gives $r_1=\frac{1}{3}$. In~\cite{AA-VRS1} the first two authors show that the Vietoris--Rips simplicial complexes of the circle satisfy $\vr{S^1}{r}\simeq S^1$ for $0<r<\frac{1}{3}$, that $\vrless{S^1}{\frac{1}{3}}\simeq S^1$, that $\vrleq{S^1}{\frac{1}{3}}\simeq\bigvee^{\mathfrak{c}}S^2$, and that $\vr{S^1}{r}\simeq S^3$ for $\frac{1}{3}<r<\frac{2}{5}$. By contrast, in the case of metric thickenings Remark~\ref{rem:S1} gives that
\[ \vrmleq{S^1}{\tfrac{1}{3}}\simeq\Sigma^2\ \tfrac{\so(2)}{A_3}=\Sigma^2\ S^1=S^3. \]

Let $f \colon \vrmleq{S^n}{r} \to \R^{n+1}$ be the projection map sending a finite convex combination of points in $S^n$ to its corresponding linear combination in $\R^{n+1}$. This map $f$ is continuous by Lemma~\ref{lem:map-Rn-continuous-bounded}. Let $\pi \colon \R^{n+1}\setminus\{\vec{0}\} \to S^n$ be the radial projection map. In addition, let $W$ be the set of all interior points of regular $(n+1)$-simplices inscribed in $\vrleq{S^n}{r_n}$. More precisely,
\[ W = \bigl\{\textstyle{\sum_{i=0}^{n+1}}\lambda_i x_i~\big|~\lambda_i>0\mbox{ for all }i\mbox{ and }\{x_0,\ldots,x_{n+1}\}\mbox{ is a regular }(n+1)\mbox{-simplex}\bigr\}. \]
Note the closure of $W$ in $\vrleq{S^n}{r_n}$ is homeomorphic to $D^{n+1} \times \tfrac{\so(n+1)}{A_{n+2}}$.

\begin{proposition}\label{prop:S^n-small}
The maps $\pi f\colon\vrm{S^n}{r}\to S^n$ for $0<r<r_n$, $\pi f\colon\vrmless{S^n}{r_n}\to S^n$, and $\pi f\colon\vrmleq{S^n}{r_n}\setminus W\to S^n$ exist and are homotopy equivalences.
\end{proposition}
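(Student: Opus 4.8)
The plan is to show that each of the three maps $\pi f$ is a homotopy equivalence by exhibiting an explicit homotopy inverse and explicit homotopies, exactly in the spirit of Theorem~\ref{thm:metric-Hausmann}. The candidate homotopy inverse in all three cases is the inclusion $\iota\colon S^n\hookrightarrow\vrm{S^n}{r}$ (restricted appropriately), which is an isometric embedding and hence continuous. First I would record that $\pi f\colon\vrm{S^n}{r}\to S^n$ is well-defined and continuous: $f$ is continuous by Lemma~\ref{lem:map-Rn-continuous-bounded} (it is the restriction to $\vrm{S^n}{r}$ of a bounded continuous map $S^n\to\R^{n+1}$), and $\pi$ is continuous away from the origin, so the only thing to check is that $f$ never hits $\vec 0$ on the relevant domain. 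For a convex combination $\mu=\sum\lambda_i x_i$ with $\{x_0,\ldots,x_k\}$ of diameter $<r_n$ (or $\le r_n$ but not all of a full regular inscribed $(n+1)$-simplex), the points $x_i$ are strictly contained in an open hemisphere — since a set of points whose Euclidean barycenter is the origin and which lies in a closed half-space must be a subset of a regular inscribed simplex configuration — hence $f(\mu)=\sum\lambda_i x_i\ne\vec 0$. I would isolate this "barycenter-zero forces regular simplex" fact as the key geometric lemma; it is exactly what makes the three domains above the maximal ones on which $\pi f$ is defined, and it is where the combinatorial constant $r_n$ enters.

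Next I would note $\pi f\circ\iota=\id_{S^n}$, since $\pi(f(\delta_x))=\pi(x)=x$ for $x\in S^n$. It remains to show $\iota\circ\pi f\simeq\id$ on each domain $Z\in\{\vrm{S^n}{r}\ (r<r_n),\ \vrmless{S^n}{r_n},\ \vrmleq{S^n}{r_n}\setminus W\}$. For this I would use the straight-line homotopy $H(\mu,t)=(1-t)\mu+t\,(\iota\circ\pi f)(\mu)$, and invoke Lemma~\ref{lem:homotopy-cont} once I verify that $H$ stays inside $Z$. Well-definedness reduces, as in Theorem~\ref{thm:metric-Hausmann}, to checking that if $\mu=\sum_{i=0}^k\lambda_i x_i\in Z$ then $[x_0,\ldots,x_k,\pi f(\mu)]$ is again a simplex in the appropriate Vietoris--Rips complex, and moreover (for the third case) that this new simplex is still not a full regular inscribed $(n+1)$-simplex with all barycentric coordinates positive. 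The point $\pi f(\mu)$ is the radial projection of the barycenter of the $x_i$; I would show it lies in the open (resp.\ closed) geodesic ball of radius $r$ about each $x_i$ by a direct spherical-geometry estimate — the radial projection of a convex combination of points lying in a ball cannot be farther from any of those points than the ball's "width" allows — so the diameter condition is preserved. For the $\le r_n$ case I would additionally argue that adding the barycenter's projection to a non-regular configuration cannot produce a regular $(n+1)$-simplex with strictly positive weights, so the homotopy avoids $\overline W$; for the strict case $r<r_n$ and the case $\vrmless{S^n}{r_n}$ the diameter bound is automatically strict and there is nothing extra to check.

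The main obstacle I anticipate is the geometric estimate that $\pi f$ (and the straight-line homotopy through it) preserves the relevant diameter constraint — i.e.\ controlling $d(x_i,\pi f(\mu))$ in terms of $\diam\{x_j\}$, uniformly, for both the Euclidean and the geodesic metric on $S^n$. The cleanest route is probably to work with the Euclidean chord metric first (where $\pi$ is radial projection and one can use that projecting onto the sphere only decreases distances between points already on the sphere, or a monotonicity argument for the angle subtended), deduce the geodesic-metric statement since the two metrics induce the same "is a simplex" relation at these scales by a monotone reparametrization, and handle the boundary-of-$W$ subtlety by a compactness/continuity argument rather than an explicit inequality. I would also double-check the edge case $k+1=n+2$ in $\vrmleq{S^n}{r_n}\setminus W$: here $\mu$ is supported on a full regular inscribed simplex but has some $\lambda_i=0$, so it actually lies on a proper face and the analysis collapses to a lower value of $k$, which is consistent. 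Once these verifications are in place, Lemma~\ref{lem:homotopy-cont} gives continuity of $H$ for free, and the proof concludes exactly as Theorem~\ref{thm:metric-Hausmann} does.
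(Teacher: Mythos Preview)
Your approach is essentially identical to the paper's: the inclusion $\iota\colon S^n\hookrightarrow Y$ as homotopy inverse, the straight-line homotopy $H(\mu,t)=(1-t)\mu+t\,\iota\pi f(\mu)$, and Lemma~\ref{lem:homotopy-cont} for continuity. The two geometric facts you correctly isolate as the crux are exactly where the paper defers to external references---that $f(\mu)\neq\vec 0$ on each domain is Lov{\'a}sz's lemma~\cite[Lemma~3]{lovasz1983self}, and that $\{x_0,\ldots,x_k,\pi f(\mu)\}$ remains a simplex of the required type is~\cite{AM}---so your instinct to flag these as the hard steps is right. One caution on your sketch of the second: radial projection onto $S^n$ does \emph{not} in general decrease distances between points already on the sphere, so that route will not work as stated; the actual argument is more delicate, and your fallback idea of controlling the subtended angle is closer to what is needed.
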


\begin{proof}
An identical proof works for all three maps, and hence we let $Y$ denote either $\vrm{S^n}{r}$ for $0<r<r_n$, or $\vrmless{S^n}{r_n}$, or $\vrmleq{S^n}{r_n}\setminus W$. The composition $\pi f \colon Y \to S^n$ is defined because $f(y)\neq\vec{0}$ for $\mu\in Y$ by the proof of~\cite[Lemma~3]{lovasz1983self}. Map $\pi f$ is continuous since $\pi$ and $f$ are.

We will show that the homotopy inverse to $\pi f$ is the inclusion map $i \colon S^n \hookrightarrow Y$, which is continuous only since $Y$ is equipped with the Wasserstein metric. Clearly $\pi f \circ i=id_{S^n}$. In order to show $i \circ \pi f \simeq id_{Y}$, define homotopy $H \colon Y \times I \to Y$ by $H(\mu,t)=(1-t)\mu+t\cdot i\pi f(\mu)$. This map is well-defined because if $\mu=\sum_{i=0}^k \lambda_i x_i$, then $\{x_0, \ldots, x_k, \pi f(\mu)\}$ is a simplex in $Y$;~see~\cite{AM}. Since $\pi f$ is continuous, map $H$ is continuous by Lemma~\ref{lem:homotopy-cont}. Since $H(-,0)=id_{Y}$ and $H(-,1)=i \circ \pi f$, we have shown $i \circ \pi f \simeq id_{Y}$. Therefore $\pi f\colon Y\to S^n$ is a homotopy equivalence.
\end{proof}

\begin{theorem}\label{thm:S^n-critical}
We have a homotopy equivalence $\vrmleq{S^n}{r_n}\simeq \Sigma^{n+1}\ \tfrac{\so(n+1)}{A_{n+2}}$.
\end{theorem}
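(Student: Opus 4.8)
The plan is to exhibit $\vrmleq{S^n}{r_n}$ as built from two pieces whose homotopy types we already understand, and then identify the result as an iterated suspension. Recall from Proposition~\ref{prop:S^n-small} that $\vrmleq{S^n}{r_n}\setminus W$ deformation retracts onto $S^n$ via $\pi f$, and that the closure $\overline W$ of $W=\{\sum_{i=0}^{n+1}\lambda_i x_i \mid \lambda_i>0,\ \{x_0,\ldots,x_{n+1}\}\text{ a regular }(n+1)\text{-simplex}\}$ is homeomorphic to $D^{n+1}\times\tfrac{\so(n+1)}{A_{n+2}}$. So the first step is to set up the decomposition $\vrmleq{S^n}{r_n}=\overline{W}\cup(\vrmleq{S^n}{r_n}\setminus W)$, or rather a version with a slightly shrunken open neighborhood of $\overline W$ so the two pieces form an open cover amenable to a Mayer--Vietoris / homotopy-pushout argument. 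The intersection, up to homotopy, should be the boundary $\partial W\cong S^n\times\tfrac{\so(n+1)}{A_{n+2}}$ (the boundary sphere of the $D^{n+1}$ factor), since a point of $W$ approaches $\vrmleq{S^n}{r_n}\setminus W$ exactly as one of its barycentric coordinates goes to $0$.

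Next I would compute the attaching map. The piece $\vrmleq{S^n}{r_n}\setminus W\simeq S^n$ via $\pi f$, and I claim that under this identification the inclusion $\partial W\hookrightarrow \vrmleq{S^n}{r_n}\setminus W$ becomes, up to homotopy, the projection $S^n\times\tfrac{\so(n+1)}{A_{n+2}}\to S^n$ composed with the antipodal map (or identity — the sign won't matter). Indeed, a boundary point of $\overline W$ is a convex combination $\sum\lambda_i x_i$ of a regular simplex with some $\lambda_i=0$, and $\pi f$ sends it to the radial projection of $\sum \lambda_i x_i\in\R^{n+1}$; as the simplex rotates (the $\tfrac{\so(n+1)}{A_{n+2}}$ coordinate) with the barycentric weights held on the boundary stratum, this traces out a degree-$\pm1$ map to $S^n$ onto the first factor. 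Granting this, the homotopy pushout
\[
\vrmleq{S^n}{r_n}\simeq \overline W \cup_{\partial W} (\vrmleq{S^n}{r_n}\setminus W) \simeq \bigl(D^{n+1}\times\textstyle\frac{\so(n+1)}{A_{n+2}}\bigr)\cup_{S^n\times\frac{\so(n+1)}{A_{n+2}}} S^n
\]
is exactly the construction whose homotopy type is $\Sigma^{n+1}\tfrac{\so(n+1)}{A_{n+2}}$: collapsing the $D^{n+1}$ factor to a point in $D^{n+1}\times Z$ and attaching along $S^n\times Z\to \mathrm{pt}$ on the $S^n$ side, after filling the $S^n$ in by a disk on its own side (which is what $\pi f$'s contraction to a point in the degree-$1$ case amounts to up to homotopy — or more carefully, one uses that $S^n$ is the cofiber of $S^n\times Z\xrightarrow{\mathrm{pr}} S^n$ only after smashing, i.e., $\Sigma^{n+1}Z = C(S^n\times Z)\cup_{S^n\times Z}(S^n\times Z/S^n\times \mathrm{pt})$ — so the cleanest route is to recognize the pushout as $\mathrm{cone}(S^n\ast Z_+)$ type data). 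I would make this precise by noting $\Sigma^{n+1}Z\simeq S^n\ast Z$ up to a shift, and that the pushout of $D^{n+1}\times Z\leftarrow S^n\times Z\to S^n$ where the right map is the projection is a standard model for $S^n\ast Z$ (the join): the join $S^n\ast Z$ is precisely $(D^{n+1}\times Z)\cup_{S^n\times Z}(S^n\times C Z)$, and replacing $C Z$ by a point gives $S^n\ast Z / S^n\ast\mathrm{pt}$; one checks $S^n\ast\mathrm{pt}=D^{n+1}$ is contractible, so the quotient is still $S^n\ast Z\simeq \Sigma^{n+1}Z$. Matching the right-hand map (projection, not a constant) to our situation is the crux.

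The main obstacle I expect is the second step: rigorously identifying the gluing data, i.e., showing that the inclusion of a collar of $\overline W$'s boundary into $\vrmleq{S^n}{r_n}\setminus W$ agrees up to homotopy with the first-factor projection $S^n\times\tfrac{\so(n+1)}{A_{n+2}}\to S^n$ under the equivalence $\pi f$. This requires understanding how $\pi f$ behaves near $W$ — in particular that a regular simplex sitting slightly inside $S^n$ still projects radially in a way that recovers its "direction", which is where the Lovász-type fact ($f(\mu)\neq\vec 0$) from the proof of Proposition~\ref{prop:S^n-small} and a careful analysis of the boundary stratification of $\vrmleq{S^n}{r_n}$ (which faces of $W$ meet which simplices) come in. A secondary technical point is promoting the set-theoretic decomposition into an honest homotopy pushout: one needs an open neighborhood of $\overline W$ that deformation retracts to it within $\vrmleq{S^n}{r_n}$, which should follow from the local structure of the metric thickening near the top-dimensional stratum, using Lemma~\ref{lem:homotopy-cont} for the requisite linear homotopies, but the continuity bookkeeping in the Wasserstein metric will require some care. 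Once the pushout and its maps are pinned down, identifying it with $S^n\ast\tfrac{\so(n+1)}{A_{n+2}}\simeq\Sigma^{n+1}\tfrac{\so(n+1)}{A_{n+2}}$ is formal.
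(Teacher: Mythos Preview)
Your proposal is correct and follows essentially the same strategy as the paper: decompose $\vrmleq{S^n}{r_n}$ into $\overline{W}$ and its complement, replace the complement by $S^n$ via the homotopy equivalence $\pi f$, identify the attaching map as the projection $S^n\times\tfrac{\so(n+1)}{A_{n+2}}\to S^n$, and recognize the resulting pushout as the join $S^n*\tfrac{\so(n+1)}{A_{n+2}}\simeq\Sigma^{n+1}\tfrac{\so(n+1)}{A_{n+2}}$. The paper sidesteps both of the obstacles you anticipate by parametrizing the boundary via $g(x,y)=(\pi f|_{\partial\Delta_y})^{-1}(x)$, so that $\pi f\circ g$ equals the projection \emph{on the nose} (not merely up to homotopy), and by invoking homotopy invariance of adjunction spaces (\cite[7.5.7]{brown2006topology}) directly on the closed decomposition rather than thickening to an open cover.
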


\begin{proof}
We will construct the following commutative diagram.
\begin{center}
\begin{tikzpicture}[description/.style={fill=white,inner sep=2pt}] 
\matrix (m) [matrix of math nodes, row sep=3em, 
column sep=4em, text height=1.5ex, text depth=0.25ex] 
{  
D^{n+1} \times \frac{\so(n+1)}{A_{n+2}} \supset S^n \times \frac{\so(n+1)}{A_{n+2}} & S^n\\
D^{n+1} \times \frac{\so(n+1)}{A_{n+2}} \supset S^n \times \frac{\so(n+1)}{A_{n+2}} & \vrmleq{S^n}{r_n}\setminus W\\
};
\path[->,font=\scriptsize]
(m-1-1) edge node[above] {$h$} (m-1-2)
(m-1-1) edge node[below] {$h(x,y)=x$} (m-1-2)
(m-2-1) edge node[auto] {$g$} (m-2-2)
(m-2-1) edge node[auto] {$=$} (m-1-1)
(m-2-2) edge node[auto] {$\pi f$} (m-1-2)
;
\path[,font=\scriptsize]
(m-2-2) edge node[right] {$\simeq$} (m-1-2)
;
\end{tikzpicture}
\end{center}
For $y \in \tfrac{\so(n+1)}{A_{n+2}}$ let $\{y_0,\ldots,y_{n+1}\}$ be the $n+2$ vertices of the rotated regular $(n+1)$-simplex parameterized by $y$, and let
\[ \partial\Delta_y=\Biggl\{\sum_{i=0}^{n+1} \lambda_i y_i\in\vrmleq{S^n}{r_n}\setminus W~\Big|~\lambda_i=0\mbox{ for some }i\Biggr\} \] 
be the boundary of the corresponding simplex. Note $\pi f|_{\partial\Delta_y} \colon \partial\Delta_y \to S^n$ is bijective. Define map $g\colon S^n \times \frac{\so(n+1)}{A_{n+2}}\to\vrmleq{S^n}{r_n}\setminus W$  by letting $g(x,y)$ be the unique point of $\partial\Delta_y$ such that $\pi f(g(x,y))=x$; that is, $g(x,y)=(\pi f|_{\partial\Delta_y})^{-1}(x)$. Note $\pi f\circ g=h$, meaning the square commutes.

We now have the following sequence of homotopy equivalences.
\begin{align*}
\vrmleq{S^n}{r_n} &= \vrmleq{S^n}{r_n}\setminus W \cup_g \bigl(D^{n+1} \times \tfrac{\so(n+1)}{A_{n+2}}\bigr) \\
&\simeq S^n \cup_h \bigl(D^{n+1} \times \tfrac{\so(n+1)}{A_{n+2}}\bigr) \\
&\simeq \Bigl(S^n \times C\bigl(\tfrac{\so(n+1)}{A_{n+2}}\bigr)\Bigr) \cup_{S^n \times \tfrac{\so(n+1)}{A_{n+2}}} \Bigr(C(S^n) \times \tfrac{\so(n+1)}{A_{n+2}} \Bigl)\\
&= S^n * \tfrac{\so(n+1)}{A_{n+2}} = \Sigma^{n+1}\ \tfrac{\so(n+1)}{A_{n+2}}.
\end{align*}
Indeed, the first line is by the definition of $W$ and $g$. The second line follows from the commutative diagram above and the homotopy invariance properties of adjunction spaces (\cite[7.5.7]{brown2006topology} or~\cite[Proposition~5.3.3]{tom2008algebraic}). The third line follows from these same properties of adjunction spaces, induced by contractibility of $C(\tfrac{\so(n+1)}{A_{n+2}})$. The fourth line uses an equivalent definition for the join of two topological spaces as $Y*Z=Y\times C(Z)\cup_{Y\times Z}C(Y)\times Z$, and the fact that joining with a sphere gives an iterated suspension.
\end{proof}

\begin{remark}\label{rem:S1}
The case $n=1$ is instructive. We have $\tfrac{\so(2)}{A_3}=\tfrac{S^1}{\Z/3\Z}=S^1$, and hence the commutative diagram implies
\begin{align*}
\vrmleq{S^1}{r_1} &= \vrmleq{S^1}{r_1}\setminus W \cup_g (D^2 \times S^1) \simeq S^1 \cup_h (D^2 \times S^1) \\
&\simeq (S^1 \times D^2) \cup_{S^1\times S^1} (D^2 \times S^1) = S^1 * S^1 = S^3.
\end{align*}
\end{remark}

\begin{remark}
Since $\so(n+1)$ is a compact Lie group of dimension $\binom{n+1}{2}$, we have
\[ \tilde{H}_i\bigl(\vrmleq{S^n}{r_n}\bigr)=\tilde{H}_i\Bigl(\Sigma^{n+1}\ \tfrac{\so(n+1)}{A_{n+2}}\Bigr)=\tilde{H}_{i-n-1}\Bigl(\tfrac{\so(n+1)}{A_{n+2}}\Bigr)=
\begin{cases}
0 & i\le n+1\\
\Z & i=\binom{n+1}{2}+n+1.
\end{cases} \]

For the case $n=2$, let $T=A_4$ be the tetrahedral group. Let $2T$ be the binary tetrahedral group; it has $24$ elements but is isomorphic to neither $S_4$ nor $T \times \Z/2\Z$. The spherical 3-manifold $\frac{\so(3)}{T} = \frac{S^3}{2T}$ has fundamental group isomorphic to the binary tetrahedral group $2T$ with abelianization~$\Z/3$. Since $\frac{\so(3)}{T}$ is a 3-dimensional closed orientable manifold, this gives
\[ \tilde{H}_i\bigl(\vrmleq{S^2}{r_2}\bigr)=\tilde{H}_{i-3}\Bigl(\tfrac{\so(3)}{T}\Bigr) = \begin{cases} 
\Z/3 & i=4\\
\Z & i=6\\
0 & \mbox{otherwise.}
\end{cases} \]
\end{remark}

\begin{conjecture}
We conjecture that for all $n$, there exists an $\varepsilon>0$ such that
\begin{itemize}
\item $\vrmleq{S^n}{r}$ is homotopy equivalent to $\Sigma^{n+1}\ \tfrac{\so(n+1)}{A_{n+2}}$ for all $r_n\le r<r+\varepsilon$, and 
\item $\vrmless{S^n}{r}$, $\vrless{S^n}{r}$, and $\vrleq{S^n}{r}$ are homotopy equivalent to $\Sigma^{n+1}\ \tfrac{\so(n+1)}{A_{n+2}}$ for all $r_n<r<r+\varepsilon$.
\end{itemize}
\end{conjecture}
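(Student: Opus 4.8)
The plan is to prove the statement first for the metric thickening $\vrmleq{S^n}{r}$---where the endpoint $r=r_n$ is Theorem~\ref{thm:S^n-critical}, so the new content is an interval $[r_n,r_n+\varepsilon)$---and then to transfer it to the three simplicial versions. For the thickening I would upgrade the cell decomposition in the proof of Theorem~\ref{thm:S^n-critical} from the single scale $r_n$ to a half-open interval of scales. Write $f\colon\vrmleq{S^n}{r}\to\R^{n+1}$ for the projection map of Section~\ref{sec:spheres} and $Z_r=f^{-1}(\vec0)$ for the locus of measures whose barycenter is the origin; the lemma of Lov\'asz used in the proof of Proposition~\ref{prop:S^n-small} shows $Z_r=\emptyset$ for $r<r_n$, and that $Z_{r_n}$ is precisely the set of barycenters of inscribed regular $(n+1)$-simplices, a copy of $\tfrac{\so(n+1)}{A_{n+2}}$. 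The goal is to construct, for $r-r_n$ small, an open set $W_r\supseteq Z_r$ playing the role of $W$, with two properties: (a) $\pi f$ is defined on $\vrmleq{S^n}{r}\setminus W_r$ and the straight-line homotopy $\mu\mapsto(1-t)\mu+t\,(i\pi f)(\mu)$ preserves $\vrmleq{S^n}{r}\setminus W_r$, so that $\pi f$ exhibits $\vrmleq{S^n}{r}\setminus W_r\simeq S^n$ exactly as in Proposition~\ref{prop:S^n-small}; and (b) $\overline{W_r}$ is homeomorphic, or at least homotopy equivalent as a pair, to $\bigl(D^{n+1},S^n\bigr)\times\tfrac{\so(n+1)}{A_{n+2}}$, with the boundary $S^n\times\tfrac{\so(n+1)}{A_{n+2}}$ mapped to $S^n$ by $\pi f$. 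Granting (a) and (b), the homotopy-pushout and adjunction-space computation of Theorem~\ref{thm:S^n-critical} applies verbatim and yields
\[ \vrmleq{S^n}{r}\ \simeq\ S^n * \tfrac{\so(n+1)}{A_{n+2}}\ =\ \Sigma^{n+1}\ \tfrac{\so(n+1)}{A_{n+2}}. \]
Running the same argument with the strict convention, where now $Z_r$ is nonempty only once $r>r_n$, settles $\vrmless{S^n}{r}$ on $(r_n,r_n+\varepsilon)$.

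I expect property (b) to be the main obstacle. It amounts to the statement that the inscribed regular simplex is a \emph{non-degenerate} minimum, modulo the $\so(n+1)$-symmetry, of the width functional $\mu\mapsto\diam(\mathrm{supp}\,\mu)$ on the space of finitely supported probability measures on $S^n$ with barycenter at the origin. Lov\'asz's lemma already gives that the minimum value is $r_n$, attained exactly on the orbit $\tfrac{\so(n+1)}{A_{n+2}}$; what is missing is that transverse to this orbit the width grows quadratically, so that the sublevel set $\{\diam\le r\}$ of balanced measures deformation retracts onto $\tfrac{\so(n+1)}{A_{n+2}}$ and, fibered over $\vrmleq{S^n}{r}$, forms a trivial $D^{n+1}$-bundle. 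This should reduce to a Hessian computation at the regular simplex together with an equivariant Morse--Bott argument, and it is what forces $\varepsilon$ to depend on~$n$. Property (a) should then follow from a compactness refinement of the argument of~\cite{AM} that $\{x_0,\dots,x_k,\pi f(\mu)\}$ is a simplex whenever $\mu$ is not an interior point of a regular simplex: once the dangerous measures have been absorbed into $W_r$, a configuration of diameter $\le r$ that is not close to a balanced one keeps the origin well outside its convex hull, leaving enough room to adjoin $\pi f(\mu)$ without raising the diameter past $r$ when $r-r_n$ is small.

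To pass from the thickenings to the simplicial complexes $\vrless{S^n}{r}$ and $\vrleq{S^n}{r}$ at scales $r_n<r<r_n+\varepsilon$, the intended route is a metric nerve lemma in the spirit of Remark~\ref{rem:nerve} and Property~\ref{prop:weak-equiv}, giving $\vrless{S^n}{r}\simeq\vrmless{S^n}{r}$, together with the fact---supplied by letting $r$ vary in the cell decomposition above---that the homotopy type is locally constant on $(r_n,r_n+\varepsilon)$, so that $\vrleq{S^n}{r}$, sandwiched between $\vrless{S^n}{r}$ and $\vrless{S^n}{r'}$ for $r<r'$, inherits the same homotopy type. An alternative is to approximate $S^n$ by dense finite samples and invoke interleaving together with the (finite-case) stability of persistent homology to reduce everything to the homotopy types of Vietoris--Rips complexes of dense subsets of $S^n$. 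The real difficulty is that this last combinatorial input is understood only for $n=1$, where $\vr{S^1}{r}\simeq S^3$ for $\tfrac13<r<\tfrac25$ and $\vrmleq{S^1}{r}\simeq S^3$ for $\tfrac13\le r<\tfrac25$ are known~\cite{AA-VRS1}; carrying the cyclic-polytope bookkeeping of that argument over to $S^n$ with $n\ge2$ is precisely the gap that keeps the present statement a conjecture.
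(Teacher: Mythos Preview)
This statement is a \emph{conjecture} in the paper; there is no proof to compare against. Your proposal is accordingly not a proof but a research outline, and you say as much in the final paragraph. The outline is reasonable---it correctly isolates the two missing ingredients (a) and (b) needed to push the cell decomposition of Theorem~\ref{thm:S^n-critical} from the single scale $r_n$ to an interval.

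Two obstructions deserve emphasis. First, property (b) is not a finite-dimensional Morse--Bott problem: the space of finitely supported balanced probability measures on $S^n$ is infinite-dimensional and not a manifold, and for any $r>r_n$ the locus $\{\diam\le r\}\cap f^{-1}(\vec0)$ already contains measures supported on arbitrarily many points, so a Hessian computation at the regular simplex does not by itself control the global topology of that locus or force a deformation retraction onto $\tfrac{\so(n+1)}{A_{n+2}}$. Second, your passage from thickening to simplicial complex via Remark~\ref{rem:nerve} and Property~\ref{prop:weak-equiv} requires $X$ to be discrete, which $S^n$ is not; whether $\vrless{X}{r}\simeq\vrmless{X}{r}$ for non-discrete $X$ is itself an open question flagged after Definition~\ref{def:vrm}. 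Your alternative via finite approximations and stability recovers only persistent-homology information, not homotopy type. These are precisely the reasons the statement remains a conjecture.
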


\section{Maps between Vietoris--Rips complexes and thickenings}\label{sec:maps}

Let $X$ be a metric space and $K$ a simplicial complex on vertex set $X$. We study some of the basic relationships between the simplicial complex $K$ and the metric thickening $K^m$. Note there is a natural bijection from the geometric realization of $K$ to $K^m$, given by $\sum \lambda_ix_i\mapsto\sum \lambda_i\delta_{x_i}$. We will show that the function $K\to K^m$ is continuous, a homeomorphism if $X$ is finite, and a weak equivalence if $X$ is discrete (Property~\ref{prop:continuous}--\ref{prop:weak-equiv}). The reverse function $K^m\to K$ is continuous if and only if $K$ is locally finite (Property~\ref{prop:locally-finite}). By an abuse of notation, we will let $\mu$ denote both $\sum \lambda_i\delta_{x_i}$ and $\sum \lambda_ix_i$.

As corollaries, we deduce that in the restricted setting of finite metric spaces, there are metric analogues of Latschev's theorem and the stability of persistent homology (Corollaries~\ref{cor:metric-Latschev} and \ref{cor:stability}). We conjecture that these theorems hold also in the case of arbitrary metric spaces (Conjecture~\ref{conj:metric-Latschev} and Conjecture~\ref{conj:stability}), but this is currently unknown. 

One could also compare the spaces considered in this section to the \emph{metric of barycentric coordinates} in~\cite[Section~7A.5]{bridson2011metric} and~\cite{dowker1952topology}. A related construction is given in~\cite{marin}.

\begin{proposition}\label{prop:continuous}
If $K^m$ is a metric thickening, then the function $K\to K^m$ is continuous.
\end{proposition}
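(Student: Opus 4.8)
The plan is to compare the two topologies directly via sequences or nets, recalling that the geometric realization $K$ carries the weak topology: a set is closed in $K$ iff its intersection with each closed simplex is closed. Concretely, a point of $K$ (and of $K^m$, under the natural bijection) is a formal convex combination $\mu = \sum_{i} \lambda_i x_i$ supported on the vertices of a single simplex $\sigma \in K$. To show continuity of $\iota\colon K \to K^m$, it suffices, by the defining property of the weak topology, to show that the restriction of $\iota$ to each closed simplex $\overline{\sigma}$ is continuous into $K^m$. But a closed simplex $\overline{\sigma}$ with vertices $x_0,\dots,x_k$ is just the standard simplex $\Delta^k$ with its Euclidean (hence metric) topology, and $\iota|_{\overline{\sigma}}$ sends barycentric coordinates $(\lambda_0,\dots,\lambda_k)$ to the measure $\sum_i \lambda_i \delta_{x_i} \in \cP(X)$.

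So the crux reduces to: the map $\Delta^k \to \cP(X)$, $(\lambda_i)_i \mapsto \sum_i \lambda_i \delta_{x_i}$, is continuous in the Wasserstein metric, for any fixed finite tuple $x_0,\dots,x_k \in X$. This I would verify by an explicit transport-plan estimate, in the spirit of Lemma~\ref{lem:homotopy-cont}: given $\lambda = (\lambda_i)$ and $\lambda' = (\lambda'_i)$ close in $\Delta^k$, the measures $\mu = \sum_i \lambda_i \delta_{x_i}$ and $\mu' = \sum_i \lambda'_i \delta_{x_i}$ share the same support, so one can route the transport through the "diagonal" plan: move $\min(\lambda_i,\lambda'_i)$ mass at $x_i$ to itself (cost zero) and transport the leftover $\sum_i (\lambda_i - \lambda'_i)_+ = \tfrac12 \|\lambda - \lambda'\|_1$ total mass arbitrarily among the remaining vertices, at cost at most $D \cdot \tfrac12\|\lambda-\lambda'\|_1$ where $D = \max_{i,j} d(x_i,x_j) = \diam\{x_0,\dots,x_k\} < \infty$. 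Hence $d(\mu,\mu') \le \tfrac{D}{2}\|\lambda - \lambda'\|_1$, giving Lipschitz continuity on each closed simplex. Alternatively one can invoke \eqref{eq:dual} directly: for any $1$-Lipschitz $g$, $\int g\, d(\mu - \mu') = \sum_i (\lambda_i - \lambda'_i) g(x_i) = \sum_i (\lambda_i - \lambda'_i)(g(x_i) - g(x_0)) \le D \sum_i |\lambda_i - \lambda'_i|$, yielding the same bound.

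Assembling: $\iota|_{\overline\sigma}$ is continuous for every simplex $\sigma$, so $\iota$ is continuous by the universal property of the weak topology on $K$ (a map out of a CW/simplicial complex is continuous iff its restriction to each closed cell is). I expect the only genuinely delicate point to be bookkeeping around the weak topology when $K$ is not locally finite — one must phrase the argument purely in terms of restrictions to closed simplices rather than, say, attempting a global modulus of continuity (which does not exist, precisely because the diameters $D_\sigma$ of simplices are unbounded; this is exactly why the reverse map $K^m \to K$ fails to be continuous in the non-locally-finite case, cf. Property~\ref{prop:locally-finite}). No compactness or finiteness hypothesis on $X$ is needed, since everything happens one finite simplex at a time.
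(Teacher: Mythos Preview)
Your proposal is correct and follows essentially the same approach as the paper: reduce to closed simplices via the coherent (weak) topology on $K$, then show the restriction to each simplex is Lipschitz into $K^m$. The paper merely asserts the Lipschitz bound without details, whereas you supply the explicit transport-plan (and dual) estimate; otherwise the arguments coincide.
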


\begin{proof}

The simplicial complex $K$ is equipped with the coherent topology, and therefore a map $K\to K^m$ is continuous if and only if its restriction $\sigma \to K^m$ is continuous for each closed simplex $\sigma$ in $K$. If $\sigma$ is a $k$-simplex, then its topology is induced from the Euclidean metric after embedding $\sigma$ into $\R^{k+1}$ in the standard way. After giving $\sigma$ this metric, one can show that $\sigma \to K^m$ is Lipschitz and hence continuous.
\end{proof}

\begin{proposition}\label{prop:homeo}
If $K^m$ is a metric thickening of a finite metric space $X$, then the function $K\to K^m$ is a homeomorphism.
\end{proposition}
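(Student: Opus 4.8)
The plan is to upgrade the continuous bijection $K \to K^m$ from Property~\ref{prop:continuous} to a homeomorphism by producing a continuous inverse, using the finiteness of $X$ in an essential way. Write $X = \{x_0, \dots, x_N\}$. Since $X$ is finite, the geometric realization of $K$ is a compact (it is a finite simplicial complex, hence a compact metrizable space) and so is each of its simplices; likewise, as $X$ is finite, $K^m$ sits inside $\cP(X)$, which is the finite-dimensional simplex of probability vectors on $N+1$ points with the Wasserstein metric, and $K^m$ is a closed subset of it, hence compact. A continuous bijection from a compact space to a Hausdorff space is automatically a homeomorphism, so the whole statement reduces to: (a) $K \to K^m$ is continuous, which is Property~\ref{prop:continuous}; (b) $K$ is compact, which is immediate from $X$ finite; and (c) $K^m$ is Hausdorff, which is immediate since it is a metric space.

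First I would spell out that $K$, as the geometric realization of a finite simplicial complex on vertex set $X$, is a compact topological space — this is standard, e.g.\ it embeds as a closed bounded subset of $\R^{N+1}$ via barycentric coordinates, and the coherent topology agrees with the subspace topology when there are finitely many simplices. Second, Property~\ref{prop:continuous} gives that $f \colon K \to K^m$ is continuous, and the natural bijection described before Property~\ref{prop:continuous} shows $f$ is a bijection. Third, $K^m$ is a metric space (a submetric space of $\cP(X)$ with the $1$-Wasserstein metric), hence Hausdorff. Fourth, invoke the classical fact that a continuous bijection from a compact space onto a Hausdorff space is a homeomorphism (closed map argument: $f$ sends closed, hence compact, subsets of $K$ to compact, hence closed, subsets of $K^m$, so $f^{-1}$ is continuous).

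I do not expect a genuine obstacle here; the only mild subtlety is making sure the reader is convinced that the coherent (weak) topology on a \emph{finite} simplicial complex coincides with its compact metrizable topology, so that ``$K$ compact'' is legitimate — but this is entirely standard and can be stated without proof. One could alternatively give a more hands-on argument by checking directly that on each closed simplex $\sigma$ the inverse map $K^m \supseteq f(\sigma) \to \sigma$ is continuous (both are compact subsets of finite-dimensional spaces and $f|_\sigma$ is a continuous bijection between them) and then gluing over the finitely many maximal simplices; but the compact-to-Hausdorff route is cleaner and is the one I would write.
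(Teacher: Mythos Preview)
Your proof is correct and takes a genuinely different route from the paper's. You invoke the classical ``continuous bijection from compact to Hausdorff is a homeomorphism'' argument, reducing everything to compactness of the finite complex $K$ and continuity of $K\to K^m$ (Property~\ref{prop:continuous}). The paper instead works locally: it shows that on each closed simplex $\sigma$ the map $h|_\sigma\colon\sigma\to h(\sigma)$ is bi-Lipschitz (hence a homeomorphism), notes that each $h(\sigma)$ is closed in $K^m$, and then obtains continuity of $h^{-1}$ by gluing the finitely many continuous maps $h(\sigma)\to\sigma$ along closed sets. Your approach is shorter and more conceptual; the paper's approach yields the slightly stronger quantitative fact that $h$ is bi-Lipschitz on simplices, and it is essentially the hands-on alternative you sketch at the end of your proposal.
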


\begin{proof}
Denote the function by $h\colon K\to K^m$. For any closed simplex $\sigma$ in $K$, note $h(\sigma)$ is closed in $K^m$ since a convergent sequence of points in $h(\sigma)$ must converge to a point in $h(\sigma)$. In the proof of Property~\ref{prop:continuous} we showed that each map $h\colon \sigma\to h(\sigma)$ is Lipschitz after equipping $\sigma$ with the Euclidean metric, but more is true: the map $h\colon \sigma\to h(\sigma)$ is bi-Lipschitz and hence a homeomorphism. Therefore $h^{-1}\colon K^m\to K$ is continuous as it is formed by gluing together continuous maps on a finite number of closed sets $h(\sigma)$.
\end{proof}

\begin{proposition}\label{prop:locally-finite}
Let $K^m$ be a metric $r$-thickening. The map $K^m\to K$ is continuous if and only if $K$ is locally finite.
\end{proposition}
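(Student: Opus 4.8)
The plan is to prove each direction separately, with the ``only if'' direction being the genuinely substantive one.

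For the ``if'' direction, suppose $K$ is locally finite. I would check continuity of the bijection $K^m \to K$ at an arbitrary point $\mu = \sum_{i=0}^k \lambda_i x_i \in K^m$. Since $K$ is locally finite, the vertex $x_0$ lies in only finitely many simplices of $K$, so there is a finite subcomplex $K_0 \subseteq K$ containing every simplex that contains $x_0$; in particular every simplex meeting the star of $x_0$ lies in $K_0$. I would then argue that a measure $\nu \in K^m$ sufficiently Wasserstein-close to $\mu$ must be supported on the vertices of $K_0$: any transport plan of small cost from $\mu$ to $\nu$ must move almost all the mass $\lambda_0$ at $x_0$ only a short distance, and by choosing the closeness threshold smaller than (a fraction of) the distance from $x_0$ to the nearest vertex not adjacent to $x_0$, we force $\nu$'s support into $V(K_0)$. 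Hence near $\mu$ the map $K^m \to K$ factors through $K_0^m \to K_0$, which is a homeomorphism by Proposition~\ref{prop:homeo} since $K_0$ is finite. Continuity at $\mu$ follows.

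For the ``only if'' direction, suppose $K$ is not locally finite, so some vertex $v$ is contained in infinitely many simplices. This is where the argument is most delicate, and I expect it to be the main obstacle: I need to produce a sequence in $K^m$ that converges in the Wasserstein metric but whose images in $K$ do not converge. Using that $K^m$ is a metric $r$-thickening, every edge at $v$ has length at most $r$, so there are infinitely many vertices $w_1, w_2, \ldots$ each joined to $v$ by an edge, with $d(v,w_n) \le r$. The natural candidate is $\mu_n = (1-\tfrac1n)\delta_v + \tfrac1n \delta_{w_n}$. In $K^m$ we have $d(\mu_n, \delta_v) \le \tfrac1n d(v,w_n) \le \tfrac{r}{n} \to 0$, so $\mu_n \to \delta_v$. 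But in the simplicial complex $K$ with its coherent (weak) topology, the points $\mu_n$ lie in pairwise ``spread out'' open simplices; I would argue $\{\mu_n\}$ is a closed discrete subset of $K$ (for instance because it meets each closed simplex in at most one point, or finitely many points), so it has no convergent subsequence and in particular $\mu_n \not\to \delta_v$ in $K$. Therefore $K^m \to K$ cannot be continuous. A minor wrinkle is the possibility that the $w_n$ are not distinct or that $r = 0$; if $r=0$ then $K^m = X$ is discrete and $K$ carries the discrete topology too, but a non-locally-finite complex on a discrete vertex set still fails — actually for $r = 0$ every simplex has diameter $0$ hence is a vertex, so $K$ is automatically locally finite and the hypothesis is vacuous, so we may assume $r > 0$ and (passing to a subsequence) that the $w_n$ are distinct.

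I would then close by noting these two directions together give the stated equivalence.
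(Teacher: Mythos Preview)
Your proposal matches the paper's proof in both directions. The ``only if'' construction is identical---the paper also uses the sequence $(1-\tfrac1n)\delta_v + \tfrac1n\delta_{w_n}$ along infinitely many edges at $v$ and argues that this set together with $\delta_v$ is not compact in the coherent topology on $K$ because it is not contained in finitely many simplices. For the ``if'' direction the paper likewise reduces to a finite subcomplex and invokes Proposition~\ref{prop:homeo}; the only difference is that the paper's finite vertex set is the metric two-$r$-neighbourhood $Y=\{x:\exists\,i,\,x'\text{ with }d(x_i,x')\le r,\ d(x',x)\le r\}$, which yields the uniform bound $d(\mu,\mu')>r$ whenever $\mathrm{supp}(\mu')\not\subseteq Y$ and so sidesteps your appeal to a positive distance from $x_0$ to its nearest non-adjacent vertex.
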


\begin{proof}
Suppose $K$ is locally finite. Let $\{\mu_j\}$ be a sequence converging to $\mu=\sum_{i=0}^k \lambda_i x_i$ in $K^m$. Define 
\[ Y=\{x\in X~|~\mbox{there exists }i\mbox{ and }x'\in X\mbox{ with }d(x_i,x'),d(x',x)\le r\}. \]
Note that $Y$ is finite since local finiteness of $K$ implies there are only a finite number points $x'$ within distance $r$ from any $x_i$, and also only a finite number of points $x$ within distance $r$ from any $x'$. There exists some $J$ such that $j\ge J$ implies $\mu_j\in K[Y]^m$, since for any $\mu'\notin K[Y]^m$ we have $d_{K^m}(\mu',\mu)>r$. Since $\{\mu_j\}_{j\ge J}$ converges to $\mu$ in $K[Y]^m$, Property~\ref{prop:homeo} implies that $\{i(\mu_j)\}_{j\ge J}$ converges to $i(\mu)$ in $|K[Y]|$, and hence also in $K$.

For the reverse direction, suppose $K$ is not locally finite. Then some point $x\in X$ is contained in an infinite number of simplices, and hence in an infinite number of edges. Let $x_1, x_2, x_3, \ldots$ be a sequence of distinct vertices in $X$ such that each $[x,x_i]$ is an edge of $K$. The sequence $(1-\frac{1}{i})\delta_x+\frac{1}{i}\delta_{u_i}$ converges to $\delta_x$ in $K^m$. However, this sequence does not converge to $x$ in the geometric realization of $K$. Indeed, any convergent sequence together with its limit point is compact, but the set $\{x\}\cup\{(1-\frac{1}{i})x+\frac{1}{i}{x_i}~|~i\ge 1\}$ is not compact in the geometric realization of $K$ because it is not contained in a finite union of simplices.
\end{proof}

Combining Property~\ref{prop:continuous} and Property~\ref{prop:locally-finite} gives the following.

\begin{corollary}\label{cor:homeo}
If $K^m$ is a metric $r$-thickening and $K$ is locally finite, then the function $K\to K^m$ is a homeomorphism.
\end{corollary}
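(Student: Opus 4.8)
Corollary \ref{cor:homeo} follows almost immediately by combining the two preceding propositions, so the plan is short. The statement to prove is that if $K^m$ is a metric $r$-thickening and $K$ is locally finite, then the natural bijection $K \to K^m$ (sending $\sum \lambda_i x_i$ to $\sum \lambda_i \delta_{x_i}$) is a homeomorphism. Recall that a continuous bijection with continuous inverse is a homeomorphism, so it suffices to show that both the map $K \to K^m$ and its inverse $K^m \to K$ are continuous.

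\begin{proof}
By Property~\ref{prop:continuous}, the function $K\to K^m$ is continuous for any metric thickening $K^m$. Since $K$ is locally finite, Property~\ref{prop:locally-finite} gives that the inverse function $K^m\to K$ is also continuous. As the underlying map $K \to K^m$ is a bijection, and both it and its inverse are continuous, it is a homeomorphism.
\end{proof}

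The only thing worth flagging is that there is nothing to do beyond quoting Property~\ref{prop:continuous} and Property~\ref{prop:locally-finite}: the first supplies continuity of $K\to K^m$ with no hypothesis other than that $K^m$ is a metric thickening, and the second, under the locally finite hypothesis (which is in force here), supplies continuity of the inverse. There is no subtle obstacle; the content of the corollary is entirely carried by the two propositions it cites, and the proof is merely the observation that a continuous bijection whose inverse is continuous is a homeomorphism.
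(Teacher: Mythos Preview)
Your proof is correct and matches the paper's approach exactly: the paper simply states that the corollary follows by combining Property~\ref{prop:continuous} and Property~\ref{prop:locally-finite}, which is precisely what you do.
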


\begin{remark}\label{rem:nerve}
We now develop machinery which will allow us to understand the homotopy type of $K^m$ using the nerve lemma. The argument is similar to that used in~\cite[Theorem~5.2]{AFV} to prove a version of Hausmann's theorem for simplicial complexes embedded in Euclidean space.

Let~$X$ be an arbitrary metric space and let $K$ be a simplicial complex on vertex set $X$. For $\sigma\subseteq X$, denote by 
\[ \st(\sigma) = \Bigl\{\sum_{x \in \tau} \lambda_x x \in K^m~|~\sigma\subseteq\tau,\ \tau \in K,\ \lambda_x>0,\ \sum\lambda_i=1 \Bigr\} \]
the \emph{star} of~$\sigma$. We will consider the covering of $K^m$ by the sets $\nU=\{\st(x)~|~x \in X\}$. For any finite $\sigma \subseteq X$ we have that $\bigcap_{x \in \sigma} \st(x) = \st(\sigma)$. Note $\st(\sigma) = \emptyset$ if and only if $\sigma \notin K$; this implies that the nerve of $\nU$ is isomorphic to~$K$. If $\st(\sigma)$ is nonempty it is contractible: let $\mu=\sum_{x \in \sigma} \frac{1}{|\sigma|}x$ be the barycenter of~$\sigma$, then $\st(\sigma) \times [0,1] \longrightarrow \st(\sigma)$ defined by 
\[ \Bigl(\sum_{x \in \tau} \lambda_x x, t\Bigr) \mapsto t\mu+(1-t)\sum_{x \in \tau} \lambda_x x \] 
is a deformation retraction to the point~$\mu$. It follows that the sets in $\nU$ and their finite intersections are either empty or contractible. Hence whenever the nerve lemma applies to the covering~$\nU$, we get as a consequence that $K$ is homotopy equivalent to $K^m$. In particular, since every metric space is paracompact~\cite{stone1948paracompactness,ornstein1969new}, the nerve lemma as stated in Corollary~4G.3 of~\cite{Hatcher} applies whenever $\nU$ is a family of open sets. 
\end{remark}

If $X$ is discrete then the sets $\st(x)$ are open, and we obtain:

\begin{proposition}\label{prop:weak-equiv}
If $K^m$ is a metric thickening of a discrete metric space $X$, then $K\simeq K^m$.
\end{proposition}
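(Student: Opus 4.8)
The plan is to invoke the machinery of Remark~\ref{rem:nerve} together with the nerve lemma from Corollary~4G.3 of~\cite{Hatcher}, reducing the entire statement to the single observation that for a discrete metric space~$X$, each star $\st(x)$ is an \emph{open} subset of~$K^m$. First I would recall from Remark~\ref{rem:nerve} that the collection $\nU=\{\st(x)~|~x\in X\}$ covers $K^m$, that the nerve of $\nU$ is isomorphic to $K$, and that every nonempty finite intersection $\bigcap_{x\in\sigma}\st(x)=\st(\sigma)$ is contractible via the straight-line deformation retraction onto the barycenter of~$\sigma$. Since $K^m\subseteq\cP(X)$ is metric and hence paracompact~\cite{stone1948paracompactness,ornstein1969new}, the only hypothesis of the nerve lemma still to be checked is that the $\st(x)$ are open; once that is established, the nerve lemma gives $K\simeq\nU$'s nerve $\simeq K^m$ directly.

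So the crux is: if $X$ is discrete, then $\st(x)$ is open in $K^m$ for every $x\in X$. I would argue by showing the complement $K^m\setminus\st(x)=\{\sum\lambda_iy_i\in K^m~|~x\notin\{y_i\}\text{ or the coefficient of }x\text{ is }0\}$ is closed, equivalently that if $\mu_n\to\mu$ in the Wasserstein metric and every $\mu_n$ assigns mass~$0$ to~$x$, then $\mu$ assigns mass~$0$ to~$x$. Here discreteness enters decisively: because $X$ is discrete, the point $x$ has a positive distance $\varepsilon:=d(x,X\setminus\{x\})>0$ to the rest of~$X$ (after possibly shrinking, $\varepsilon>0$ since $\{x\}$ is open and we may take the infimum distance; if that infimum is $0$ one uses that balls of small radius around $x$ meet $X$ only in $x$ and argues with the test function below on a ball of that radius). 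Then the $1$-Lipschitz function $f(y)=\max\{0,\varepsilon-d(x,y)\}$ satisfies $\int_X f\,d\mu_n=0$ for all~$n$ but $\int_X f\,d\mu=\varepsilon\cdot\mu(\{x\})$, so by the dual formula~\eqref{eq:dual} we get $\varepsilon\cdot\mu(\{x\})=\int f\,d(\mu-\mu_n)\le d(\mu,\mu_n)\to 0$, forcing $\mu(\{x\})=0$. Hence $\st(x)$ is open. (One should handle the $\leq$ versus $<$ conventions uniformly; the star argument is insensitive to the choice since it only uses that $K^m$ is a metric thickening.)

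With openness in hand, the proof concludes in one sentence: the nerve lemma in the form of Corollary~4G.3 of~\cite{Hatcher} applies to the open cover $\nU$ of the paracompact space $K^m$, all of whose nonempty finite intersections are contractible, so $K^m$ is homotopy equivalent to the nerve of $\nU$, which is $K$; therefore $K\simeq K^m$. The main obstacle, such as it is, is purely the openness of the stars, and specifically articulating cleanly why discreteness of $X$ (rather than, say, local finiteness of $K$ as in Property~\ref{prop:locally-finite}) is exactly what is needed: the construction of a $1$-Lipschitz bump function supported near~$x$ that detects the mass at~$x$ requires $x$ to be isolated in~$X$, so that the support of the bump is disjoint from all other vertices. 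Everything else is a direct citation of Remark~\ref{rem:nerve} and of the nerve lemma, so I would keep the write-up short, essentially just supplying the openness argument and then appealing to the already-assembled machinery.
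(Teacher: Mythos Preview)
Your proposal is correct and follows the same overall strategy as the paper: invoke Remark~\ref{rem:nerve} and the nerve lemma, reducing everything to the openness of $\st(x)$ when $X$ is discrete. The only difference is in how you verify openness: the paper argues directly that for $\mu\in\st(x)$ with coefficient $\lambda_x>0$ at~$x$, the ball $B_{K^m}(\mu,\lambda_x\varepsilon)$ lies in $\st(x)$ (since moving $\lambda_x$ worth of mass entirely off~$x$ costs at least $\lambda_x\varepsilon$), whereas you show the complement is sequentially closed via the dual formula~\eqref{eq:dual} and a Lipschitz bump function. Both arguments work; the paper's is marginally more direct, and your parenthetical worry about $\inf_{y\neq x}d(x,y)$ possibly vanishing is unfounded---discreteness of $X$ means exactly that this infimum is positive for each~$x$.
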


\begin{proof}
Fix $x\in X$; we only need to show that $\st(x)$ is open. Since $X$ is discrete, there exists some $\varepsilon(x)>0$ such that $B_X(x,\varepsilon)=\{x\}$. Given $\mu\in\st(x)$, let $\lambda_x>0$ be the coefficient of $x$. Note that any point in the open ball $B_{K^m}(\mu,\lambda_x \varepsilon)$ must contain a positive coefficient for $x$. It follows that $ B_{K^m}(\mu,\lambda_x \varepsilon) \subseteq \st(x)$, and hence $\st(x)$ is open in $K^m$.
\end{proof}

\begin{remark}
For $X=\R$ (which is not discrete), the open star of $0$ is not open in $\vrmleq{\R}{r}$. Indeed, note that the sequence $\frac{1}{2}\delta_{1/i}+\frac{1}{2}\delta_{r+1/i}$ converges to $\frac{1}{2}\delta_0+\frac{1}{2}\delta_r$ in $\vrmleq{\R}{r}$. The limit point of this sequence is in the open star of $0$ even though no point of the sequence is.
\end{remark}

We end this section with three corollaries of Property~\ref{prop:
homeo}. Latschev's Theorem~\cite{Latschev2001} states that if $M$ is a closed Riemannian manifold, then for any $r$ sufficiently small there exists a $\delta$ (depending on $r$) such that if $d_\gh(M,X)<\delta$, then $\vrless{X}{r}\simeq M$. We immediately get an analogue of Latschev's theorem for Vietoris--Rips thickenings when $X$ is finite.

\begin{corollary}[Metric Latschev's theorem]\label{cor:metric-Latschev}
If $M$ is a closed Riemannian manifold, then for any $r$ sufficiently small there exists a $\delta$ such that if \emph{finite} $X$ satisfies $d_\gh(M,X)<\delta$, then $\vrmless{X}{r}\simeq M$.
\end{corollary}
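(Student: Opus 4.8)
The plan is to deduce the Metric Latschev's theorem directly from the classical Latschev's theorem (\cite{Latschev2001}) together with Property~\ref{prop:homeo}. The point is that when $X$ is finite, the Vietoris--Rips thickening $\vrmless{X}{r}$ and the Vietoris--Rips complex $\vrless{X}{r}$ are not merely homotopy equivalent but actually homeomorphic, so no new geometry is needed: all the work has already been done by Latschev, and Property~\ref{prop:homeo} simply transports his conclusion from the simplicial to the metric world.

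Concretely, the steps are as follows. First, fix a closed Riemannian manifold $M$ and apply Latschev's theorem: there exists $r_0>0$ such that for every $r$ with $0<r\le r_0$ one obtains a threshold $\delta=\delta(r)>0$ with the property that any (possibly infinite) metric space $X$ satisfying $d_\gh(M,X)<\delta$ has $\vrless{X}{r}\simeq M$. Second, specialize to finite $X$ with $d_\gh(M,X)<\delta(r)$; then in particular $\vrless{X}{r}\simeq M$. Third, invoke Property~\ref{prop:homeo}: since $X$ is finite, $\vrmless{X}{r}=(\vrless{X}{r})^m$ is homeomorphic to the geometric realization of the simplicial complex $\vrless{X}{r}$, via the natural bijection $\sum\lambda_i x_i\mapsto\sum\lambda_i\delta_{x_i}$. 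Composing the homeomorphism with Latschev's homotopy equivalence yields $\vrmless{X}{r}\simeq\vrless{X}{r}\simeq M$, which is exactly the claimed statement (with ``$r$ sufficiently small'' meaning $r\le r_0$, and $\delta=\delta(r)$).

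Since this is a pure black-box reduction, there is essentially no analytic obstacle; the only thing to be careful about is bookkeeping of the order of quantifiers. Latschev's theorem supplies $r$ first and then $\delta$ depending on $r$, and the corollary is stated in exactly that shape (``for any $r$ sufficiently small there exists a $\delta$''), so the quantifier structure matches verbatim and nothing needs to be reorganized. One should also note explicitly that the $<$ convention is being used throughout (both Latschev's original statement and Property~\ref{prop:homeo} are stated for the natural bijection regardless of convention, but here $\vrless{}{}$ is what appears), so no inequality-convention subtlety arises. Thus the proof is a one-line citation of Latschev combined with Property~\ref{prop:homeo}, and the ``hard part'' — proving reconstruction at all — is entirely outsourced. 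The analogous corollaries for stability (Corollary~\ref{cor:stability}) and the finite-dataset coincidence of persistent homology barcodes (Corollary~\ref{cor:PH}) will follow by the same mechanism, replacing Latschev's theorem by the relevant simplicial result and again using Property~\ref{prop:homeo}.
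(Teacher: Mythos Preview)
Your proposal is correct and follows exactly the same approach as the paper: invoke Latschev's theorem to obtain $\vrless{X}{r}\simeq M$, and then apply Property~\ref{prop:homeo} (finite $X$ implies $\vrmless{X}{r}\cong\vrless{X}{r}$) to transfer this to the thickening. The paper's proof is the same two-line citation, just without the extra commentary on quantifier order and conventions.
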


\begin{proof}
Since $X$ is finite, Property~\ref{prop:homeo} gives that $\vrmless{X}{r}$ and $\vrless{X}{r}$ are homeomorphic. Then we have $\vrless{X}{r}\simeq M$ by Latschev's theorem~\cite{Latschev2001}.
\end{proof}

\begin{conjecture}\label{conj:metric-Latschev}
We conjecture that Corollary~\ref{cor:metric-Latschev} is true even for $X$ infinite.
\end{conjecture}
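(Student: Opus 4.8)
The plan is to reduce the infinite case to the finite case by an approximation argument, using the stability of persistent homology together with a compactness/limiting argument. First I would fix a closed Riemannian manifold $M$, choose $r$ small enough that Latschev's theorem (and the metric Hausmann theorem, Theorem~\ref{thm:metric-Hausmann}) applies, and let $\delta$ be the corresponding Latschev parameter for the finite case from Corollary~\ref{cor:metric-Latschev}. Given an arbitrary (possibly infinite) metric space $X$ with $d_\gh(M,X)<\delta$, the naive hope is to pass to a finite $\varepsilon$-net $X'\subseteq X$; then $d_\gh(M,X')$ is still small, so $\vrmless{X'}{r}\simeq M$, and one would like to conclude $\vrmless{X}{r}\simeq\vrmless{X'}{r}$.

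The key step is to establish that the inclusion-induced map $\vrmless{X'}{r'}\hookrightarrow\vrmless{X}{r}$ (for a slightly smaller scale $r'<r$ chosen so the net is fine enough that every point of $X$ is within $r-r'$ of $X'$) is a homotopy equivalence, or at least induces an isomorphism on all homotopy/homology groups that lets one transport Latschev's conclusion. I would build an explicit homotopy inverse: a point $\mu=\sum\lambda_i x_i\in\vrmless{X}{r}$ should be pushed to a nearby measure supported on $X'$ by replacing each $x_i$ with a chosen nearest net point $p(x_i)\in X'$; this gives a map $\vrmless{X}{r}\to\vrmless{X'}{r}$ provided the net is fine enough (so diameters only grow by a controlled amount), and it is Lipschitz by the argument of Lemma~\ref{lem:map-lipschitz}. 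The composition $X\to X'\hookrightarrow X$ is then within Wasserstein distance $r-r'$ of the identity on $\vrmless{X}{r}$, and a linear homotopy (Lemma~\ref{lem:homotopy-cont}) connects it to the identity once one checks the straight-line path stays inside $\vrmless{X}{r}$ — which holds because each $x_i$ and its replacement $p(x_i)$ are close, so the support of the interpolating measure has diameter $<r$. The reverse composition on $\vrmless{X'}{r}$ is handled the same way, possibly after a further scale adjustment, or by noting it is the identity if $p$ fixes $X'$ pointwise.

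The main obstacle I anticipate is the bookkeeping of scale parameters: Latschev's theorem has its small-scale hypothesis tied to the curvature of $M$, and the net-replacement map perturbs diameters, so one must thread a chain of scales $r_1<r_2<\cdots$ and Gromov--Hausdorff parameters so that at every stage the relevant thickening of a finite net is homotopy equivalent to $M$ while the net map and linear homotopies remain well-defined. A second, more serious subtlety is that a finite $\varepsilon$-net $X'$ of $X$ need not satisfy $d_\gh(M,X')<\delta$ with the \emph{same} $\delta$ — it satisfies $d_\gh(M,X')<\delta+\varepsilon$ — so I would need Latschev's parameter to have been chosen with room to spare, and I would need the net to be simultaneously fine enough (for the homotopy equivalence $\vrmless{X'}{r'}\simeq\vrmless{X}{r}$) and coarse enough to remain finite; since $X$ is only assumed to be a metric space and need not be compact, the existence of a finite net is itself not automatic. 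This is precisely why the statement remains a conjecture: without compactness of $X$ one cannot in general produce the finite approximation, and even with it, controlling the scales through the approximation is delicate. I would therefore present the plan as a reduction that works cleanly when $X$ is totally bounded, and flag the non-totally-bounded case as the genuine gap.
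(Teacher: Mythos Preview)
The paper offers no proof of this statement: it is stated as an open conjecture (Conjecture~\ref{conj:metric-Latschev}) and left unproved. So there is nothing to compare against, and the question is whether your sketch closes the gap. It does not, and the obstruction is not the one you flag.

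Your proposed homotopy inverse sends $\sum_i\lambda_i x_i$ to $\sum_i\lambda_i p(x_i)$, where $p\colon X\to X'$ is a nearest-point retraction onto a finite $\varepsilon$-net. You then invoke Lemma~\ref{lem:map-lipschitz} to conclude this induced map is Lipschitz. But Lemma~\ref{lem:map-lipschitz} has as hypothesis that $p$ itself is $c$-Lipschitz, and a nearest-point projection onto a finite set is never Lipschitz (or even continuous) unless $X$ is already discrete: it is piecewise constant on Voronoi regions and jumps at their boundaries. Concretely, if $x_n\to x$ with $x$ equidistant from two net points, then $\delta_{x_n}\to\delta_x$ in $\vrmless{X}{r}$ while $\tilde p(\delta_{x_n})=\delta_{p(x_n)}$ need not converge. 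So the map you build is not continuous, and neither the homotopy inverse nor the linear homotopy in Lemma~\ref{lem:homotopy-cont} (which requires its input $f$ to be continuous) is available.

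This means the argument breaks already in the totally bounded case, which you present as the ``clean'' case. The genuine difficulty is not producing a finite net; it is that passing from $X$ to a finite subset destroys exactly the continuity that makes the thickening tractable. A repair would have to replace the hard nearest-point map by something continuous---for instance a partition-of-unity assignment $x\mapsto\sum_j\varphi_j(x)\,x'_j$ landing in $\vrmless{X'}{r'}$---but then one must check that such an assignment is Lipschitz from $X$ into $\cP(X')$, that it extends to measures, and that the scales can still be threaded. None of this is routine, which is consistent with the paper leaving the statement as a conjecture.
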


The next two corollaries relate to persistent homology~\cite{EdelsbrunnerHarer,chazal2009proximity}. If $X$ a metric space, then $\vrm{X}{-}$, $\vr{X}{-}$, $\cechm{X}{-}$, and $\cech{X}{-}$ define filtered topological spaces (since $r\le r'$ implies $\vrm{X}{r}\subseteq\vrm{X}{r'}$). We let $\ph_i(\vrm{X}{-})$ denote the persistent homology module of $\vrm{X}{-}$ in homological dimension $i\ge 0$.

\begin{corollary}\label{cor:PH}
Let $X$ be a finite metric space. For any dimension $i\ge 0$ the persistent homology modules $\ph_i(\vrm{X}{-})$ and $\ph_i(\vr{X}{-})$ are isomorphic, and the persistent homology modules $\ph_i(\cechm{X}{-})$ and $\ph_i(\cech{X}{-})$ are isomorphic.
\end{corollary}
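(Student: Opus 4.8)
The plan is to reduce the statement about persistent homology modules to the homeomorphism statements already established. Recall that for a finite metric space $X$ and any scale $r$, Property~\ref{prop:homeo} provides a homeomorphism $h_r\colon\vr{X}{r}\to\vrm{X}{r}$ (and similarly $\cech{X}{r}\to\cechm{X}{r}$), given concretely by $\sum\lambda_ix_i\mapsto\sum\lambda_i\delta_{x_i}$. The key observation is that this homeomorphism is natural in $r$: the map $h_r$ is simply the restriction of $h_{r'}$ whenever $r\le r'$, because the underlying bijection is defined by the same formula on the same set of formal convex combinations, and the inclusions $\vr{X}{r}\hookrightarrow\vr{X}{r'}$ and $\vrm{X}{r}\hookrightarrow\vrm{X}{r'}$ are both just subspace inclusions. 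Hence the $h_r$ assemble into an isomorphism of filtered topological spaces (equivalently, of persistence modules at the level of spaces), and applying the homology functor $H_i(-)$ yields an isomorphism of persistence modules $\ph_i(\vr{X}{-})\cong\ph_i(\vrm{X}{-})$.

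First I would make precise what ``isomorphic persistent homology modules'' means here: a natural isomorphism between the functors $r\mapsto H_i(\vr{X}{r})$ and $r\mapsto H_i(\vrm{X}{r})$ from the poset $(\R_{\ge0},\le)$ (or, in the $<$ convention, the appropriate index poset) to the category of abelian groups (or modules over the chosen coefficient ring). Then I would verify the naturality square: for $r\le r'$, the diagram with vertical maps $h_r$, $h_{r'}$ and horizontal maps the two inclusions commutes on the nose, since $h_{r'}\circ(\text{incl}_{\vr{X}{r}\hookrightarrow\vr{X}{r'}})$ and $(\text{incl}_{\vrm{X}{r}\hookrightarrow\vrm{X}{r'}})\circ h_r$ both send $\sum\lambda_ix_i$ to $\sum\lambda_i\delta_{x_i}$ viewed in $\vrm{X}{r'}$. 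Applying $H_i$ gives the required commuting square of abelian groups, so $\{(h_r)_*\}$ is a natural isomorphism. The $\cech$ case is identical, using that Property~\ref{prop:homeo} applies verbatim to $\cechm{X}{r}=(\cech{X}{r})^m$ (any metric thickening of a finite metric space), and that the \v Cech complexes also form a filtration with subspace inclusions.

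I do not expect any serious obstacle here; the only point requiring a word of care is that Property~\ref{prop:homeo} is stated for a fixed scale, so one must explicitly note the compatibility of the homeomorphisms across scales, but this compatibility is immediate from the explicit formula for $h_r$ and the fact that both filtrations are by honest subspace inclusions rather than by maps that need to be reinterpreted. (If one prefers, one can instead observe directly that $\vr{X}{-}$ and $\vrm{X}{-}$ are \emph{equal} as filtered sets and that Property~\ref{prop:homeo} shows the two filtrations induce the same topology on each level, whence the filtered spaces are isomorphic.) So the proof is a short formal argument:

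\begin{proof}
Since $X$ is finite, Property~\ref{prop:homeo} gives for each $r$ a homeomorphism $h_r\colon\vr{X}{r}\to\vrm{X}{r}$, namely $\sum\lambda_ix_i\mapsto\sum\lambda_i\delta_{x_i}$. For $r\le r'$ both $\vr{X}{r}\subseteq\vr{X}{r'}$ and $\vrm{X}{r}\subseteq\vrm{X}{r'}$ are inclusions of subspaces, and since $h_r$ and $h_{r'}$ are given by the same formula, $h_{r'}$ restricts to $h_r$. Thus the square
\[
\begin{CD}
\vr{X}{r} @>>> \vr{X}{r'}\\
@Vh_rVV @VVh_{r'}V\\
\vrm{X}{r} @>>> \vrm{X}{r'}
\end{CD}
\]
commutes, where the horizontal maps are the inclusions. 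Applying the functor $H_i(-)$ yields a commuting square of abelian groups for every $r\le r'$, so $\{(h_r)_*\colon H_i(\vr{X}{r})\to H_i(\vrm{X}{r})\}$ is a natural isomorphism of persistence modules; that is, $\ph_i(\vr{X}{-})\cong\ph_i(\vrm{X}{-})$. The same argument applied to the metric thickening $\cechm{X}{r}=(\cech{X}{r})^m$, using that Property~\ref{prop:homeo} holds for any metric thickening of a finite metric space and that the \v Cech complexes $\cech{X}{r}$ filter by subspace inclusions, gives $\ph_i(\cechm{X}{-})\cong\ph_i(\cech{X}{-})$.
\end{proof}
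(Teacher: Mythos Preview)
Your proof is correct and follows essentially the same approach as the paper: both draw the commuting square with horizontal inclusions and vertical homeomorphisms $h_r$, $h_{r'}$ coming from Property~\ref{prop:homeo}, and conclude by functoriality of $H_i$. Your write-up is more explicit about why the square commutes (the $h_r$ are all restrictions of a single formula), but the underlying argument is identical.
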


\begin{proof}
Note that if $r\le r'$, then the following diagrams commute.
\begin{center}
\begin{tikzpicture}[description/.style={fill=white,inner sep=2pt}] 
\matrix (m) [matrix of math nodes, row sep=3em, 
column sep=3em, text height=1.5ex, text depth=0.25ex] 
{  
\vr{X}{r} & \vr{X}{r'}\\
\vrm{X}{r} & \vrm{X}{r'}\\
};
\path[->,font=\scriptsize]
(m-1-1) edge node[auto] {$\cong$} (m-2-1)
(m-1-2) edge node[auto] {$\cong$} (m-2-2)
;
\path[right hook->,font=\scriptsize]
(m-1-1) edge node[auto] {$$} (m-1-2)
(m-2-1) edge node[auto] {$$} (m-2-2)
;
\end{tikzpicture}
\quad\quad
\begin{tikzpicture}[description/.style={fill=white,inner sep=2pt}] 
\matrix (m) [matrix of math nodes, row sep=3em, 
column sep=4em, text height=1.5ex, text depth=0.25ex] 
{  
\cech{X}{r} & \cech{X}{r'}\\
\cechm{X}{r} & \cechm{X}{r'}\\
};
\path[->,font=\scriptsize]
(m-1-1) edge node[auto] {$\cong$} (m-2-1)
(m-1-2) edge node[auto] {$\cong$} (m-2-2)
;
\path[right hook->,font=\scriptsize]
(m-1-1) edge node[auto] {$$} (m-1-2)
(m-2-1) edge node[auto] {$$} (m-2-2)
;
\end{tikzpicture}
\end{center}
The result now follows from Property~\ref{prop:homeo}.
\end{proof}

\begin{remark}\label{rem:PH}
When the metric space $X$ is infinite, it is known that the persistent homology modules $\ph_i(\vrmleq{X}{-})$ and $\ph_i(\vrleq{X}{-})$ can differ at the endpoints of intervals. For example, $\ph_3(\vrmleq{S^1}{-})$ consists of the single half-open interval $[\frac{1}{3},\frac{2}{5})$, whereas $\ph_3(\vrleq{S^1}{-})$ consists of the single open interval $(\frac{1}{3},\frac{2}{5})$.
\end{remark}

\begin{conjecture}\label{conj:PH}
We conjecture that for $M$ a Riemannian manifold, the persistent homology modules $\ph_i(\vrm{M}{-})$ and $\ph_i(\vr{M}{-})$ are identical up to replacing closed interval endpoints with open endpoints, or vice-versa.
\end{conjecture}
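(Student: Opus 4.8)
The statement asserts that the persistence modules $\ph_i(\vr{M}{-})$ and $\ph_i(\vrm{M}{-})$ have the same \emph{undecorated} persistence diagram, i.e.\ that the multisets of off-diagonal points $(a,b)$ agree, with the endpoint decorations (open versus closed) allowed to differ. By the structure and isometry theorems for q-tame persistence modules of Chazal--de Silva--Oudot~\cite{ChazalDeSilvaOudot2013,chazal2009proximity}, this is equivalent to the single inequality
\[
d_b\bigl(\mathrm{dgm}_i(\vr{M}{-}),\,\mathrm{dgm}_i(\vrm{M}{-})\bigr)=0 ,
\]
where $d_b$ denotes bottleneck distance on undecorated diagrams. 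The plan is to reduce to the finite case, where Property~\ref{prop:homeo} already gives the result, and then pass to a limit in $d_b$. For this to make sense we assume $M$ is compact (e.g.\ closed), so that $M$ is totally bounded and $\ph_i(\vr{M}{-})$ is q-tame by~\cite{ChazalDeSilvaOudot2013}; the general Riemannian case would be handled by exhausting $M$ by compact geodesically convex pieces, or simply restricted to the totally bounded setting in which Vietoris--Rips persistence is defined.

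\textbf{Finite approximations.} Choose finite $\varepsilon_n$-nets $X_n\subseteq M$ with $\varepsilon_n\to 0$, so $d_\gh(M,X_n)\le\varepsilon_n$. Since $X_n$ is finite, Property~\ref{prop:homeo} gives a homeomorphism $\vr{X_n}{r}\to\vrm{X_n}{r}$ for every $r$, and these are compatible with the inclusion maps as $r$ increases --- this is exactly the commutativity of the squares used in the proof of Corollary~\ref{cor:PH}. Hence $\ph_i(\vr{X_n}{-})\to\ph_i(\vrm{X_n}{-})$ is an isomorphism of persistence modules, and in particular the two modules have \emph{identical} undecorated diagrams, $\mathrm{dgm}_i(\vr{X_n}{-})=\mathrm{dgm}_i(\vrm{X_n}{-})$. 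Note also that these modules are q-tame, again since $X_n$ is finite.

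\textbf{Interleavings and the limit.} The stability theorem for Vietoris--Rips filtrations~\cite[Theorem~5.2]{ChazalDeSilvaOudot2013} gives $d_b\bigl(\mathrm{dgm}_i(\vr{X_n}{-}),\mathrm{dgm}_i(\vr{M}{-})\bigr)\le 2\varepsilon_n$. On the thickening side we invoke the metric-thickening stability estimate applied to $X_n\subseteq M$: this is precisely Conjecture~\ref{conj:stability} (of which Corollary~\ref{cor:stability} is the already-known finite case, which does not apply here because $M$ is infinite). Granting it, $\ph_i(\vrm{X_n}{-})$ and $\ph_i(\vrm{M}{-})$ are $2\varepsilon_n$-interleaved; this also forces $\ph_i(\vrm{M}{-})$ to be q-tame, being $2\varepsilon_n$-interleaved for all $n$ with the q-tame modules $\ph_i(\vrm{X_n}{-})\cong\ph_i(\vr{X_n}{-})$. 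Then, using the triangle inequality for $d_b$ and the equality of the two middle diagrams from the previous step,
\[
d_b\bigl(\mathrm{dgm}_i(\vr{M}{-}),\mathrm{dgm}_i(\vrm{M}{-})\bigr)\le d_b\bigl(\mathrm{dgm}_i(\vr{M}{-}),\mathrm{dgm}_i(\vr{X_n}{-})\bigr)+d_b\bigl(\mathrm{dgm}_i(\vrm{X_n}{-}),\mathrm{dgm}_i(\vrm{M}{-})\bigr)\le 4\varepsilon_n .
\]
Letting $n\to\infty$ yields $d_b=0$, which is the claim.

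\textbf{Main obstacle.} The decisive gap is the metric-thickening stability estimate used above, i.e.\ Conjecture~\ref{conj:stability} itself in the infinite case. A direct proof would mimic~\cite[Theorem~5.2]{ChazalDeSilvaOudot2013}: a Gromov--Hausdorff $\varepsilon$-correspondence between metric spaces $X$ and $Y$ should be turned into an $\varepsilon$-interleaving $\vrm{X}{-}\rightleftarrows\vrm{Y}{-}$ at the level of spaces by pushing measures forward along the correspondence and using Lemma~\ref{lem:map-lipschitz}-type Lipschitz control of the induced maps on the Wasserstein metric, then applying homology. The subtlety is that a correspondence is multivalued, so one must either make measurable selections or work directly with couplings, and one must check continuity of the resulting maps between thickenings --- not automatic in general by Remark~\ref{rem:not-cont}, but unproblematic for the Lipschitz-type maps that arise here. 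A secondary, more technical point is confirming that $\ph_i(\vrm{M}{-})$ is not set-theoretically pathological; the examples of Appendix~\ref{app:Droz}, where thickenings have far smaller homology than the corresponding complexes, suggest that the only discrepancy that can occur is at endpoints, as the conjecture predicts. An alternative route worth exploring, which sidesteps Conjecture~\ref{conj:stability} but seems no easier in general, is to analyze directly the morphism of persistence modules $\ph_i(\vr{M}{-})\to\ph_i(\vrm{M}{-})$ induced by the natural continuous bijections of Property~\ref{prop:continuous} and to show that its kernel and cokernel are ephemeral (supported on the diagonal).
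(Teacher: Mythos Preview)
The statement you are attempting to prove is labeled \emph{Conjecture}~\ref{conj:PH} in the paper, and the paper offers no proof of it; it is stated as open. So there is no ``paper's own proof'' to compare against.

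Your proposal is not a proof either, and to your credit you say so explicitly: the entire argument is conditional on Conjecture~\ref{conj:stability} (stability of persistent homology for Vietoris--Rips thickenings with at least one of $X,Y$ infinite), which is itself open in the paper. What you have written is therefore a reduction of Conjecture~\ref{conj:PH} (in the compact case) to Conjecture~\ref{conj:stability}. As a reduction it is sound: the finite step via Property~\ref{prop:homeo}/Corollary~\ref{cor:PH} is correct, the use of \cite[Theorem~5.2]{ChazalDeSilvaOudot2013} on the simplicial side is correct, and your observation that being $\varepsilon$-interleaved with q-tame modules for all $\varepsilon>0$ forces q-tameness is correct (the structure map $V_s\to V_t$ factors through a finite-rank map in the interleaved module whenever $t-s>2\varepsilon$). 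One caveat: Conjecture~\ref{conj:stability} as stated only asserts a bottleneck bound, not an interleaving, and you need the interleaving to deduce q-tameness of $\ph_i(\vrm{M}{-})$ before you can invoke the isometry theorem; you implicitly assume the stronger interleaving form, which is how one would expect Conjecture~\ref{conj:stability} to be proved anyway.

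It is worth noting that the reduction is close to circular in difficulty. In the other direction, Conjecture~\ref{conj:PH} together with the known simplicial stability theorem would immediately give Conjecture~\ref{conj:stability} restricted to Riemannian manifolds. So the two conjectures are essentially two faces of the same unresolved question --- whether the continuous bijection $\vr{M}{r}\to\vrm{M}{r}$ of Property~\ref{prop:continuous} induces controlled behavior on persistent homology --- and your reduction, while correct, does not bring that question closer to resolution. Your closing suggestion of analyzing directly whether the induced morphism $\ph_i(\vr{M}{-})\to\ph_i(\vrm{M}{-})$ has ephemeral kernel and cokernel is the more promising line, and is closer in spirit to what a genuine proof would require.
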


\begin{corollary}\label{cor:stability}
Let $X$ and $Y$ be finite metric spaces and $i\ge 0$. Then 
\begin{align*}
d_b(\ph_i(\vrm{X}{-},\ph_i(\vrm{Y}{-})) &\le 2d_\gh(X,Y) \\
d_b(\ph_i(\cechm{X}{-},\ph_i(\cechm{Y}{-})) &\le 2d_\gh(X,Y),
\end{align*}
where $d_b$ denotes the bottleneck distance between persistent homology modules.
\end{corollary}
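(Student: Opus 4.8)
The plan is to reduce the stability statement for metric thickenings to the known stability theorem for Vietoris--Rips and \v{C}ech simplicial complexes, exactly in the same spirit as the proof of Corollary~\ref{cor:PH}. The key observation is that, by Property~\ref{prop:homeo}, when $X$ is a finite metric space the natural map $\vr{X}{r}\to\vrm{X}{r}$ is a homeomorphism for every $r\ge0$, and these homeomorphisms commute with the inclusion maps as $r\le r'$ (the two commuting squares drawn in the proof of Corollary~\ref{cor:PH}). Therefore the persistent homology modules $\ph_i(\vrm{X}{-})$ and $\ph_i(\vr{X}{-})$ are isomorphic as persistence modules, and likewise $\ph_i(\cechm{X}{-})\cong\ph_i(\cech{X}{-})$ when $Y$ is finite.

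First I would invoke Corollary~\ref{cor:PH} (or rather the persistence-module isomorphisms established in its proof) to identify $\ph_i(\vrm{X}{-})$ with $\ph_i(\vr{X}{-})$ and $\ph_i(\vrm{Y}{-})$ with $\ph_i(\vr{Y}{-})$. Since the bottleneck distance $d_b$ is an invariant of the isomorphism class of a persistence module, these identifications give
\[ d_b\bigl(\ph_i(\vrm{X}{-}),\ph_i(\vrm{Y}{-})\bigr) = d_b\bigl(\ph_i(\vr{X}{-}),\ph_i(\vr{Y}{-})\bigr), \]
and similarly for the \v{C}ech thickenings. Then I would apply the stability theorem for Vietoris--Rips and \v{C}ech complexes of finite metric spaces, namely \cite[Theorem~5.2]{ChazalDeSilvaOudot2013}, which bounds the right-hand side by $2\,d_\gh(X,Y)$. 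Chaining the two facts yields the desired inequalities.

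There is essentially no serious obstacle here: the content is entirely in Property~\ref{prop:homeo} and in the already-cited stability theorem, and the remaining work is just bookkeeping about persistence modules. The one point that deserves a sentence of care is the assertion that the homeomorphisms $\vr{X}{r}\xrightarrow{\cong}\vrm{X}{r}$ assemble into an isomorphism of persistence modules rather than merely a levelwise isomorphism --- this is exactly the commutativity of the squares in the proof of Corollary~\ref{cor:PH}, which holds because the map $\sum\lambda_ix_i\mapsto\sum\lambda_i\delta_{x_i}$ is natural in the inclusion $X\hookrightarrow X$ and does not depend on the scale $r$. Hence the bottleneck distances agree, and the corollary follows immediately. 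The statement should also note that the same argument shows that the interleaving distances agree, so that one could equivalently cite the interleaving-distance form of the stability theorem; but for the stated bottleneck-distance bound the argument above suffices.
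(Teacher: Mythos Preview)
Your proposal is correct and follows exactly the same approach as the paper's proof, which simply cites Corollary~\ref{cor:PH} and \cite[Theorem~5.2]{ChazalDeSilvaOudot2013}. Your additional remark about why the levelwise homeomorphisms assemble into an isomorphism of persistence modules (via the commuting squares in the proof of Corollary~\ref{cor:PH}) is a helpful clarification but not a departure from the paper's argument.
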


\begin{proof}
This follows from Corollary~\ref{cor:PH} and~\cite[Theorem~5.2]{ChazalDeSilvaOudot2013}.
\end{proof}

\begin{conjecture}\label{conj:stability}
We conjecture that Corollary~\ref{cor:stability} is true even for $X$ and $Y$ infinite.
\end{conjecture}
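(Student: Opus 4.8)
The plan would be to pursue one of two related approaches; in either case one must tacitly assume that $X$ and $Y$ are totally bounded, since otherwise the barcodes (and hence $d_b$) need not be defined.

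\emph{First approach: reduce to the simplicial case.} The bound $d_b(\ph_i(\vr{X}{-}),\ph_i(\vr{Y}{-}))\le 2d_\gh(X,Y)$ is already available for arbitrary totally bounded metric spaces by~\cite[Theorem~5.2]{ChazalDeSilvaOudot2013}, so one would only need a precompact analogue of Conjecture~\ref{conj:PH}: that $\ph_i(\vrm{X}{-})$ and $\ph_i(\vr{X}{-})$ agree up to open/closed endpoints, so that $d_b(\ph_i(\vrm{X}{-}),\ph_i(\vr{X}{-}))=0$. Granting that, the triangle inequality for $d_b$ gives
\[ d_b\bigl(\ph_i(\vrm{X}{-}),\ph_i(\vrm{Y}{-})\bigr)\le d_b\bigl(\ph_i(\vrm{X}{-}),\ph_i(\vr{X}{-})\bigr)+2d_\gh(X,Y)+d_b\bigl(\ph_i(\vr{Y}{-}),\ph_i(\vrm{Y}{-})\bigr)=2d_\gh(X,Y), \]
and the same argument applies to \v Cech thickenings. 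So under this approach Conjecture~\ref{conj:stability} follows from a slight strengthening of Conjecture~\ref{conj:PH}.

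\emph{Second approach: direct interleavings.} Here I would build the interleaving at the level of thickenings, approximating $X$ and $Y$ by finite nets. Fix $\eta>0$ and finite $\eta$-nets $X'\subseteq X$, $Y'\subseteq Y$. Since $X'\hookrightarrow X$ is an isometric embedding, Lemma~\ref{lem:map-lipschitz} provides a $1$-Lipschitz (hence continuous) inclusion $\vrm{X'}{r}\hookrightarrow\vrm{X}{r}$ for each $r$. For the opposite direction, set $\varphi_{x'}(x)=\max\{0,\eta-d(x,x')\}$ for $x'\in X'$; these functions are $1$-Lipschitz, their finite sum is bounded away from $0$, and the normalized coefficients $\rho_{x'}$ are Lipschitz, so $p\colon X\to\vrm{X'}{2\eta}$ given by $p(x)=\sum_{x'}\rho_{x'}(x)\delta_{x'}$ is Lipschitz. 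Using that the $1$-Wasserstein distance is jointly convex, the assignment $\sum_i\lambda_ix_i\mapsto\sum_i\lambda_ip(x_i)$ then defines a Lipschitz map $\tilde p\colon\vrm{X}{r}\to\vrm{X'}{r+2\eta}$, the diameter bound on the target holding because the support of $\tilde p(\mu)$ lies within $\eta$ of the support of $\mu$. Next I would check, exactly as in the proof of Theorem~\ref{thm:metric-Hausmann}, that the composites $\vrm{X'}{r}\hookrightarrow\vrm{X}{r}\xrightarrow{\tilde p}\vrm{X'}{r+2\eta}$ and $\vrm{X}{r}\xrightarrow{\tilde p}\vrm{X'}{r+2\eta}\hookrightarrow\vrm{X}{r+2\eta}$ are homotopic to the relevant filtration inclusions through linear homotopies, which are well-defined (all supports that occur have diameter at most $r+2\eta$), continuous by the argument of Lemma~\ref{lem:homotopy-cont}, and compatible across scales. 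This exhibits a $2\eta$-interleaving of $\ph_i(\vrm{X}{-})$ with $\ph_i(\vrm{X'}{-})$, and likewise for $Y$. Combining this with Corollary~\ref{cor:stability} applied to the finite spaces $X'$, $Y'$, the estimate $d_\gh(X',Y')\le d_\gh(X,Y)+2\eta$, and the triangle inequality for $d_b$ would give $d_b(\ph_i(\vrm{X}{-}),\ph_i(\vrm{Y}{-}))\le 2d_\gh(X,Y)+O(\eta)$, and letting $\eta\to0$ finishes the argument.

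\emph{The main obstacle.} The hard part is that both approaches presuppose that persistent homology of infinite metric thickenings is well-behaved: one needs $\ph_i(\vrm{X}{-})$ to be q-tame whenever $X$ is totally bounded, so that barcodes exist, the isometry theorem relating interleaving distance and bottleneck distance applies, and (in the second approach) the limit $\eta\to0$ is legitimate. Establishing this q-tameness---presumably by transporting the argument of~\cite{ChazalDeSilvaOudot2013} from Vietoris--Rips complexes to Vietoris--Rips thickenings, perhaps via the comparison maps $K\to K^m$ of Section~\ref{sec:maps}---is the technical crux and the reason the statement is at present only conjectural. One should also note that for $X$ not totally bounded the statement itself must be interpreted with care, since $d_b$ may then fail to be defined.
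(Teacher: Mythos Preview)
The statement you are addressing is a \emph{conjecture} in the paper; no proof is offered there, so there is nothing against which to compare your proposal directly. What you have submitted is, appropriately, a research outline rather than a proof, and you are explicit about this.

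Your first approach is openly conditional on (a mild strengthening of) Conjecture~\ref{conj:PH}, so it cannot stand on its own.

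Your second approach, however, is more than a sketch: with routine details filled in it appears to contain the essential ideas for a proof in the totally bounded case. Two comments. First, a small repair: with $\varphi_{x'}(x)=\max\{0,\eta-d(x,x')\}$ and $X'$ only an $\eta$-net, the sum $\sum_{x'}\varphi_{x'}$ need not be bounded away from~$0$, so the normalized coefficients $\rho_{x'}$ need not be Lipschitz; taking $X'$ to be an $(\eta/2)$-net (or widening the cutoff) fixes this at the cost only of constants. Second, and more to the point, the ``main obstacle'' you name---q-tameness of $\ph_i(\vrm{X}{-})$---is in fact a \emph{consequence} of your own construction, not an external hypothesis. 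A persistence module that is $2\eta$-interleaved with a pointwise finite-dimensional one has finite-rank structure maps $M_r\to M_{r'}$ whenever $r'-r>4\eta$; since your argument produces such an interleaving for \emph{every} $\eta>0$, q-tameness follows. The isometry theorem for q-tame modules then converts your interleaving estimate $d_i\le 2d_\gh(X,Y)+O(\eta)$ into the desired bottleneck bound after letting $\eta\to0$. So you have undersold your own argument: rather than reducing the conjecture to a hard lemma, your second approach essentially closes the loop. It would be worth writing it out in full, checking in particular the naturality of $\tilde p$ and of the linear homotopies across scales (both hold because $\tilde p$ is defined independently of~$r$). Your caveat that the non--totally-bounded case requires separate interpretation is well taken.
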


\section{Acknowledgements}

We would like to thank the anonymous referees, whose comments helped us clarify the exposition, expand some results, and simplify the proofs.

\appendix

\section{Metric thickenings and the Gromov--Hausdorff distance}

In this section we study how the Vietoris--Rips thickening behaves with respect to the Gromov--Hausdorff distance between metric spaces. The first result along these lines is Lemma~\ref{lem:thickening}, which implies that the Gromov-Hausdorff distance between $X$ and $\vrm{X}{r}$ is at most $r$.

If $X$ and $Y$ are isometrically embedded in a common metric space $Z$, then we let $d_\h^Z(X,Y)$ denote the Hausdorff distance between $X$ and $Y$ in $Z$.

\begin{lemma}
If $X$ and $Y$ are metric spaces with $d_\gh(X,Y)=\varepsilon$, then
\begin{align*}
d_\gh(\vrm{X}{r},\vrm{Y}{r}) &\le r+\varepsilon \\
d_\gh(\cechm{X}{r},\cechm{Y}{r}) &\le r+\varepsilon
\end{align*}
\end{lemma}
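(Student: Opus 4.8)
The plan is to reduce the statement about thickenings to the known fact that $X$ and $Y$ being Gromov--Hausdorff close means they can be placed together in a common metric space where their Hausdorff distance is close to $\varepsilon$. First I would recall that $d_\gh(X,Y) = \varepsilon$ implies that for every $\eta > 0$ there is a metric space $Z$ containing isometric copies of $X$ and $Y$ with $d_\h^Z(X,Y) \le \varepsilon + \eta$; it is enough to produce a bound of the form $d_\gh(\vrm{X}{r},\vrm{Y}{r}) \le r + \varepsilon + (\text{something}\to 0)$, and take $\eta \to 0$ at the end. So fix such a $Z$. The key observation is that the Wasserstein construction is functorial with respect to isometric embeddings: the isometric inclusions $X \hookrightarrow Z$ and $Y \hookrightarrow Z$ induce isometric embeddings $\vrm{X}{r} \hookrightarrow \vrm{Z}{r}$ and $\vrm{Y}{r} \hookrightarrow \vrm{Z}{r}$ (this follows from the fact that the Wasserstein metric on finitely supported measures depends only on the pairwise distances among the support points, exactly as in the discussion after Definition~\ref{def:vrm} and in Lemma~\ref{lem:map-lipschitz} applied to the $1$-Lipschitz inclusion). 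Thus it suffices to bound $d_\h^{\vrm{Z}{r}}(\vrm{X}{r},\vrm{Y}{r})$.

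The central step is then a transport estimate: given a point $\mu = \sum_i \lambda_i x_i \in \vrm{X}{r}$, I want to exhibit a point $\nu \in \vrm{Y}{r}$ with $d(\mu,\nu)$ small. For each support point $x_i \in X \subseteq Z$, choose $y_i \in Y \subseteq Z$ with $d_Z(x_i,y_i) \le \varepsilon + \eta$ (possible since $d_\h^Z(X,Y)\le \varepsilon+\eta$). The obvious candidate is $\nu = \sum_i \lambda_i y_i$. Using the ``diagonal'' matching $\pi_{i,j} = \lambda_i \delta_{ij}$ we get $d(\mu,\nu) \le \sum_i \lambda_i d_Z(x_i,y_i) \le \varepsilon + \eta$ inside $\vrm{Z}{r}$, and by symmetry every point of $\vrm{Y}{r}$ is within $\varepsilon+\eta$ of $\vrm{X}{r}$. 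The one thing that needs checking here is that $\nu$ is actually a legitimate point of $\vrm{Y}{r}$, i.e.\ that $\{y_0,\ldots,y_k\}$ has diameter at most $r$ (or strictly less than $r$, in the $<$ convention). This is exactly where the $+r$ in the bound comes from, and where I expect the main subtlety: in general $\diam\{y_i\} \le \diam\{x_i\} + 2(\varepsilon+\eta)$, which need not be $\le r$. The fix is to shrink $\mu$ first: instead of matching $y_i$ directly to $x_i$, route $\mu$ through a single vertex. Concretely, $d_{\vrm{Z}{r}}(\mu, x_0) = \sum_i \lambda_i d_Z(x_i,x_0) \le r$ (as in the proof of Lemma~\ref{lem:thickening}), and $d_{\vrm{Z}{r}}(x_0, y_0) \le \varepsilon + \eta$ with $y_0 \in \vrm{Y}{r}$ since a single point always forms a $0$-diameter simplex; hence $d_{\vrm{Z}{r}}(\mu, \vrm{Y}{r}) \le r + \varepsilon + \eta$. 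Symmetrically $d_{\vrm{Z}{r}}(\nu, \vrm{X}{r}) \le r + \varepsilon + \eta$ for $\nu \in \vrm{Y}{r}$, so $d_\h^{\vrm{Z}{r}}(\vrm{X}{r},\vrm{Y}{r}) \le r + \varepsilon + \eta$, giving $d_\gh(\vrm{X}{r},\vrm{Y}{r}) \le r + \varepsilon + \eta$, and letting $\eta \to 0$ finishes the first inequality.

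For the \v Cech statement the argument is identical once one replaces the role of Lemma~\ref{lem:thickening}'s $r$-thickening bound with the corresponding $2r$-thickening bound for $\cechm{\cdot}{r}$: the point $\mu = \sum_i \lambda_i x_i \in \cechm{X}{r}$ has support inside some ball $B_Z(y,r)$ (for $y\in Z$, or within $\eta$ of such by inner regularity), so $d_{\cechm{Z}{r}}(\mu, x_0) \le 2r$, and then one hops from $x_0 \in Z$ to a nearby point of $Y$ as before, which lies in $\cechm{Y}{r}$ since single points are always in the \v Cech thickening. I would remark that the functoriality of the metric \v Cech thickening under isometric embeddings is the same as for Vietoris--Rips, since the \v Cech condition $\bigcap_i B(x_i,r)\ne\emptyset$ is preserved under isometric inclusion into a larger space (the witness point survives). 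The main obstacle throughout is the bookkeeping needed to ensure all intermediate measures genuinely lie in the relevant thickening; the transport estimates themselves are routine given equation~\eqref{eq:dual} and the explicit finite-matching description of the Wasserstein distance.
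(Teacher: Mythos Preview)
Your Vietoris--Rips argument is correct and is essentially the paper's proof: embed $X$ and $Y$ in a common $Z$, pass to $\vrm{Z}{r}$, and use the $r$-thickening estimate $d(\mu,X)\le r$ from Lemma~\ref{lem:thickening} together with $d_\h^Z(X,Y)\le\varepsilon+\eta$ to bound $d(\mu,\vrm{Y}{r})$. The paper phrases this as a single chain $d(\mu,Y)\le d(\mu,X)+d_\h^Z(X,Y)$ rather than routing through explicit points $x_0$ and $y_0$, but the content is identical.

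There is, however, a genuine gap in your \v Cech argument. You invoke the ``$2r$-thickening bound'' and conclude $d_{\cechm{Z}{r}}(\mu,x_0)\le 2r$; carried through, this yields only $d_\gh(\cechm{X}{r},\cechm{Y}{r})\le 2r+\varepsilon$, not the stated $r+\varepsilon$. The fix is already present in your own sentence: you note that the support of $\mu$ lies in a ball $B(y,r)$ around the \v Cech witness~$y$, and by the paper's definition of $\cech{X}{r}$ this witness lies in~$X$ (no inner regularity is needed). Hence $d(\mu,y)=\sum_i\lambda_i d(x_i,y)\le r$, and since $y\in X$ you can hop from $y$ to a point of $Y$ at cost $\le\varepsilon+\eta$. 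In other words, $\cechm{X}{r}$ is in fact an $r$-thickening of $X$ (via the witness, not via a vertex of the simplex), and once you use this the ``identical argument'' goes through with the correct constant.
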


\begin{proof}
Let $\delta>0$. 
Let $Z$ be a metric space containing isometric embeddings of $X$ and $Y$ with $d_\h^Z(X,Y)\le\varepsilon+\delta$. Note that $\vrm{Z}{r}$ contains isometric copies of $\vrm{X}{r}$ and $\vrm{Y}{r}$. Given $\mu\in\vrm{X}{r}$, note by Lemma~\ref{lem:thickening} we have
\begin{align*}
d_{\vrm{Z}{r}}(\mu,\vrm{Y}{r})&\le d_{\vrm{Z}{r}}(\mu,Y)\le d_{\vrm{Z}{r}}(\mu,X)+d_{\vrm{Z}{r}}(X,Y) \\
&=d_{\vrm{X}{r}}(\mu,X)+d_\h^Z(X,Y)\le r+\varepsilon+\delta.
\end{align*}
The same argument works symmetrically for any $\mu\in\vrm{Y}{r}$, giving
\[ d_\h^{\vrm{Z}{r}}(\vrm{X}{r},\vrm{Y}{r})\le r+\varepsilon+\delta. \]
Since such a $Z$ exists for every $\delta>0$, this shows that $d_\gh(\vrm{X}{r},\vrm{Y}{r})\le r+\varepsilon$. An identical argument works for \v Cech complexes.
\end{proof}

\begin{remark}
The dependence of this bound on $r$ cannot be completely removed. Consider $X=\{\frac{-r-\varepsilon}{2},\frac{r+\varepsilon}{2}\}$ and $Y=\{\frac{-r+\varepsilon}{2},\frac{r-\varepsilon}{2}\}$. We have $d_\gh(X,Y)=\varepsilon$. Note we have $\vrm{X}{r}=X$ and $\vrm{Y}{r}=[\frac{-r+\varepsilon}{2},\frac{r-\varepsilon}{2}]$, and hence we have $d_\gh(\vrm{X}{r},\vrm{Y}{r})=\frac{r+\varepsilon}{2}$.
\end{remark}

\begin{lemma}
If $X$ and $Y$ are metric spaces, then
\[d_\gh(\vrm{X}{\infty},\vrm{Y}{\infty})=d_\gh(X,Y).\]
\end{lemma}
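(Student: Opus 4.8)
The plan is to show the two inequalities $d_\gh(\vrm{X}{\infty},\vrm{Y}{\infty})\le d_\gh(X,Y)$ and $d_\gh(X,Y)\le d_\gh(\vrm{X}{\infty},\vrm{Y}{\infty})$ separately. The first inequality is immediate from the previous lemma by taking $r=\infty$ (strictly, by applying the previous lemma's proof with $\vrm{X}{r}$ replaced throughout by $\vrm{X}{\infty}$, where now $d_{\vrm{X}{\infty}}(\mu,X)=0$ since every $\mu\in\vrm{X}{\infty}$ is a convex combination of points in $X$ and hence within distance $0$... wait, that is false). Let me reconsider: I would instead prove $d_\gh(\vrm{X}{\infty},\vrm{Y}{\infty})\le d_\gh(X,Y)$ directly. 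Given $\delta>0$, embed $X$ and $Y$ isometrically in a common $Z$ with $d_\h^Z(X,Y)\le d_\gh(X,Y)+\delta$. Then $\vrm{X}{\infty}$ and $\vrm{Y}{\infty}$ embed isometrically in $\vrm{Z}{\infty}=\cP_{\mathrm{fin}}(Z)$, the finitely-supported measures. For $\mu=\sum_i\lambda_i x_i\in\vrm{X}{\infty}$, choose for each $x_i$ a point $y_i\in Y$ with $d_Z(x_i,y_i)\le d_\gh(X,Y)+\delta$; then $\nu=\sum_i\lambda_i y_i\in\vrm{Y}{\infty}$ satisfies $d(\mu,\nu)\le\sum_i\lambda_i d_Z(x_i,y_i)\le d_\gh(X,Y)+\delta$ by using the ``diagonal-ish'' coupling $\pi=\sum_i\lambda_i\delta_{(x_i,y_i)}$. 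Symmetrically for $\nu\in\vrm{Y}{\infty}$, so $d_\h^{\vrm{Z}{\infty}}(\vrm{X}{\infty},\vrm{Y}{\infty})\le d_\gh(X,Y)+\delta$; letting $\delta\to0$ gives the first inequality.

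For the reverse inequality $d_\gh(X,Y)\le d_\gh(\vrm{X}{\infty},\vrm{Y}{\infty})$, the key observation is that $X$ is recoverable from $\vrm{X}{\infty}$ in a purely metric way: I claim $X$ is isometric to the set of extreme points, or more usefully, to the set of points $\mu\in\vrm{X}{\infty}$ that cannot be written as a nontrivial midpoint, i.e., for which there do not exist $\mu_1\ne\mu_2$ in $\vrm{X}{\infty}$ with $d(\mu_1,\mu_2)=2d(\mu,\mu_1)=2d(\mu,\mu_2)$ — wait, midpoints always exist in the measure space. A cleaner characterization: the Dirac masses $\delta_x$ are exactly the points $\mu$ such that for every $\nu$, there is a point $\nu'$ with $d(\mu,\nu')=d(\mu,\nu)$ and ... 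Actually the robust approach is: a point $\mu\in\cP(X)$ is a Dirac mass if and only if for all $\nu_1,\nu_2\in\cP(X)$, $d(\mu,\frac{1}{2}\nu_1+\frac{1}{2}\nu_2)=\frac{1}{2}d(\mu,\nu_1)+\frac{1}{2}d(\mu,\nu_2)$ implies nothing useful. Let me instead use: $\delta_x$ is characterized as a point $\mu$ such that the function $\nu\mapsto d(\mu,\nu)$ is ``affine-extremal'' — concretely, $\mu=\delta_x$ iff $\mu$ is an extreme point of the convex-like structure, detectable by the property that $\mu$ lies on no geodesic segment between two other points at positive distance whose midpoint is $\mu$... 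Since every metric midpoint of $\delta_{x}$... hmm, in $\cP(X)$, is $\delta_x$ ever a midpoint of $\mu_1,\mu_2$? If $\frac{1}{2}d(\mu_1,\mu_2)=d(\delta_x,\mu_1)=d(\delta_x,\mu_2)$, by the triangle inequality equality case and the Kantorovich–Rubinstein duality, this forces $\mu_1=\mu_2=\delta_x$. So: \textbf{$\delta_x$ is never a nondegenerate midpoint}, whereas any $\mu$ with $|\mathrm{supp}(\mu)|\ge2$, say $\mu=\lambda\delta_a+(1-\lambda)\delta_b+\cdots$, \emph{is} a nondegenerate midpoint (e.g. perturb mass between $a$ and $b$). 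So the set of Dirac masses is metrically intrinsic.

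Given this, the reverse inequality follows: any isometry (or $\eta$-approximate correspondence realizing the Gromov–Hausdorff distance) between $\vrm{X}{\infty}$ and $\vrm{Y}{\infty}$ must carry the metrically-defined subset $\{\delta_x : x\in X\}$ to within controlled Hausdorff distance of $\{\delta_y:y\in Y\}$; more carefully, I would argue via correspondences: given a correspondence $R\subseteq\vrm{X}{\infty}\times\vrm{Y}{\infty}$ with distortion $<2\eta$ for $\eta>d_\gh(\vrm{X}{\infty},\vrm{Y}{\infty})$, show that restricting/adjusting $R$ to the Dirac masses yields a correspondence between $X$ and $Y$ with distortion close to that of $R$, using that a Dirac mass in $X$ pairs under $R$ only with measures close to some Dirac mass in $Y$ (since a measure far from all Diracs would... here is where I need a quantitative version of the extremality: a measure at Wasserstein distance $\ge t$ from every Dirac mass genuinely behaves differently). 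This quantitative step — converting ``Diracs are metrically distinguished'' into an effective correspondence-distortion bound — is the main obstacle, and I would likely finesse it by using the structure of $\vrm{Z}{\infty}$ from the first half: since $d_\gh(\vrm{X}{\infty},\vrm{Y}{\infty})$ is approached by embeddings in a common metric space, and the convex structure of $\cP(Z)$ lets one project any measure back to a nearby Dirac, the distortion transfers. The clean write-up combines: (first inequality) explicit coupling; (second inequality) the isometric invariance of the Dirac subset plus a limiting/correspondence argument identifying $d_\gh(X,Y)\le d_\gh(\vrm{X}{\infty},\vrm{Y}{\infty})$; then the two inequalities together give equality.
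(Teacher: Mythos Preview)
Your proof of $d_\gh(\vrm{X}{\infty},\vrm{Y}{\infty})\le d_\gh(X,Y)$ (once you abandon the attempt to quote the previous lemma verbatim) is correct and is exactly the paper's argument: embed $X,Y$ isometrically in a common $Z$, pass to $\vrm{Z}{\infty}$, and for $\mu=\sum_i\lambda_i x_i$ use the coupling $\sum_i\lambda_i\delta_{(x_i,y_i)}$ to produce a nearby $\sum_i\lambda_i y_i\in\vrm{Y}{\infty}$.

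Your reverse inequality has two genuine gaps. First, the assertion that ``$\delta_x$ is never a nondegenerate midpoint'' is simply false: in $\vrm{\R}{\infty}$ the measure $\delta_0$ is the metric midpoint of $\delta_{-1}$ and $\delta_1$, since $d(\delta_{-1},\delta_1)=2=2d(\delta_0,\delta_{\pm 1})$; more generally, Dirac masses lie on many Wasserstein geodesics between other Diracs. So your proposed intrinsic metric characterization of the Dirac locus does not work as stated. Second, even granting some correct characterization, you yourself flag the ``quantitative step'' of converting it into a distortion bound for correspondences as ``the main obstacle,'' and you do not carry it out; that step is not routine, and the sketch in your final sentence (``the convex structure of $\cP(Z)$ lets one project any measure back to a nearby Dirac'') does not make sense for an \emph{arbitrary} ambient $Z$.

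The paper's route for the reverse inequality is far more elementary and bypasses any intrinsic characterization of Diracs. It uses only that $X\subseteq\vrm{X}{\infty}$ and $Y\subseteq\vrm{Y}{\infty}$ isometrically: any metric space $Z$ containing isometric copies of $\vrm{X}{\infty}$ and $\vrm{Y}{\infty}$ therefore automatically contains isometric copies of $X$ and $Y$. Choosing (after possibly swapping $X$ and $Y$) a point $x\in X$ with $d_Z(x,y)\ge d_\h^Z(X,Y)$ for all $y\in Y$, the paper argues that the corresponding $\delta_x\in\vrm{X}{\infty}$ satisfies $d_Z(\delta_x,\nu)\ge d_\h^Z(X,Y)$ for every $\nu=\sum_j\lambda_j y_j\in\vrm{Y}{\infty}$, whence $d_\h^Z(\vrm{X}{\infty},\vrm{Y}{\infty})\ge d_\h^Z(X,Y)\ge d_\gh(X,Y)$; taking the infimum over $Z$ completes the proof.
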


\begin{proof}
Let $Z$ be a metric space equipped with isometric embeddings of $X$ and $Y$ such that $d_\h^Z(X,Y)\le d_\gh(X,Y)+\varepsilon$. Note that $\vrm{Z}{\infty}$ contains isometric copies of $\vrm{X}{\infty}$ and $\vrm{Y}{\infty}$. If $\sum \lambda_i x_i \in \vrm{X}{\infty}$, then there exist points $y_i \in Y$ with $d_Z(x_i,y_i)\le d_\gh(X,Y)+\varepsilon$, and therefore the distance in $\vrm{Z}{\infty}$ between $\sum \lambda_i x_i$ and $\sum \lambda_i y_i\in\vrm{Y}{\infty}$ is at most $d_\gh(X,Y)+\varepsilon$. It follows that
\[ d_\h^{\vrm{Z}{\infty}}(\vrm{X}{\infty},\vrm{Y}{\infty})\le d_\gh(X,Y)+\varepsilon. \]
Since such a $Z$ exists for every $\varepsilon>0$, this shows $d_\gh(\vrm{X}{\infty},\vrm{Y}{\infty})\le d_\gh(X,Y)$.

For the converse direction, let $Z$ be any metric space equipped with isometric embeddings of $\vrm{X}{\infty}$ and $\vrm{Y}{\infty}$. Note that $Z$ also contains isometric embeddings of $X$ and $Y$. Without loss of generality, there is a point $x\in X$ with $d_Z(x,y)\ge d_\h^Z(X,Y)$ for all $y\in Y$ (perhaps after interchanging $X$ with $Y$). It follows that for the corresponding Dirac delta mass $\delta_x\in\vrm{X}{\infty}$, we have $d_Z(\delta_x,\sum \lambda_i y_i)\ge d_\h^Z(X,Y)$ for all $\sum \lambda_i y_i\in\vrm{Y}{\infty}$. Hence $d_\h^Z(\vrm{X}{\infty},\vrm{Y}{\infty})\ge d_\h^Z(X,Y)\ge d_\gh(X,Y)$. Since this is true for all metric spaces $Z$ equipped with isometric embeddings of $\vrm{X}{\infty}$ and $\vrm{Y}{\infty}$, we obtain $d_\gh(\vrm{X}{\infty},\vrm{Y}{\infty})\ge d_\gh(X,Y)$.
\end{proof}

\section{Crushings}\label{sec:crushings}

Let $X$ be a metric space and let $A\subseteq X$ be equipped with the subspace metric. The goal of this section is to prove that if there is a crushing of bounded speed from $X$ onto $A$, then the inclusion maps $\vrm{A}{r}\hookrightarrow\vrm{X}{r}$ and $\cechm{A}{r}\hookrightarrow\cechm{X}{r}$ are homotopy equivalences. 

Recall from~\cite{Hausmann1995} that a \emph{crushing} from $X$ onto $A$ is a continuous map $F \colon X \times [0,1] \to X$ satisfying
\begin{itemize}
\item[(i)] $F(x,1)=x$, $F(x,0)\in A$, $F(a,t)=a$ if $a\in A$, and
\item[(ii)] $d(F(x,t'),F(y,t'))\le d(F(x,t),F(y,t))$ whenever $t'\le t$.
\end{itemize}
For notational convenience, define $f_t\colon X\to X$ by $f_t(x)=F(x,t)$. We say that a crushing has \emph{bounded speed $c$} if $d(f_t(x),f_{t'}(x))\le c|t-t'|$ for all $x\in X$.

\begin{lemma}\label{lem:crushing}
If there is a crushing of bounded speed from $X$ onto a subset $A \subseteq X$, then the inclusion maps $\vrm{A}{r}\hookrightarrow\vrm{X}{r}$ and $\cechm{A}{r}\hookrightarrow\cechm{X}{r}$ are homotopy equivalences.
\end{lemma}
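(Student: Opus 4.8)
The plan is to mimic the proof of the metric Hausmann's theorem (Theorem~\ref{thm:metric-Hausmann}), using the crushing $F$ in place of the center-of-mass map. First I would promote each time-$t$ map $f_t\colon X\to X$ to a map on thickenings. Property~(ii) of a crushing says $f_t$ is distance-nonincreasing after passing through $f_1=\id$: more precisely, $d(f_t(x),f_t(y))\le d(x,y)$ for all $t$, so $f_t$ is $1$-Lipschitz, and in particular $d(f_t(x),f_t(y))\le r$ whenever $d(x,y)\le r$ (and similarly for the $<$ convention, and for the $2r$ condition defining the \v{C}ech thickening). Hence by the discussion preceding Lemma~\ref{lem:map-lipschitz}, $f_t$ induces a map $\widetilde{f_t}\colon\vrm{X}{r}\to\vrm{X}{r}$ (resp.\ on $\cechm{X}{r}$), and by Lemma~\ref{lem:map-lipschitz} this induced map is $1$-Lipschitz. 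Since $f_0(X)\subseteq A$, the map $\widetilde{f_0}$ factors through $\vrm{A}{r}$, giving a map $g\colon\vrm{X}{r}\to\vrm{A}{r}$ with $g\circ\iota=\id_{\vrm{A}{r}}$ (because $f_0|_A=\id_A$), where $\iota\colon\vrm{A}{r}\hookrightarrow\vrm{X}{r}$ is the inclusion.

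Next I would show $\iota\circ g\simeq\id_{\vrm{X}{r}}$ by building the homotopy directly from $F$ at the level of thickenings: define $\widetilde{F}\colon\vrm{X}{r}\times[0,1]\to\vrm{X}{r}$ by $\widetilde{F}\bigl(\sum\lambda_ix_i,t\bigr)=\sum\lambda_i f_t(x_i)$. This is well-defined by the $1$-Lipschitz property of $f_t$ just noted (the support $\{f_t(x_0),\dots,f_t(x_k)\}$ still has diameter at most $r$, resp.\ the relevant \v{C}ech condition is preserved), and it satisfies $\widetilde{F}(\mu,1)=\mu$ and $\widetilde{F}(\mu,0)=\iota(g(\mu))$. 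The content is continuity of $\widetilde{F}$. Here I would estimate, for a $1$-Lipschitz test function $\varphi\colon X\to\R$ and two points $\mu=\sum\lambda_ix_i$, $\mu'=\sum\lambda'_jx'_j$ with a matching $\pi_{i,j}$ realizing $d(\mu,\mu')$, the quantity
\[
\int_X\varphi\,d\bigl(\widetilde{F}(\mu,t)-\widetilde{F}(\mu',t')\bigr)
=\sum_{i,j}\pi_{i,j}\bigl(\varphi(f_t(x_i))-\varphi(f_{t'}(x'_j))\bigr),
\]
and bound each term by $d(f_t(x_i),f_{t'}(x'_j))\le d(f_t(x_i),f_t(x'_j))+d(f_t(x'_j),f_{t'}(x'_j))\le d(x_i,x'_j)+c|t-t'|$, using Property~(ii) of the crushing for the first summand and the bounded-speed hypothesis for the second. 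Summing over $i,j$ and invoking the Kantorovich--Rubinstein duality~\eqref{eq:dual} gives $d\bigl(\widetilde{F}(\mu,t),\widetilde{F}(\mu',t')\bigr)\le d(\mu,\mu')+c|t-t'|$, so $\widetilde{F}$ is (jointly Lipschitz, hence) continuous. This is precisely the step where the bounded-speed assumption is essential — without it $\widetilde{F}$ need not be continuous in $t$, as in the counterexample of Remark~\ref{rem:not-cont}.

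Finally, I would note that $\iota$ and $g$ are continuous: $g$ is $1$-Lipschitz as above (it is a corestriction of $\widetilde{f_0}$), and $\iota$ is an isometric embedding. Together with $g\circ\iota=\id_{\vrm{A}{r}}$ and the homotopy $\widetilde{F}$ exhibiting $\iota\circ g\simeq\id_{\vrm{X}{r}}$, this shows $\iota\colon\vrm{A}{r}\hookrightarrow\vrm{X}{r}$ is a homotopy equivalence. The \v{C}ech case is identical, replacing the diameter condition with the nonempty-intersection condition (which is likewise preserved by $1$-Lipschitz maps: if $\bigcap_iB(x_i,r)\ni y$ then $\bigcap_iB(f_t(x_i),r)\ni f_t(y)$), and replacing $f_0$'s target likewise. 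The main obstacle is the continuity of $\widetilde{F}$; once the displayed estimate is in hand, everything else is bookkeeping parallel to Theorem~\ref{thm:metric-Hausmann}. I would also remark that an alternative, slightly slicker packaging is to apply Lemma~\ref{lem:homotopy-cont}-style reasoning, but since $\widetilde{F}$ here is not of the form $(1-t)\mu+t f(\mu)$ — it moves each vertex along the crushing rather than along a geodesic to a fixed target — the direct duality estimate above seems cleanest.
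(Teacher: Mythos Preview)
Your proposal is correct and follows essentially the same route as the paper's proof: promote each $f_t$ to a $1$-Lipschitz map $\widetilde{f_t}$ on thickenings via Lemma~\ref{lem:map-lipschitz}, take $g=\widetilde{f_0}$ as the homotopy inverse to~$\iota$, and verify continuity of $\widetilde{F}$ by the estimate $d(f_t(x_i),f_{t'}(x'_j))\le d(x_i,x'_j)+c|t-t'|$. The only cosmetic difference is that you package the final continuity bound via Kantorovich--Rubinstein duality~\eqref{eq:dual}, whereas the paper bounds the Wasserstein distance directly from the primal side by observing that the same matching $\pi_{i,j}$ (pushed forward by $f_t\times f_{t'}$) furnishes a coupling of $\widetilde{F}(\mu,t)$ with $\widetilde{F}(\mu',t')$; both yield the identical bound $d(\mu,\mu')+c|t-t'|$.
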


\begin{proof}
Let $F$ be a crushing from $X$ to $A$ of bounded speed $c$. Note that (ii) implies $d(f_t(x),f_t(y))\le d(x,y)$, and hence each $f_t$ is 1-Lipschitz.

Let $\iota\colon A\hookrightarrow X$ denote the inclusion map. Since $\iota$ and $f_0\colon X \to A$ are 1-Lipschitz, the maps $\tilde{\iota}\colon\vrm{A}{r}\hookrightarrow\vrm{X}{r}$ and $\tilde{f_0}\colon\vrm{X}{r}\to\vrm{A}{r}$ are defined and continuous by Lemma~\ref{lem:map-lipschitz}. We will show that $\tilde{\iota}$ and $\tilde{f_0}$ are homotopy inverses. Since $f_0\circ\iota=\id_A$, it follows that $\tilde{f_0}\circ\tilde{\iota}=\id_{\vrm{A}{r}}$.

We must show $\tilde{\iota}\circ\tilde{f_0}\simeq\id_{\vrm{X}{r}}$. Consider the map $\tilde{F} \colon \vrm{X}{r} \times [0,1] \to \vrm{X}{r}$ defined by $\tilde{F}(\cdot,t)=\tilde{f_t}$, which is well-defined since each $f_t$ is 1-Lipschitz. Let $\varepsilon>0$, and suppose $d(\sum \lambda_i x_i,\sum \lambda'_j x'_j)\le\varepsilon$ and $|t-t'|\le\varepsilon$. Then there is some $\pi_{i,j}\ge0$ with $\sum_j \pi_{i,j}=\lambda_i$, $\sum_i \pi_{i,j}=\lambda'_j$, and $\sum \pi_{i,j}d(x_i,x'_j)\le\varepsilon$. We have
\begin{align*}
&d\bigl(\tilde{f_t}(\sum \lambda_i x_i),\tilde{f_{t'}}(\sum \lambda'_j x'_j)\bigr)\\
=\ &d\bigl(\sum \lambda_if_t(x_i),\sum \lambda'_jf_{t'}(x'_j)\bigr)\\
\le\ &\sum_{i,j} \pi_{i,j}d\bigl(f_t(x_i),f_{t'}(x'_j)\bigr)\\
\le\ &\sum_{i,j} \pi_{i,j}d\bigl(f_t(x_i),f_t(x'_j)\bigr)+\sum_{i,j} \pi_{i,j}d\bigl(f_t(x'_j),f_{t'}(x'_j)\bigr)\quad\mbox{by the triangle inequality}\\
\le\ &\sum_{i,j}\pi_{i,j} d(x_i,x'_j)+c\sum_{i,j}\pi_{i,j}|t-t'| \quad\mbox{since }f_t\mbox{ is 1-Lipschitz and }F\mbox{ has bounded speed }c\\
\le\ &\varepsilon+c|t-t'|\le (1+c)\varepsilon,
\end{align*}
and so $\tilde{F}$ is continuous. Since $\tilde{F}(\cdot,0)=\tilde{\iota}\circ\tilde{f_0}$ and $\tilde{F}(\cdot,1)=\id_{\vrm{X}{r}}$, it follows that $\tilde{\iota}\circ\tilde{f_0}\simeq\id_{\vrm{X}{r}}$.

The proof for the \v Cech case is identical.
\end{proof}

\section{Examples where $H_*(\vrleq{X}{r})$ has infinite rank but $H_*(\vrmleq{X}{r})$ does not}\label{app:Droz}

\begin{example}
Let $X=[0,1]\times\{0,1\}\subseteq\R^2$. In~\cite[Section~5.2.1]{ChazalDeSilvaOudot2013} it is remarked that $\vrleq{X}{1}$ with the standard metric has uncountable 1-dimensional homology.

We now show that $\vrmleq{X}{1}$ with the Wasserstein metric is contractible. Note that a crushing of bounded speed 1 from $X$ to $\{(0,0),(0,1)\}$ is given by $F\colon X\times [0,1]\to X$ defined via
\[F((s,0),t)=(ts,0)\quad\mbox{and}\quad F((s,1),t)=(ts,1).\]
Hence by Lemma~\ref{lem:crushing} we have
\[ \vrmleq{X}{r}\simeq\vrmleq{\{(0,0),(0,1)\}}{r}\simeq \begin{cases}
S^0 &\mbox{if }r<1\\
* &\mbox{if }r\ge1.
\end{cases}\]
\end{example}

\begin{example}
Let $X$ be two non-parallel rectangles, namely
\[X=\{(s,0,z)\in\R^3~|~s\in[0,2],z\in[0,1]\}\cup\{(s,1+\tfrac{1}{2}s,z)\in\R^3~|~s\in[0,2],z\in[0,1]\},\]
equipped with the $\ell^1$ metric. In~\cite[Proposition~5.9]{ChazalDeSilvaOudot2013} it is shown that $\vrleq{X}{r}$ with the standard metric has uncountable 1-dimensional homology for all $r\in[1,2]$.

We now show that $\vrmleq{X}{r}$ with the Wasserstein metric is either homotopy equivalent to $S^0$ or contractible. Note that a crushing of bounded speed 2 from $X$ onto $\{(0,0,0),(0,1,0)\}$ is given by $F\colon X\times [0,1]\to X$ defined via
\[F((s,0,z),t)=(ts,0,tz)\quad\mbox{and}\quad F((s,1+\tfrac{1}{2}s,z),t)=(ts,1+\tfrac{1}{2}ts,tz).\]
Hence by Lemma~\ref{lem:crushing} we have
\[ \vrmleq{X}{r}\simeq\vrmleq{\{(0,0,0),(0,1,0)\}}{r}\simeq \begin{cases}
S^0 &\mbox{if }r<1\\
* &\mbox{if }r\ge1.
\end{cases}\]
\end{example}

\begin{example}
Let $S^1$ be the circle of unit circumference equipped with the path-length metric; this gives $r_1=\frac{1}{3}$. In~\cite{AA-VRS1} it is shown that $\vrleq{S^1}{\frac{1}{3}}\simeq\bigvee^{\mathfrak{c}}S^2$, whereas $\vrmleq{S^1}{\frac{1}{3}}\simeq S^3$ by Theorem~\ref{thm:S^n-critical}.
\end{example}

\section{Vietoris--Rips thickenings are usually not complete}\label{sec:not-compact}

The following results show that the Vietoris--Rips thickening of a complete space need not be complete, and that the Vietoris--Rips thickening of a compact space need not be compact.

Let $X$ be a metric space and $K$ a simplicial complex on vertex set $X$. Recall the \emph{$n$-skeleton} of~$K^m$ is denoted $S_n(K^m) = \{\sum_{i=0}^k \lambda_i x_i \in K^m~|~ k\le n\}$. The sets $S_n(K^m)$ are a countable closed covering of~$K^m$. If the $S_n(K^m)$ have empty interior and hence are nowhere dense, then by Baire's category Theorem $K^m$ cannot be complete.

\begin{theorem}
If $X$ is a metric space without isolated points, then $\vrmless{X}{r}$ is not complete for all~${r > 0}$.
\end{theorem}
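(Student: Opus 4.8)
The plan is to exploit the skeletal filtration $\vrmless{X}{r}=\bigcup_{n\ge0}S_n$, where $S_n=S_n(\vrmless{X}{r})$ is the $n$-skeleton, which (as recorded just above the statement) is a countable closed covering of $\vrmless{X}{r}$. First I would reduce the theorem to the single claim that each $S_n$ has empty interior. Granting that, each $S_n$ is closed with empty interior, hence nowhere dense, so $\vrmless{X}{r}$ is a countable union of closed nowhere dense sets; being nonempty (this is where one needs $X\neq\emptyset$, without which the statement is false), it therefore fails to be a Baire space and in particular cannot be complete. The no-isolated-points hypothesis is used only in the empty-interior step.

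To show $S_n$ has empty interior, I would fix $n\ge0$, a point $\mu=\sum_{i=0}^{k}\lambda_i\delta_{x_i}\in S_n$ written in its unique form with all $\lambda_i>0$ (so $k\le n$), and $\varepsilon>0$, and produce $\nu\in\vrmless{X}{r}$ with $d(\mu,\nu)<\varepsilon$ whose support consists of $n+2$ distinct points, so that $\nu\notin S_n$. The construction splits a small fraction of the mass at $x_0$ onto several new points very near $x_0$. One first observes that since $x_0$ is not isolated, every ball $B(x_0,\eta)$ is infinite: if $B(x_0,\eta)=\{x_0,z_1,\dots,z_m\}$ were finite, then $B(x_0,\eta_0)=\{x_0\}$ for $\eta_0=\min_i d(x_0,z_i)>0$, contradicting non-isolation. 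Let $D$ be the diameter of the support of $\mu$, so $D<r$, choose $\eta>0$ with $\eta<\min\{\varepsilon,\tfrac12(r-D)\}$, put $N=n+1-k\ge1$, and pick distinct points $z_1,\dots,z_N\in B(x_0,\eta)\setminus\{x_0,x_1,\dots,x_k\}$ (possible, as this set is infinite). Define
\[ \nu=\frac{\lambda_0}{N+1}\,\delta_{x_0}+\sum_{j=1}^{N}\frac{\lambda_0}{N+1}\,\delta_{z_j}+\sum_{i=1}^{k}\lambda_i\,\delta_{x_i}. \]
Then $\nu$ is a probability measure with exactly $1+N+k=n+2$ atoms, all of positive mass; every pairwise distance among its atoms is at most $D+2\eta<r$, so $\nu\in\vrmless{X}{r}\setminus S_n$. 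Moving mass $\tfrac{\lambda_0}{N+1}$ from $x_0$ to each $z_j$ and leaving the rest fixed is a coupling of $\mu$ and $\nu$ of total cost less than $\tfrac{N\lambda_0}{N+1}\eta\le\eta<\varepsilon$, so $d(\mu,\nu)<\varepsilon$. Hence no $\varepsilon$-ball about $\mu$ lies in $S_n$, proving $S_n$ has empty interior.

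I do not expect a genuine obstacle; the one point that warrants care is the closedness of $S_n$ in $\vrmless{X}{r}$ that is invoked in the first paragraph, i.e.\ that a Wasserstein limit of finitely supported measures with at most $n+1$ atoms again has at most $n+1$ atoms. If one wants this directly, the argument is that a limit with $n+2$ atoms of positive mass forces every sufficiently close approximant to carry positive mass in each of $n+2$ pairwise disjoint small balls, hence to have at least $n+2$ atoms. With this in place, the Baire category argument applies verbatim for every $r>0$.
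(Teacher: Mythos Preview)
Your proof is correct and follows essentially the same Baire-category strategy as the paper: show each skeleton $S_n$ is closed with empty interior by splitting mass at one vertex onto nearby points, then invoke Baire. Your version is in fact slightly more careful than the paper's in taking $\eta<\tfrac12(r-D)$ (rather than $r-D$) to guarantee the new support has diameter below $r$, and in flagging the implicit hypothesis $X\neq\emptyset$.
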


\begin{proof}
It suffices to show that the $S_n(K^m)$ have empty interior. Consider a point $\mu=\sum_{i=0}^k \lambda_i x_i \in S_n(K^m)$ with $\lambda_i>0$ and let $\varepsilon>0$ be arbitrary. Since $X$ has no isolated points, there exist disjoint points $y_0,\ldots,y_{n+1}\in X$ with $d(x_0,y_j)<\min\{\varepsilon,r-\diam(\{x_0,\ldots,x_k\})\}$ for all $0\le j\le n+1$. Note $\mu'=\sum_{j=0}^{n+1}\frac{\lambda_0}{n+2}y_j+\sum_{i=1}^k \lambda_i x_i$ satisfies $d(\mu',\mu)<\varepsilon$ and $\mu'\in\vrmless{X}{r}\setminus S_n(K^m)$.
\end{proof}

\begin{theorem}
If $X\subseteq \R^n$ is an infinite convex subset, then $\vrmleq{X}{r}$ is not complete for all~${r > 0}$.
\end{theorem}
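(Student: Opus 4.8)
The strategy is the same as in the preceding theorem: exhibit the skeleta $S_n(\vrmleq{X}{r})$ as a countable closed cover whose members are nowhere dense, and invoke the Baire category theorem to conclude that $\vrmleq{X}{r}$ cannot be complete. The only thing to check is that each $S_n(\vrmleq{X}{r})$ has empty interior. Fix $\mu=\sum_{i=0}^k\lambda_ix_i\in S_n(\vrmleq{X}{r})$ with all $\lambda_i>0$, fix $\varepsilon>0$, and aim to produce $\mu'\in\vrmleq{X}{r}\setminus S_n(\vrmleq{X}{r})$ with $d(\mu',\mu)<\varepsilon$.

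First I would use convexity of $X$ to find $n+2$ distinct points $y_0,\dots,y_{n+1}\in X$, all lying within Wasserstein-relevant distance of $x_0$, so that replacing the ``$x_0$ part'' of $\mu$ by a uniform combination of the $y_j$'s perturbs $\mu$ by less than $\varepsilon$. Concretely, since $X\subseteq\R^n$ is convex and infinite it contains a nondegenerate segment through (or near) $x_0$; along that segment pick $y_0,\dots,y_{n+1}$ distinct and with $d(x_0,y_j)<\varepsilon$ for each $j$. Then set
\[
\mu'=\sum_{j=0}^{n+1}\frac{\lambda_0}{n+2}\,y_j+\sum_{i=1}^k\lambda_i x_i .
\]
Transporting each $\tfrac{\lambda_0}{n+2}y_j$ back to $x_0$ and leaving the other masses fixed gives a coupling witnessing $d(\mu',\mu)\le\lambda_0\max_j d(x_0,y_j)<\varepsilon$. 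Since $\mu'$ is supported on at least $n+2$ distinct points (the $y_j$ are distinct from each other, and we may further arrange them to be distinct from $x_1,\dots,x_k$ by a small additional perturbation), we have $\mu'\notin S_n(\vrmleq{X}{r})$.

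The remaining point is to guarantee $\mu'\in\vrmleq{X}{r}$, i.e.\ that the support of $\mu'$ still has diameter at most $r$. This is the step requiring a little care, and it is the main (mild) obstacle: unlike the ``no isolated points'' theorem, where one simply shrinks toward a single vertex, here the original diameter could already equal $r$, so one cannot afford to move points outward. The fix is to choose the $y_j$ not near an arbitrary vertex but \emph{inside} the convex hull of $\{x_0,\dots,x_k\}$ when $k\ge 1$ (e.g.\ along the segment from $x_0$ toward $\tfrac{1}{2}(x_0+x_1)$, or clustered near $\mu$'s barycenter), so that $\conv\{y_0,\dots,y_{n+1},x_1,\dots,x_k\}\subseteq\conv\{x_0,\dots,x_k\}$ and hence $\diam(\operatorname{supp}\mu')\le\diam\{x_0,\dots,x_k\}\le r$; this uses $X\supseteq\conv\{x_0,\dots,x_k\}$. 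If $k=0$, so $\mu=\delta_{x_0}$, then $r>0$ gives slack: pick $y_0,\dots,y_{n+1}$ within distance $\min\{\varepsilon,r/2\}$ of $x_0$, which keeps the diameter below $r$. In all cases $\mu'\in\vrmleq{X}{r}\setminus S_n(\vrmleq{X}{r})$ with $d(\mu',\mu)<\varepsilon$, so $S_n(\vrmleq{X}{r})$ has empty interior; since these sets are closed and cover $\vrmleq{X}{r}$, Baire's theorem forbids completeness.
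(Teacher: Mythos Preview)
Your proof is correct and follows the same Baire-category strategy as the paper: show each skeleton $S_n(\vrmleq{X}{r})$ is closed with empty interior, then conclude. The paper's own proof is a one-line ``mimic the above'' that asserts the existence of $y_0,\dots,y_{n+1}$ with $d(x_0,y_j)<\varepsilon$ and $\diam(\{x_0,\dots,x_k,y_0,\dots,y_{n+1}\})\le\min\{\varepsilon,r\}$---a condition that, read literally, cannot hold once $\diam\{x_0,\dots,x_k\}>\varepsilon$. You have identified exactly the delicate point and resolved it cleanly: by placing the $y_j$ inside $\conv\{x_0,\dots,x_k\}\subseteq X$ (using convexity of $X$) when $k\ge 1$, the support of $\mu'$ lies in that convex hull and hence its diameter does not exceed $\diam\{x_0,\dots,x_k\}\le r$; the case $k=0$ is handled separately with the available slack $r>0$. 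This is presumably what the paper intended, and your version makes the use of convexity explicit.
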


\begin{proof}
We mimic the above proof. By assumption on $X$ there exist disjoint points $y_0,\ldots,y_{n+1}\in X$ with $d(x_0,y_j)<\varepsilon$ and $\diam(\{x_0,\ldots,x_k,y_0,\ldots,y_{n+1}\})\le \min\{\varepsilon,r\}$.
\end{proof}

\bibliographystyle{plain}
\bibliography{MetricReconstructionViaOptimalTransport}

\begin{thebibliography}{10}

\bibitem{AA-VRS1}
Micha{\l} Adamaszek and Henry Adams.
\newblock The {V}ietoris--{R}ips complexes of a circle.
\newblock {\em Pacific Journal of Mathematics}, 290:1--40, 2017.

\bibitem{AFV}
Micha{\l} Adamaszek, Florian Frick, and Adrien Vakili.
\newblock On homotopy types of {E}uclidean {R}ips complexes.
\newblock {\em Discrete \& Computational Geometry}, 58:526--542, 2017.

\bibitem{AM}
Henry Adams and Joshua Mirth.
\newblock {V}ietoris--{R}ips thickenings of {E}uclidean submanifolds.
\newblock Preprint, available at
  \href{https://arxiv.org/abs/1709.02492}{\texttt{arxiv/1709.02492}}, 2017.

\bibitem{AttaliLieutier}
Dominique Attali and Andr{\'e} Lieutier.
\newblock Geometry-driven collapses for converting a {{\v C}}ech complex into a
  triangulation of a nicely triangulable shape.
\newblock {\em Discrete \& Computational Geometry}, 54(4):798--825, 2015.

\bibitem{bridson2011metric}
Martin~R Bridson and Andr{\'e} Haefliger.
\newblock {\em Metric spaces of non-positive curvature}, volume 319.
\newblock Springer Science \& Business Media, 2011.

\bibitem{brown2006topology}
Ronald Brown.
\newblock {\em Topology and Groupoids}.
\newblock \url{http://www.groupoids.org.uk}, 2006.

\bibitem{BuragoBuragoIvanov}
Dmitri Burago, Yuri Burago, and Sergei Ivanov.
\newblock {\em A course in metric geometry}, volume~33.
\newblock American Mathematical Society, Providence, 2001.

\bibitem{Carlsson2009}
Gunnar Carlsson.
\newblock Topology and data.
\newblock {\em Bulletin of the American Mathematical Society}, 46(2):255--308,
  2009.

\bibitem{carlsson2008local}
Gunnar Carlsson, Tigran Ishkhanov, Vin de~Silva, and Afra Zomorodian.
\newblock On the local behavior of spaces of natural images.
\newblock {\em International Journal of Computer Vision}, 76(1):1--12, 2008.

\bibitem{chazal2009proximity}
Fr{\'e}d{\'e}ric Chazal, David Cohen-Steiner, Marc Glisse, Leonidas~J Guibas,
  and Steve~Y Oudot.
\newblock Proximity of persistence modules and their diagrams.
\newblock In {\em Proceedings of the 25th Annual Symposium on Computational
  Geometry}, pages 237--246, 2009.

\bibitem{ChazalDeSilvaOudot2013}
Fr{\'e}d{\'e}ric Chazal, Vin de~Silva, and Steve Oudot.
\newblock Persistence stability for geometric complexes.
\newblock {\em Geometriae Dedicata}, pages 1--22, 2013.

\bibitem{ChazalOudot2008}
Fr{\'e}d{\'e}ric Chazal and Steve Oudot.
\newblock Towards persistence-based reconstruction in {E}uclidean spaces.
\newblock In {\em Proceedings of the 24th Annual Symposium on Computational
  Geometry}, pages 232--241, 2008.

\bibitem{DeSilvaCarlsson}
Vin de~Silva and Gunnar~E Carlsson.
\newblock Topological estimation using witness complexes.
\newblock {\em Eurographics Symposium on Point-Based Graphics}, 4:157--166,
  2004.

\bibitem{de2007coverage}
Vin de~Silva and Robert Ghrist.
\newblock Coverage in sensor networks via persistent homology.
\newblock {\em Algebraic \& Geometric Topology}, 7(1):339--358, 2007.

\bibitem{dowker1952topology}
Clifford~H Dowker.
\newblock Topology of metric complexes.
\newblock {\em American Journal of Mathematics}, 74(3):555--577, 1952.

\bibitem{EdelsbrunnerHarer}
Herbert Edelsbrunner and John~L Harer.
\newblock {\em Computational Topology: An Introduction}.
\newblock American Mathematical Society, Providence, 2010.

\bibitem{edwards2011kantorovich}
David~A Edwards.
\newblock On the {K}antorovich--{R}ubinstein theorem.
\newblock {\em Expositiones Mathematicae}, 29(4):387--398, 2011.

\bibitem{ghrist2008barcodes}
Robert Ghrist.
\newblock Barcodes: {T}he persistent topology of data.
\newblock {\em Bulletin of the American Mathematical Society}, 45(1):61--75,
  2008.

\bibitem{Gromov}
Mikhail Gromov.
\newblock Geometric group theory, volume 2: Asymptotic invariants of infinite
  groups.
\newblock {\em London Mathematical Society Lecture Notes}, 182:1--295, 1993.

\bibitem{Hatcher}
Allen Hatcher.
\newblock {\em Algebraic Topology}.
\newblock Cambridge University Press, Cambridge, 2002.

\bibitem{Hausmann1995}
Jean-Claude Hausmann.
\newblock On the {V}ietoris--{R}ips complexes and a cohomology theory for
  metric spaces.
\newblock {\em Annals of Mathematics Studies}, 138:175--188, 1995.

\bibitem{Karcher1977}
Hermann Karcher.
\newblock Riemannian center of mass and mollifier smoothing.
\newblock {\em Communications on pure and applied mathematics}, 30(5):509--541,
  1977.

\bibitem{kellerer1984duality}
Hans~G Kellerer.
\newblock Duality theorems for marginal problems.
\newblock {\em Zeitschrift f{\"u}r Wahrscheinlichkeitstheorie und verwandte
  Gebiete}, 67(4):399--432, 1984.

\bibitem{kellerer1982duality}
Hans~G Kellerer.
\newblock Duality theorems and probability metrics.
\newblock In {\em Proceedings of the Seventh Conference on Probability theory,
  Bra{\c{s}}ov, Romania}, pages 211--220, 1985.

\bibitem{Latschev2001}
Janko Latschev.
\newblock Vietoris--{R}ips complexes of metric spaces near a closed
  {R}iemannian manifold.
\newblock {\em Archiv der Mathematik}, 77(6):522--528, 2001.

\bibitem{lefschetz1942algebraic}
Solomon Lefschetz.
\newblock {\em Algebraic topology}, volume~27.
\newblock American Mathematical Society, 1942.

\bibitem{lovasz1983self}
L{\'a}sl{\'o} Lov{\'a}sz.
\newblock Self-dual polytopes and the chromatic number of distance graphs on
  the sphere.
\newblock {\em Acta Scientiarum Mathematicarum}, 45(1-4):317--323, 1983.

\bibitem{macpherson2012measuring}
Robert MacPherson and Benjamin Schweinhart.
\newblock Measuring shape with topology.
\newblock {\em Journal of Mathematical Physics}, 53(7):073516, 2012.

\bibitem{marin2017measure}
Ivan Marin.
\newblock Measure theory and classifying spaces.
\newblock Preprint,
  \href{http://arxiv.org/abs/1702.01889}{\texttt{arxiv/1702.01889}}, 2017.

\bibitem{marin}
Ivan Marin.
\newblock Simplicial random variables.
\newblock Preprint,
  \href{http://arxiv.org/abs/1703.03987}{\texttt{arxiv/1703.03987}}, 2017.

\bibitem{martin2010topology}
Shawn Martin, Aidan Thompson, Evangelos~A Coutsias, and Jean-Paul Watson.
\newblock Topology of cyclo-octane energy landscape.
\newblock {\em The journal of chemical physics}, 132(23):234115, 2010.

\bibitem{niyogi2008finding}
Partha Niyogi, Stephen Smale, and Shmuel Weinberger.
\newblock Finding the homology of submanifolds with high confidence from random
  samples.
\newblock {\em Discrete \& Computational Geometry}, 39(1):419--441, 2008.

\bibitem{ornstein1969new}
Donald Ornstein.
\newblock A new proof of the paracompactness of metric spaces.
\newblock {\em Proceedings of the American Mathematical Society},
  21(2):341--342, 1969.

\bibitem{sakai2013geometric}
Katsuro Sakai.
\newblock {\em Geometric aspects of general topology}.
\newblock Springer, 2013.

\bibitem{santambrogio2015optimal}
Filippo Santambrogio.
\newblock Optimal transport for applied mathematicians.
\newblock {\em Birk{\"a}user, NY}, pages 99--102, 2015.

\bibitem{stone1948paracompactness}
Arthur~H Stone.
\newblock Paracompactness and product spaces.
\newblock {\em Bulletin of the American Mathematical Society}, 54(10):977--982,
  1948.

\bibitem{sturm2003probability}
Karl-Theodor Sturm.
\newblock Probability measures on metric spaces of nonpositive.
\newblock {\em Contemporary Mathematics: Heat Kernels and Analysis on
  Manifolds, Graphs, and Metric Spaces}, 338:357, 2003.

\bibitem{tom2008algebraic}
Tammo tom Dieck.
\newblock {\em Algebraic topology}.
\newblock European Mathematical Society, 2008.

\bibitem{tuffley2002finite}
Christopher Tuffley.
\newblock Finite subset spaces of ${S}^1$.
\newblock {\em Algebraic \& Geometric Topology}, 2(2):1119--1145, 2002.

\bibitem{vershik2013long}
Anatoly~Moiseevich Vershik.
\newblock {Long history of the Monge--Kantorovich transportation problem}.
\newblock {\em The Mathematical Intelligencer}, 35(4):1--9, 2013.

\bibitem{Vietoris27}
Leopold Vietoris.
\newblock {{\"U}ber den h{\"o}heren Zusammenhang kompakter R{\"a}ume und eine
  Klasse von zusammenhangstreuen Abbildungen}.
\newblock {\em Mathematische Annalen}, 97(1):454--472, 1927.

\bibitem{villani2008optimal}
C{\'e}dric Villani.
\newblock {\em Optimal transport: Old and new}, volume 338.
\newblock Springer, 2008.

\bibitem{zomorodian2012topological}
Afra Zomorodian.
\newblock Topological data analysis.
\newblock {\em Advances in applied and computational topology}, 70:1--39, 2012.

\end{thebibliography}
\end{document}